\tikzstyle{filled}=[fill=black, draw=black, shape=circle, scale=0.3]
\tikzstyle{blue}=[-, draw=blue]
\newtheorem{mainthm}{Theorem}
\newtheorem{maincor}[mainthm]{Corollary}
\newtheorem{thm}{Theorem}
\numberwithin{thm}{subsection}
\numberwithin{equation}{section}
\newtheorem{lem}[thm]{Lemma}
\newtheorem{rmk}[thm]{Remark}
\newtheorem{cor}[thm]{Corollary}
\theoremstyle{definition}
\newcommand{\mult}{\mathrm{mult}}
\newcommand{\C}{\mathbb{C}}
\newcommand{\Ct}{\mathbb{C}{(\!(\!\hspace{0.7pt}t\hspace{0.7pt}\!)\!)}}
\newcommand{\D}{\mathbb{D}}
\newcommand{\Q}{\mathbb{Q}}
\newcommand{\X}{\mathscr{X}}
\renewcommand{\P}{\mathbb{P}}
\renewcommand{\O}{\mathcal{O}}
\newcommand{\an}{\mathrm{an}}
\newcommand{\R}{\mathbb{R}}
\newcommand{\hyb}{\mathrm{hyb}}
\newcommand{\N}{\mathbb{N}}
\newcommand{\zbar}{\overline{z}}
\renewcommand{\L}{\mathcal{L}}
\newcommand{\Sk}{\mathrm{Sk}}
\renewcommand{\div}{\mathrm{div}}
\newcommand{\red}{\mathrm{red}}
\newcommand{\Y}{\mathscr{Y}}
\renewcommand{\tilde}{\widetilde}
\newcommand{\val}{\mathrm{val}}
\newcommand{\F}{\mathcal{F}}
\newcommand{\res}{\mathrm{res}}
\newcommand{\CC}{\mathrm{CC}}
\newcommand{\ttheta}{\tilde{\theta}}
\newcommand{\tpsi}{\tilde{\psi}}
\newcommand{\B}{\overline{B}}
\newcommand{\wt}{\mathrm{wt}}
\newcommand{\Mgbar}{\overline{\mathcal{M}_g}}
\newcommand{\Cgbar}{\overline{\mathcal{C}_g}}
\newcommand{\Aut}{\mathrm{Aut}}
\title[Convergence of Narasimhan--Simha measures]{Convergence of Narasimhan--Simha measures on degenerating families of Riemann surfaces}
\author{Sanal Shivaprasad}
\date{\today}
\begin{document}
\maketitle

\begin{abstract}
Given a compact Riemann surface $Y$ and a positive integer $m$, \break Narasimhan and Simha defined a measure on $Y$ associated to the $m$-th tensor power of the canonical line bundle. 
  We study the limit of this measure on holomorphic families of Riemann surfaces with semistable reduction.
  The convergence takes place on a hybrid space whose central fiber is the associated metrized curve complex in the sense of Amini and Baker.
  We also study the limit of the measure induced by the Hermitian pairing defined by Narasimhan--Simha measure. 
  For $m = 1$, both these measures coincide with the Bergman measure on $Y$.
  We also extend the definition of the Narasimhan--Simha measure to the singular curves on the  boundary of $\Mgbar$ in such a way that these measures form a continuous family of measures on the universal curve over $\Mgbar$.
\end{abstract}

\section{Introduction}
Let $Y$ be a compact Riemann surface of genus $g \geq 1$ and $m$ a fixed positive integer. Let $\Omega_{Y}^{\otimes m}$ denote the $m$-th tensor power of the canonical line bundle on $Y$.
Using the global sections of $\Omega_{Y}^{\otimes m}$, Narasimhan and Simha defined a volume form $\tau^{(m)}$ on $Y$ as follows \cite{NS68}. Given $\theta \in H^0(Y,\Omega_{Y}^{\otimes m})$, let $|\theta|^{2/m}$ denote the associated volume form i.e.~if locally $\theta(z) = f(z)dz^{\otimes m}$, then $|\theta|^{2/m}(z) = |f(z)|^{2/m}(\frac{i}{2} dz \wedge d \overline{z})$. Then,
$$ \tau^{(m)}(z) := \max_{\{ \theta \mid \int_{Y} |\theta|^{2/m} = 1\}} |\theta|^{2/m}(z) $$
is a continuous positive volume form on $Y$ that we call the \emph{Narasimhan--Simha volume form} associated to the line bundle $\Omega_{Y}^{\otimes m}$. The induced Radon measure on $Y$ is called the \emph{Narasimhan--Simha measure} and is also denoted by $\tau^{(m)}$.

More generally, given points $P_1,\dots,P_r \in Y$ and integers $0 < b_1,\dots,b_r < m$, a similar construction yields the Narasimhan--Simha measure $\tau^{(m,b_1 P_1 +\dots+b_r P_r)}$ on $Y$ associated to the line bundle $\L = \Omega_Y^{\otimes m}(b_1P_1 + \dots + b_rP_r)$. For details, see Section \ref{subsecPluriBergman}.

When $m=1$, the Narasimhan--Simha measure associated to $\Omega_{Y}$, $\tau^{(1)}$, coincides with the \emph{Bergman measure} \cite[Section 4]{Ber10} on $Y$ (See Section  \ref{eqnGenBergmanPairing} for details). Thus, $\tau^{(m)}$ is a possible generalization of the Bergman measure using pluricanonical forms. The volume form $\tau^{(m)}$ was introduced by Narasimhan and Simha to construct the moduli space of projective complex structures on a given compact connected real analytic manifold \cite{NS68}. Tsuji, and Berndtsson--Păun studied the semipositivity of the curvature current of the Narasimhan--Simha metric in families \cite{Tsu07} \cite{BP08}.

The asymptotics of the Bergman measure in degenerating families is studied in \cite{HJ96}, \cite{Don15}, \cite{dJon19}, \cite{Shi20A} \cite{AN20}. We are interested in computing the asymptotics of the Narasimhan--Simha measure in degenerating families. 

Let $X \to \D^*$ be a holomorphic family of curves of genus $g$ and $m \geq 2$ be a fixed integer. Let $B = b_1 B_1 + \dots + b_rB_r$ be a horizontal divisor on $X$ for integers $0 < b_1,\dots,b_r < m$. Denote $\L = \Omega_{X/\D^*}^{\otimes m}(B)$. Assume that if $g = 1$, then $r \geq 1$ and if $g = 0$, $r \geq 3$ and $\deg(B|_{X_t}) \geq 2m$. Note that when $g=0$, the assumption $r \geq 3$ and $\deg(B|_{X_t}) \geq 2m$ is equivalent to requiring that $H^0(X_t,\L_{X_t}) \neq 0$. Therefore, effectively, the only case we are excluding is when $g = 1$ and $r = 0$. But in this case, Theorem \ref{mainThmA}, stated below, is already known \cite[Theorem B]{Shi20A} (see also \cite[Corollary 4.8]{CLT10}, \cite[Theorem C]{BJ17} and \cite[Remark 16.4]{dJon19} for related results).

We will also assume that $(X,B_\red)$ has semistable reduction i.e.~there exists a regular model $\X$ of $X$ such that $\X_0$ is reduced and $\X_0 + \overline{B}_\red$ is an snc divisor on $\X$, where $\overline{B}$ denotes the component-wise closure of $B$ in $\X$. A theorem of Deligne and Mumford  guarantees that such a model always exists after a base change $\D^* \to \D^*$ given by $t \mapsto t^N$ for some positive integer $N$ \cite{DM69}. 

Let $\tau^{(m,B)}_t := \tau^{(m,B|_{t})}$ be the Narasimhan--Simha measure on $X_t$ associated to $\L|_{X_t}$.
If $\X$ is an snc model of $X$, then it is natural to ask what the limit of $\tau^{(m,B)}_t$ on $\X$ is. However, instead of computing the limit of $\tau^{(m,B)}_t$ on $\X$, we instead compute the limit of $\tau^{(m,B)}_t$ on the \emph{metrized curve complex hybrid space},  $\X_\CC^\hyb$, defined in \cite{Shi20A}. This has the advantage of simultaneously computing the limit of $\tau^{(m,B)}_t$ in $\X$ as well as in the non-Archimedean hybrid space in the sense of Boucksom and Jonsson \cite{Ber09} \cite{BJ17}.

The space $\X_\CC^\hyb$ is a partial  compactification of $X$ with central fiber being the \emph{metrized curve complex} $\Delta_\CC(\X)$ (in the sense of Amini and Baker \cite{AB15}) associated to $\X_0$. The latter is obtained by replacing all nodal points in $\X_0$ with  line segments i.e.~by taking the normalization $\tilde{\X_0}$, of $\X_0$ and adding a line segment connecting $P',P'' \in \tilde{\X_0}$ if $P',P''$ lie over the same nodal point in $\X_0$. See Figure \ref{figExampleGenus4Family} for an example. We refer to the irreducible components of $\tilde{\X_0}$ as \emph{curves} in $\Delta_\CC(\X)$ and the line segments as \emph{edges} in $\Delta_\CC(\X)$.

\begin{figure}
\begin{subfigure}{0.42\linewidth}
  \includegraphics[scale=0.3]{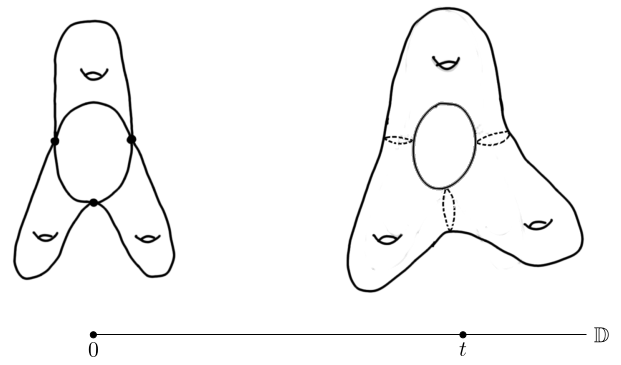}
  \caption{A degenerating family of genus 4 curves}
\end{subfigure}
\hspace{0.1\linewidth}
\begin{subfigure}{0.42\linewidth}
  \includegraphics[scale=0.3]{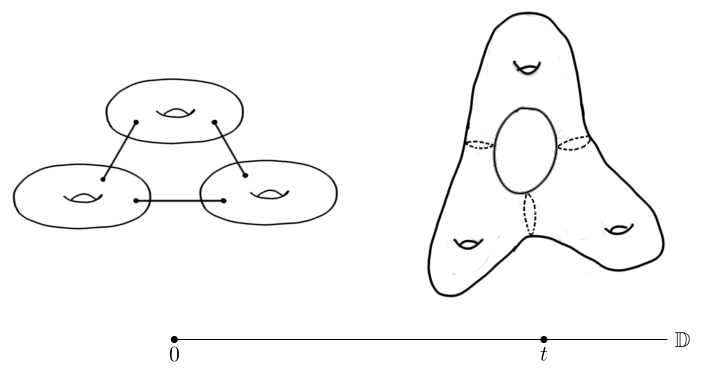}
  \caption{The associated metrized curve complex hybrid space}
\end{subfigure}
\caption{A family of genus 4 curves and the associated curve complex hybrid space}
\label{figExampleGenus4Family}  
\end{figure}

We have the following theorem regarding the convergence of $\tau^{(m,B)}_t$. 
\begin{mainthm}
\label{mainThmA}
There exists a measure $\tau^{(m,B)}_0$ on $\Delta_\CC(\X)$ such that $\tau^{(m,B)}_t \to \tau^{(m,B)}_0$ weakly as measures on $\X^\hyb_\CC$. 
\end{mainthm}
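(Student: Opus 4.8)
The plan is to reduce the convergence statement to an explicit computation of the Narasimhan--Simha volume form in terms of local coordinates on the snc model $\X$, and then to patch these local pictures together over the metrized curve complex hybrid space $\X_\CC^\hyb$. First I would analyze the behavior of the relevant space of global pluricanonical sections $H^0(X_t, \L|_{X_t})$ as $t \to 0$. Using the semistable reduction hypothesis, the relative dualizing sheaf $\omega_{\X/\D}$ together with the divisor $\overline{B}$ gives a line bundle on $\X$ whose restriction to $\X_0$ is controlled by the combinatorics of the dual graph; the key input is that sections of $\Omega_{X_t}^{\otimes m}(B|_t)$ extend to sections of $\omega_{\X/\D}^{\otimes m}(\overline{B})$ (possibly after twisting by components of $\X_0$), so one gets a flat family of vector spaces over $\D$ whose central fiber is identified with pluricanonical-type sections on the nodal curve $\X_0$, i.e.\ sections of a line bundle on $\X_0$ with prescribed orders of vanishing/poles at the nodes. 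This is exactly the data that, on the Amini--Baker metrized curve complex $\Delta_\CC(\X)$, assembles into pluricanonical sections on the curves together with a piecewise-polynomial (here, piecewise-monomial after taking $|\cdot|^{2/m}$) profile along the edges.

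Next I would make the change of variables $z = $ (local coordinate on $\X$) versus the ``tropical'' coordinate $\Log|z| / \log|t|$ that defines the hybrid structure, exactly as in the construction of $\X_\CC^\hyb$ in \cite{Shi20A}. Away from the nodes, on a fixed compact subset of a component of $\X_0$, the family $\tau^{(m,B)}_t$ converges to the Narasimhan--Simha measure of that component (computed with the limiting section space and the appropriate twisting divisor coming from the edges meeting that component), because the $L^{2/m}$-unit-ball condition and the pointwise max defining $\tau^{(m)}$ vary continuously with $t$ once the sections are normalized. Near a node, in coordinates $xy = t^e$, one computes $|\theta|^{2/m}$ for $\theta$ a local generator; the mass concentrates (or spreads) along the vanishing cycle according to the valuations of the section, and under the $\Log$-rescaling this produces, in the limit, a measure on the edge which is (a power of) a linear function in the segment coordinate — the one-dimensional analogue of the Narasimhan--Simha problem on an annulus. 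I would carry out this annulus computation explicitly (it is the genus-one / $r\geq 1$ local model), identify the limiting edge measure, and check it glues with the component contributions to define a single Radon measure $\tau^{(m,B)}_0$ on $\Delta_\CC(\X)$.

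Finally, to upgrade pointwise/compact-subset convergence to weak convergence of measures on the compact hybrid space $\X_\CC^\hyb$, I would establish a uniform (in $t$) mass bound and a no-mass-escaping-to-the-nodes tightness estimate: the total mass $\int_{X_t}\tau^{(m,B)}_t$ is constant (it equals the dimension of $H^0(X_t,\L|_{X_t})$, or more precisely is determined by Riemann--Roch and hence locally constant in $t$), so it suffices to show that for any $\varepsilon$ there is a neighborhood of the node-locus in $\X_\CC^\hyb$ carrying mass $<\varepsilon$ uniformly; this follows from the explicit annulus estimate by choosing the $\Log$-window around each edge appropriately. Combining the interior convergence on each component, the edge convergence from the annulus model, and the tightness at the nodes, a partition-of-unity argument against continuous test functions on $\X_\CC^\hyb$ gives $\tau^{(m,B)}_t \to \tau^{(m,B)}_0$ weakly.

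\textbf{Main obstacle.} The hard part will be the node analysis: correctly identifying which sections of $\L$ on $X_t$ extend over a given node of $\X_0$ and with what order (the combinatorial bookkeeping over the dual graph, especially when several edges meet a component and when the twisting divisor $\overline{B}$ passes through or near nodes), and then proving that the rescaled $|\theta|^{2/m}$ on the annulus $\{|t| < |x| < 1\}$ converges — uniformly in $t$ and over the whole family of competing sections $\theta$ in the unit ball — to the claimed linear-profile measure on the edge. This requires uniform control of the Narasimhan--Simha extremal problem (a sup over an infinite-dimensional-looking but actually finite-dimensional ball of sections) as the geometry degenerates, which is where the real estimates live.
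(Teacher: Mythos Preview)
Your overall strategy matches the paper's: extend sections across $\X_0$ via Grauert, split the analysis into charts adapted to components versus charts adapted to nodes, compute the local asymptotics of $\tau_t$ in each, and patch via a partition of unity on $\X_\CC^\hyb$. The paper formalizes the patching step as a general lemma (Lemma~\ref{lemConvergenceMCCHC}) reducing weak convergence to three local checks, and the hard estimate you flag --- uniform control of the sup defining $\tau_t$ --- is exactly Lemma~\ref{lemNormEstimate}, which computes the asymptotics of $\|c_1\ttheta_{1,t}+\dots+c_M\ttheta_{M,t}\|'_{X_t}$ for \emph{arbitrary} coefficients and thereby identifies the extremizer.

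Two concrete errors in your proposal, however. First, the total mass $\int_{X_t}\tau^{(m,B)}_t$ is \emph{not} equal to $h^0(X_t,\L|_{X_t})$; you are thinking of the pluri-Bergman measure $\mu^{(m,B)}$. The paper says explicitly that the mass of $\tau$ ``does not seem to be easy to calculate.'' Your tightness argument therefore cannot proceed as written; what the paper actually needs (and gets, as a byproduct of Lemma~\ref{lemNormEstimate}) is only that $\limsup_{t\to 0}\tau_t(X_t)<\infty$, which falls out of the full local asymptotics rather than from an a priori formula. Second, the limiting edge measure is not ``a power of a linear function'': it is flat Lebesgue measure, of total mass $1/N$ on each unit edge, where $N$ is the length of the maximal inessential chain containing that edge. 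This comes out of the annulus computation once you normalize the ``residue'' sections $\theta_1,\dots,\theta_s$ by $(2\pi l_{e_j}\log|t|^{-1})^{m/2}$; after that rescaling the dominant term near a node is $|w|^{-2}(\log|t|^{-1})^{-1}\,|dw\wedge d\bar w|$, which pushes forward to Lebesgue under $w\mapsto \log|w|/\log|t|$.
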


The limiting measure $\tau^{(m,B)}_0$ is a sum of Narasimhan--Simha measures on the curves  and Lebesgue measures on the edges. For details, see Theorem \ref{thmA}. The reason for working with the more general Narasimhan-Simha measure $\tau^{(m,B)}$ instead of just $\tau^{(m)}$ is that even if we start with $\tau^{(m)}_t$, the restriction of the limiting measure to a curve $E$ in $\Delta_\CC(\X)$ could still be of the form $\tau^{(m,a_1P_1+\dots+a_sP_s)}$. The mass of an edge $e$ under $\tau_0^{(m,B)}$ is $\frac{1}{N}$, where $N$ is the length of the maximal inessential chain containing $e$ (see Section \ref{subsecDualGraph} for details).

As a corollary of Theorem \ref{mainThmA}, we get that the limit of $\tau^{(m,B)}_t$ on $\X$ is a sum of Narasimhan--Simha measures on certain irreducible components of $\X_0$ and Dirac masses on nodal points of $\X_0$. The Dirac mass at a nodal point is equal to the mass of the associated edge in $\Delta_\CC(\X)$ with respect to the limiting measure $\tau^{(m,B)}_0$.

The Berkovich hybrid space $X^\hyb = X \cup X^\an_\Ct$ is a partial compactification of $X$ with  central fiber being the Berkovich analytification of $X$ viewed as a variety over $\Ct$. This partial compactification has the advantage that it does not depend on the choice of a model $\X$ and is, therefore, canonical. A number of degeneration problems \cite{BJ17}, \cite{Oda17}, \cite{Sus18}, \cite{LS19}, \cite{Sch19}, \cite{Shi19}, \cite{Shi20A} \cite{Li20} and dynamical problems \cite{Fav18}, \cite{DF19}, \cite{DKY19} have been studied in this setting. Here, we compute the limit of $\tau^{(m,B)}_t$ on $X^\hyb$ as a corollary of Theorem \ref{mainThmA}.

\begin{maincor}
\label{mainCorA}
The measures $\tau^{(m,B)}_t$ admit a weak limit as $t \to 0$ on $X^\hyb$. The support of this limiting measure coincides with the \emph{essential skeleton} of the pair $(X_\Ct,\frac{1}{m}B_\Ct)$  and is given by a sum of Lebesgue measures on edges and Dirac masses on vertices. 
\end{maincor}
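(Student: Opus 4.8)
The plan is to deduce Corollary \ref{mainCorA} from Theorem \ref{mainThmA} by relating the metrized curve complex hybrid space $\X_\CC^\hyb$ to the Berkovich hybrid space $X^\hyb$. Both spaces are partial compactifications of the same family $X \to \D^*$; the difference lies only in the central fibers, $\Delta_\CC(\X)$ versus $X^\an_\Ct$. The key geometric input is the existence of a natural retraction map: after a suitable base change producing a semistable model $\X$, the Berkovich analytification $X^\an_\Ct$ contains the dual graph (skeleton) $\Delta(\X)$ of $\X_0$, and there is a retraction $\rho \colon X^\an_\Ct \to \Delta(\X)$. Moreover, the metrized curve complex $\Delta_\CC(\X)$ admits a continuous surjection $\pi \colon \Delta_\CC(\X) \to \Delta(\X)$ that collapses each curve $E$ in $\Delta_\CC(\X)$ to a vertex of $\Delta(\X)$ and maps each edge of $\Delta_\CC(\X)$ homeomorphically (up to reparametrization by length) onto the corresponding edge of $\Delta(\X)$. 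First I would assemble a commutative diagram extending these central-fiber maps to a continuous map $\X_\CC^\hyb \to X^\hyb$ that restricts to the identity on $X$; this is essentially the content of the comparison between the two hybrid topologies established in \cite{Shi20A}, so I would cite that and only verify the continuity statement I need.

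Next, I would push forward the limiting measure. By Theorem \ref{mainThmA}, $\tau^{(m,B)}_t \to \tau^{(m,B)}_0$ weakly on $\X_\CC^\hyb$, and since $\tau^{(m,B)}_t$ is supported on $X_t \subset X$ for $t \neq 0$, its image under the continuous map to $X^\hyb$ is again $\tau^{(m,B)}_t$. By continuity of pushforward under weak convergence (the map being proper, or the spaces being compact enough near the central fiber), $\tau^{(m,B)}_t \to \pi_* \tau^{(m,B)}_0$ weakly on $X^\hyb$. So the limiting measure on $X^\hyb$ is the pushforward of the explicit measure described after Theorem \ref{mainThmA}: the Narasimhan--Simha measures on the curves $E$ get collapsed to Dirac masses at the corresponding vertices of $\Delta(\X)$ (with total mass equal to the mass of $E$ under $\tau^{(m,B)}_0$), while the Lebesgue measures on the edges of $\Delta_\CC(\X)$, each of mass $\frac{1}{N}$ for a maximal inessential chain of length $N$, combine along each such chain to give a Lebesgue measure of total mass $\frac{1}{N} \cdot N = 1$ — i.e.\ a uniform probability measure — on the corresponding edge of $\Delta(\X)$, which after reparametrization is just Lebesgue measure normalized appropriately.

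Then I would identify the support. The support of $\pi_* \tau^{(m,B)}_0$ is the union of the vertices carrying positive Dirac mass and the edges carrying positive Lebesgue mass; I claim this is exactly the essential skeleton $\Sk(X_\Ct, \frac{1}{m}B_\Ct)$. To see this I would use the description of the essential skeleton of a log Calabi--Yau type pair as the locus of Kontsevich--Soibelman weight-minimizing valuations, or equivalently (for curves) the minimal skeleton together with the legs determined by $B$: a vertex of $\Delta(\X)$ lies in the essential skeleton iff the corresponding component $\X_{0,i}$ has log discrepancy zero for $(X_{\D}, \frac{1}{m}\B)$, and an edge lies in it iff it is essential in the sense of Section \ref{subsecDualGraph}. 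I would then match this, component by component, with the condition under which $\tau^{(m,B)}_0$ assigns positive mass — a curve $E$ carries a nonzero Narasimhan--Simha measure precisely when $H^0(E, \L|_E) \neq 0$ after twisting by the boundary and edge points, which is the same numerical condition — and the inessential chains, which carry the collapsing Lebesgue pieces that reassemble to full edges, are exactly the ones contracted in passing to the essential skeleton. This bookkeeping is where I expect the real work to be.

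The main obstacle, I expect, is not the abstract pushforward argument — that is formal once the continuous map $\X_\CC^\hyb \to X^\hyb$ is in hand — but rather the precise identification of the support with the essential skeleton of the \emph{pair} $(X_\Ct, \frac{1}{m}B_\Ct)$, including getting the contributions of the horizontal divisor $B$ right. One has to check that the ``legs'' of the skeleton coming from the components $B_i$ of $B$ (which appear because $\frac{1}{m}B$ has coefficients in $(0,1)$) are accounted for correctly by the Narasimhan--Simha measures with the extra twisting by $b_iP_i$, and that the normalization of Lebesgue measure on each edge of the essential skeleton matches the metric coming from the model $\X$ and the base-change exponent $N$. I would handle this by reducing to the local picture at each node and each point of $B_\red \cap \X_0$, where everything is an explicit computation with the standard coordinates on a semistable model, and invoking the comparison of essential skeletons under base change (they are canonically identified, being intrinsic to $X_\Ct$ and $B_\Ct$).
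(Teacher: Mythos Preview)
Your overall strategy matches the paper's: deduce convergence on $X^\hyb$ from Theorem~\ref{mainThmA} by comparing the two hybrid spaces, then identify the support with the essential skeleton. However, there is a genuine gap in the mechanism you propose.

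The continuous map $\X^\hyb_\CC \to X^\hyb$ you posit does not exist. The Berkovich hybrid space $X^\hyb$ is the inverse limit $\varprojlim_{\X'} (\X')^\hyb$ over all snc models, so there are continuous projections $X^\hyb \to (\X')^\hyb$ but no continuous section in the other direction. Concretely, a sequence $(p_n)$ in $X$ converging to a smooth point $w_0 \in E \subset \Delta_\CC(\X)$ need not converge in $X^\hyb$ at all: depending on how $p_n$ approaches $\X_0$, its limit in $X^\an_\Ct$ could be any point in the fiber of the retraction over $v_E$, not just $v_E$ itself. So a single pushforward cannot work.

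The paper handles this via Corollary~\ref{corConvergenceBerkovichHybridSpace}, which rests on Lemma~\ref{lemLimitOnOnlyOneModel}: because the limiting measure $\tau^{(m,B)}_0$ on $\Delta_\CC(\X)$ has \emph{no atoms}, one can bootstrap convergence from $\X^\hyb_\CC$ to $(\X')^\hyb_\CC$ for every model $\X'$ dominating $\X$ (via Banach--Alaoglu and uniqueness of a measure with prescribed atomless pushforward). Pushing forward to each $(\X')^\hyb$ and taking the inverse limit gives the limit on $X^\hyb$, supported on $\Gamma_\X$. The no-atoms hypothesis is the ingredient your argument is missing.

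For the support identification, the paper also proceeds differently from what you sketch. Section~\ref{subsecEssentialSkeleton} computes the weight functions $\wt_\theta$ directly on $\Gamma_{\X'}$ (for $\X'$ the minimal semistable model of $(X,B_\red)$) and shows their joint minimality locus is exactly $\Gamma_\X$, where $\X$ is the minimal snc model of $(X,\frac{1}{m}B)$. Since all coefficients of $\frac{1}{m}B$ are strictly less than $1$, the essential skeleton is contained in $\Gamma_{\X'}$ and there are no ``legs'' attached to the horizontal divisor. Also, the inessential chains are \emph{not} contracted in the essential skeleton---they remain as edges of $\Gamma_\X$ carrying Lebesgue mass; your description conflates $\Gamma_\X$ with the stable dual graph $\tilde\Gamma$.
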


The \emph{essential skeleton}  of the pair $(X_\Ct,\frac{1}{m}B_\Ct)$ (see Section \ref{subsecEssentialSkeleton} for details) is a piecewise linear subset of $X^\hyb$ which encodes information about the pair $(X_\Ct,\frac{1}{m}B_\Ct)$ \cite{KS06} \cite{MN15} \cite{BM19}.

Moreover, in the case when $B = 0$, the measure $\tau_0^{(m)}$ is independent of the one parameter family $X$. More precisely, if $\X \to \D$ and $\Y \to \D$ are two families of genus $g$ curves degenerating to the same semistable curve $C = \X_0 = \Y_0$, then the limit of the Narasimhan-Simha measure with respect to $\Omega_{X_t}^{\otimes m}$ and with respect to $\Omega_{Y_t}^{\otimes m}$ coincide on $\Delta_\CC(C)$. 
Since the data of the one-parameter family keeps track of the `direction of approach' towards a stable curve in $\Mgbar$, we could ask whether the Narasimhan-Simha measures with respect to $\Omega^{\otimes m}$ form a continuous family of measures on $\Mgbar$.

To make this precise, we first extend the notion of Narasimhan-Simha measure to all stable curves by considering the limiting measure described in Theorem \ref{mainThmA} and collapse all edges to a node. Note that this measure will place a unit Dirac mass at all nodal points of a stable curve.

Now consider the universal curve $\Cgbar \to \Mgbar$. Recall that, topologically, the fiber of this map over the isomorphism class of a stable curve $C$ is $C/\Aut(C)$. 
 Let ${\tau'}^{m}_{C}$ denote the pushforward of $\tau^{(m)}$ from $C$ to $C/\Aut(C)$. Let $(\mathcal{C}^0(\Cgbar))^{\vee}$ denote the space of Radon measures on $\Cgbar$ equipped with the weak$^*$ topology. 
\begin{mainthm}
  \label{mainThmC}
  The map $\Mgbar \to (\mathcal{C}^0(\Cgbar))^{\vee}$ given by $[C] \mapsto {\tau'}^{m}_{C}$ is continuous.    
\end{mainthm}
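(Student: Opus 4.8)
The plan is to reduce the continuity of $[C] \mapsto {\tau'}^m_C$ to the one-parameter convergence statement of Theorem \ref{mainThmA} together with a standard argument that promotes sequential convergence along one-parameter families to genuine continuity on $\Mgbar$. Since $\Mgbar$ is a separated Deligne--Mumford stack (or, working with the coarse space, a compact metrizable analytic space) and the target $(\mathcal{C}^0(\Cgbar))^\vee$ with the weak$^*$ topology is metrizable on bounded sets, it suffices to show: for every sequence $[C_n] \to [C_\infty]$ in $\Mgbar$, one has ${\tau'}^m_{C_n} \to {\tau'}^m_{C_\infty}$ weakly on $\Cgbar$. First I would treat the case where all $C_n$ lie in the open part $\Mg$ and converge to a point of the boundary: after passing to a subsequence, choose a holomorphic arc (equivalently a map $\D \to \Mgbar$ sending $0 \mapsto [C_\infty]$ and $\D^* \to \Mg$) through which the $[C_n]$ pass, possibly after a ramified base change $t \mapsto t^N$ to achieve semistable reduction; pulling back the universal curve gives a family $\X \to \D$ to which Theorem \ref{mainThmA} applies. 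The limiting measure $\tau^{(m)}_0$ on $\Delta_\CC(\X)$ pushes forward, under the edge-collapsing map $\Delta_\CC(\X) \to \X_0 = C_\infty$, precisely to the extended Narasimhan--Simha measure on $C_\infty$ as defined just before the statement; dividing by $\Aut(C_\infty)$ and using that the map to $\Cgbar$ factors through $C_\infty/\Aut(C_\infty)$ gives ${\tau'}^m_{C_n} \to {\tau'}^m_{C_\infty}$ on the relevant fibers, and one checks this is compatible with the topology of $\Cgbar$ near the central fiber.

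The general case, where the $C_n$ may themselves be singular, is handled by the same mechanism: locally near $[C_\infty]$ the versal deformation space of $C_\infty$ is (a quotient of) a polydisc with coordinates $(s_1,\dots,s_k,\dots)$ in which the boundary strata are coordinate hyperplanes, and any convergent sequence lies, after subsequence, in a one-parameter analytic family $\D \to \Mgbar$ of the type to which Theorem \ref{mainThmA} and its corollaries apply — here one must allow the central fiber's dual graph to be a contraction of the dual graph of the nearby fibers, so the ``limit of edges'' in $\Delta_\CC$ can be nodes of $C_\infty$, exactly the content of the extension of $\tau^{(m)}$ to stable curves described in the paragraph preceding the theorem. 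The key point that makes this work uniformly is the model-independence of $\tau^{(m)}_0$ noted above (for $B = 0$): the limit depends only on $C_\infty$ and not on the chosen arc, so the subsequential limits all agree and the full sequence converges.

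The main obstacle I expect is the interaction with the automorphism groups and the resulting non-manifold structure of $\Cgbar$ near points with extra automorphisms. One has to verify that pushing forward to $C/\Aut(C)$ is continuous: when $[C_n] \to [C_\infty]$ with $|\Aut(C_\infty)| > |\Aut(C_n)|$, the quotient $C_\infty/\Aut(C_\infty)$ is a more drastic collapse, and one needs that $\tau^{(m)}$ on $C_n$, being $\Aut(C_n)$-invariant, has a well-defined limit after the further quotient. Concretely I would work on an étale chart: pick a finite group $G \supseteq \Aut(C_\infty)$ acting on a $G$-invariant neighborhood of $[C_\infty]$ in the versal family, so that $\Cgbar$ is locally the quotient by $G$ of a family of curves with $G$-action; then $\tau^{(m)}$ is tautologically $G$-equivariant, and the continuity downstairs follows from continuity upstairs — which is Theorem \ref{mainThmA} applied $G$-equivariantly — together with the fact that pushforward of measures along a proper map $C \to C/G$ is weak$^*$ continuous. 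The remaining bookkeeping (choosing the base change $N$ uniformly, checking that the Dirac masses at nodes of $C_\infty$ are the limits of the edge-Lebesgue masses and of Dirac masses at nodes of $C_n$, and that nothing escapes to infinity since total mass is constant) is routine given the explicit description of $\tau^{(m,B)}_0$ in Theorem \ref{thmA}.
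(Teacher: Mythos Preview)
Your reduction has a genuine gap. The step ``after passing to a subsequence, choose a holomorphic arc $\D \to \Mgbar$ \ldots\ through which the $[C_n]$ pass'' is not generally possible: in a base of complex dimension $3g-3$, a sequence converging to the origin need not lie on any single holomorphic curve through the origin, even after passing to a subsequence (think of points like $(1/n, e^{-n}, 0,\dots,0)$). So the problem does not reduce to the one-parameter statement of Theorem~\ref{mainThmA}. Even if you tried instead to put each $[C_n]$ individually on an arc through $[C_\infty]$, you would then need the convergence in Theorem~\ref{mainThmA} to be \emph{uniform} over all such arcs, which is a genuinely stronger statement than Theorem~\ref{mainThmA} itself. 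A second, related issue: Theorem~\ref{mainThmA} is stated for a family whose fibers over $\D^*$ are smooth, so it does not directly cover sequences $[C_n]$ that are already singular and approach a deeper boundary stratum.

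The paper does not attempt this reduction. Instead it works directly on a Kuranishi family $S \to D \simeq \D^{3g-3}$ for $S_{\mathbf 0}$, applies Grauert's lemma over the full polydisc to produce sections $\theta_1,\dots,\theta_M$ trivializing $\pi_*\omega_S^{\otimes m}$, and then reproves the estimates of Section~\ref{secConvergenceNS} in this multi-parameter setting (each node has its own smoothing parameter $t_i$, and one gets $\|\theta_{i,\mathbf t}\|'_{S_{\mathbf t}} = (2\pi\log|t_i|^{-1})^{m/2} + O(1)$ for $i\le s$, etc.). This yields convergence $\tau_{\mathbf t} \to \tau_{\mathbf 0}$ as $\mathbf t \to \mathbf 0$ in $D$, i.e.\ genuine continuity on the base, not just along arcs. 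Your treatment of the automorphism group via an \'etale chart is essentially what the paper does in the final subsection, and that part is fine; the substance of the argument, however, is the multi-parameter re-derivation of the asymptotics, which cannot be replaced by an appeal to the one-parameter theorem.
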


Note that the above result is in a stark contrast with the case for the Bergman measures (i.e.~when $m = 1$), where the mass of limiting measure on an edge depends strongly on the lengths of all the edges in the dual graph. Thus, an analog of Theorem \ref{mainThmC} would be false in the case of Bergman measures2. In fact, to extend the Bergman measures continuously, Amini and Nicolussi construct a large hybrid space which keeps track of the relative orders of the logarithmic rates of approach to each node on a stable curve \cite{AN20}.

We also compute the asymptotics of a measure closely related to the Narasimhan-Simha measure. Let $Y$ be a compact Riemann surface of genus $g$, $P_1,\dots,P_r$ points on $Y$ and $0 < b_1,\dots,b_r < m$ integers. 
We get a Hermitian pairing on $H^0(Y,\Omega_{Y}^{\otimes m}(b_1P_1+\dots+b_r P_r))$ given by
$$ \langle \theta,\vartheta \rangle = \left(\frac{i}{2}\right)^{m} \int_{Y} \frac{\theta \wedge \overline{\vartheta}}{(\tau^{(m,b_1P_1 + \dots + b_rP_r)})^{m-1}}.$$
Let $e_1,\dots,e_M$ be an orthonormal basis of $H^0(Y,\Omega_Y^{\otimes m}(b_1P_1+\dots+b_r P_r))$ with respect to the above pairing. Then, the positive volume form $$\mu^{(m,b_1P_1+\dots+b_rP_r)} = \left(\frac{i}{2}\right)^{m}\sum_{i=1}^M \frac{e_i \wedge \overline{e_i}}{(\tau^{(m,b_1P_1 + \dots + b_rP_r)})^{m-1}}$$ does not depend on the choice of the orthonormal basis and we call it as the \emph{pluri-Bergman measure} on $Y$ associated to $\Omega_Y^{\otimes m}(b_1P_1+\dots+b_r P_r)$. Note that when $m = 1$ and $r=0$, $\mu^{(1)}$ is just the Bergman measure. Thus, this measure is yet another generalization of the Bergman measure. 

Consider the family $X \to \D^*$ along with the horizontal divisor $B$. Using the same notation as before, let $\mu^{(m,B)}_t$ denote the measure $\mu^{(m,B_t)}$ on $X_t$ associated to $\L_t$. We are also able to compute the limit of $\mu^{(m,B)}_t$ on $\X^\hyb_\CC$.

 \begin{mainthm}
  \label{mainThmB}
There exists a measure $\mu^{(m,B)}_0$ on $\Delta_\CC(\X)$ such that $\mu^{(m,B)}_t \to \mu^{(m,B)}_0$ weakly as measures on $\X^\hyb_\CC$. 
\end{mainthm}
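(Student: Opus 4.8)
The plan is to deduce Theorem \ref{mainThmB} from Theorem \ref{mainThmA} (and its refinement Theorem \ref{thmA}) by controlling the orthonormalisation implicit in $\mu^{(m,B)}_t$. Two elementary observations set the stage. First, integrating $\mu^{(m,B)}_t$ against the constant $1$ gives $\sum_i\langle e_i,e_i\rangle_t=M$, where $\langle\cdot,\cdot\rangle_t$ is the Hermitian pairing on $H^0(X_t,\L_t)$ built from $\tau^{(m,B)}_t$ as in the introduction and $M:=\dim_\C H^0(X_t,\L_t)$ is independent of $t$ by flatness; hence $\{\mu^{(m,B)}_t\}$ is precompact in the space of Radon measures on $\X^\hyb_\CC$ and every subsequential weak limit is supported on the central fibre $\Delta_\CC(\X)$, so it suffices to prove uniqueness of such a limit. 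Second, $\mu^{(m,B)}_t$ is basis independent, and for \emph{any} basis $(\theta^{(t)}_j)_{j=1}^{M}$ of $H^0(X_t,\L_t)$ with Gram matrix $G_t=(\langle\theta^{(t)}_i,\theta^{(t)}_j\rangle_t)$ one has
\[
  \mu^{(m,B)}_t=\left(\tfrac{i}{2}\right)^{m}\sum_{j,k}(G_t^{-1})_{jk}\,\frac{\theta^{(t)}_j\wedge\overline{\theta^{(t)}_k}}{(\tau^{(m,B)}_t)^{m-1}}.
\]
Thus the limit of $\mu^{(m,B)}_t$ is dictated by the limit of $\tau^{(m,B)}_t$, which is Theorem \ref{mainThmA}, together with the asymptotics of $G_t$ in a well-chosen basis.

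Next I would import the machinery behind Theorem \ref{mainThmA}. Fix a semistable model $\X$ and let $\overline{\L}$ be the line bundle on $\X$ extending $\L$, generated near a node $xy=t$ of $\X_0$ not meeting $\overline{B}$ by $(dx/x)^{\otimes m}$, so that $H^0(X_t,\L_t)=H^0(X_t,\overline{\L}|_{X_t})$ for all $t$. After a further finite base change one obtains an adapted basis $(\theta^{(t)}_j)_j$ of $H^0(X_t,\L_t)$ and integers $(a_j)_j$ such that the rescaled sections $t^{-a_j}\theta^{(t)}_j$ extend over $t=0$ and restrict on the central fibre to the limiting sections on the curves $E$ of $\Delta_\CC(\X)$ and along the inessential chains; the line bundle that appears on a curve $E$ is $\L_E:=\Omega_E^{\otimes m}(D_E)$, where $D_E$ carries the $B$-points of $E$ with their multiplicities and the node-points of $E$ with multiplicity $m-1$ (admissible since $m\ge 2$; these are the boundary twists occurring in Theorem \ref{thmA}). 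One also records the decomposition of $X_t$, for small $|t|$, into thick pieces $X_t^E$ lying over the curves and thin annular necks $X_t^e$ around the nodes, together with the behaviour of $\tau^{(m,B)}_t$ established there: $\tau^{(m,B)}_t\to\tau^{(m,D_E)}_E$ locally uniformly on $X_t^E$, while on a neck $X_t^e$ in a maximal inessential chain of length $N_e$ one has the collapsing profile $\tau^{(m,B)}_t\sim\frac{1}{2\pi N_e(-\log|t|)}\cdot\frac{i\,dx\wedge d\overline{x}}{2|x|^2}$, up to terms of lower order in $-\log|t|$.

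The crux is the asymptotics of $G_t$ in this basis. Over a thick piece $X_t^E$ everything is uniform ($\tau^{(m,B)}_t\to\tau^{(m,D_E)}_E>0$ and $t^{-a_j}\theta^{(t)}_j$ converges uniformly), so the block of $G_t$ indexed by the sections living on $E$, suitably rescaled, tends to the Gram matrix of the limiting sections for the pairing attached to $\tau^{(m,D_E)}_E$, which is positive definite. The hard part is the contribution of the necks: there $\tau^{(m,B)}_t\to 0$, so $(\tau^{(m,B)}_t)^{-(m-1)}$ injects unbounded powers of $-\log|t|$ into the integrals defining $G_t$, and these must be matched \emph{exactly} against the powers produced by the normalisation of the section concentrated along each inessential chain — an order-of-magnitude estimate does not suffice. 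I expect this to be the main obstacle; carrying it out needs a careful local analysis on the necks, in the spirit of Theorem \ref{thmA}. The outcome should be that, after rescaling rows and columns by the appropriate powers of $|t|$ and $-\log|t|$, $G_t$ converges to a block-diagonal matrix whose blocks are the curve blocks above and one positive $1\times1$ block per maximal inessential chain, the off-diagonal blocks vanishing; hence its inverse is again block-diagonal with the inverse blocks.

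Granting this, the limit assembles as follows. Testing the displayed formula for $\mu^{(m,B)}_t$ against a continuous function on $\X^\hyb_\CC$ supported in the interior of a curve $E$, only the $E$-block of $G_t^{-1}$ survives, the powers of $|t|$ cancel against the sections $t^{-a_j}\theta^{(t)}_j$, and the integral converges to $\left(\tfrac{i}{2}\right)^{m}\sum_{j,k}(G_E^{-1})_{jk}\,\theta_j^E\wedge\overline{\theta_k^E}/(\tau^{(m,D_E)}_E)^{m-1}$, i.e.\ to the pluri-Bergman measure $\mu^{(m,D_E)}_E$ on $E$ (by basis independence). Testing against a function supported in the interior of an edge $e$, the point is that on the necks the Narasimhan--Simha maximum is attained, to leading order, by a multiple of the section concentrated along the relevant chain, and for that section its $\langle\cdot,\cdot\rangle_t$-normalisation agrees asymptotically with its Narasimhan--Simha normalisation (both amount to dividing by $(\int_{X_t}|\theta|^{2/m})^{m/2}$); consequently $\mu^{(m,B)}_t$ and $\tau^{(m,B)}_t$ agree on the necks to leading order, and Theorem \ref{thmA} gives that the edge part of the limit is again $\frac{1}{N_e}$ times Lebesgue measure on $e$. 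Since the thick pieces, the necks, and the transition collars between them exhaust $X_t$, and the collars carry $o(1)$ mass (the estimate for $\tau^{(m,B)}_t$, with the extra factor $(\tau^{(m,B)}_t)^{-(m-1)}$ accounted for), every subsequential limit equals $\mu^{(m,B)}_0:=\sum_E\mu^{(m,D_E)}_E+\sum_e\frac{1}{N_e}\lambda_e$, where $\lambda_e$ is Lebesgue measure on the edge $e$ and the first sum runs over curves $E$ with $H^0(E,\L_E)\ne 0$. (As a consistency check, this measure has total mass $M$ — a Riemann--Roch computation on the components comparing $h^0(\X_0,\overline{\L}|_{\X_0})$ with the $h^0(E,\L_E)$ and the $\frac{1}{N_e}$ — matching $\int_{X_t}\mu^{(m,B)}_t$.) Therefore $\mu^{(m,B)}_t\to\mu^{(m,B)}_0$ weakly on $\X^\hyb_\CC$.
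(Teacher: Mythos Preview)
Your proposal is correct and follows essentially the same route as the paper: express $\mu^{(m,B)}_t$ via the Gram matrix $G_t$ of a well-chosen basis, determine the asymptotics of $G_t$ (block form with edge blocks scaling like $(\log|t|^{-1})^m$ and a curve block tending to the identity), invert, and combine with the asymptotics of $\tau^{(m,B)}_t$ from Theorem~\ref{mainThmA} to identify the limit on curves and edges separately. Two small remarks: in the paper's semistable setting the sections $\theta_j$ already extend over $t=0$ without any $t^{-a_j}$ rescaling (only $\log|t|^{-1}$ rescalings enter), and the paper organises the endgame via the local convergence criterion of Lemma~\ref{lemConvergenceMCCHC} rather than a precompactness/uniqueness-of-subsequential-limits argument, but these are cosmetic differences.
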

The measure $\mu^{(m,B)}_0$ is also a sum of pluri-Bergman measures on the curves in $\Delta_\CC(\X)$ and Lebesgue measures on the edges. The mass of each edge with respect to $\mu^{(m,B)}_0$ is the same as the mass of the edge with respect to $\tau^{(m,B)}_0$. For details, see Section \ref{secConvergenceCanonical} and Theorem \ref{thmB}.

As before, we also get the limit of measures $\mu^{(m,B)}_t$ on the hybrid space $X^\hyb$. 
\begin{maincor}
  \label{mainCorB}
  The measures $\mu^{(m,B)}_t$ converge to a measure on the hybrid space $X^\hyb$ whose support is the essential skeleton of the pair  {$(X_\Ct,\frac{1}{m}B_\Ct)$}. The limiting measure is a sum of Dirac masses on the vertices and Lebesgue measure on the edges. 
\end{maincor}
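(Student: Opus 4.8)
The plan is to deduce Corollary~\ref{mainCorB} from Theorem~\ref{mainThmB} in the same way that Corollary~\ref{mainCorA} is deduced from Theorem~\ref{mainThmA}: by pushing the measures forward along the canonical continuous map $\pi \colon \X^\hyb_\CC \to X^\hyb$ of \cite{Shi20A}. Recall that $\pi$ is the identity over $\D^*$ and, on central fibers, collapses the metrized curve complex $\Delta_\CC(\X)$ onto the skeleton $\Sk(\X) \subseteq X^\an_\Ct$: each curve $E$ of $\Delta_\CC(\X)$ is sent to its associated divisorial point $v_E$, while the edges are mapped affinely onto the edges of the dual graph. Since $\pi$ is continuous, pushforward of Radon measures preserves weak convergence, and because $\pi|_{X} = \mathrm{id}_X$ this gives $\mu^{(m,B)}_t = \pi_*\mu^{(m,B)}_t \to \pi_*\mu^{(m,B)}_0$ weakly on $X^\hyb$. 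So the limiting measure is $\pi_*\mu^{(m,B)}_0$, and it remains only to identify it.

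Next I would unwind $\pi_*\mu^{(m,B)}_0$ using the explicit shape of $\mu^{(m,B)}_0$ recorded after Theorem~\ref{mainThmB} (see also Theorem~\ref{thmB}): it is a sum, over the curves $E$ of $\Delta_\CC(\X)$, of pluri-Bergman measures $\mu_E$ associated to the appropriate twist $\L_E$ of $\Omega_E^{\otimes m}$ --- each a positive volume form of finite total mass $h^0(E,\L_E)$ --- together with Lebesgue measures on the edges, with the same edge masses as for $\tau^{(m,B)}_0$, namely $\frac{1}{N}$ on each edge of a maximal inessential chain of length $N$ (Section~\ref{subsecDualGraph}). Pushing forward, each $\mu_E$ becomes the Dirac mass $h^0(E,\L_E)\,\delta_{v_E}$, which is nonzero precisely when $H^0(E,\L_E) \neq 0$; and the Lebesgue measure on each edge goes to Lebesgue measure on the corresponding segment of $\Sk(\X)$, so that along a maximal inessential chain the $N$ pieces of mass $\frac{1}{N}$ reassemble into a single edge of the essential skeleton carrying total Lebesgue mass $1$ (the intermediate $\P^1$-curves of the chain, having no sections, contribute nothing). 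Hence $\pi_*\mu^{(m,B)}_0$ is a sum of Dirac masses at vertices and Lebesgue measures on edges, as claimed.

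Finally, to identify the support I would observe that $\supp\mu^{(m,B)}_0 = \supp\tau^{(m,B)}_0$ inside $\Delta_\CC(\X)$ --- both consist of the curves $E$ with $H^0(E,\L_E) \neq 0$ together with all the edges, since a pluri-Bergman form is a positive volume form exactly when the corresponding Narasimhan--Simha form is --- and therefore $\supp\pi_*\mu^{(m,B)}_0 = \supp\pi_*\tau^{(m,B)}_0$, which equals the essential skeleton of $(X_\Ct,\frac{1}{m}B_\Ct)$ by Corollary~\ref{mainCorA}. I expect the only genuinely delicate ingredient to be the one already needed for Corollary~\ref{mainCorA}: matching the essential components $E$ (those with $H^0(E,\L_E) \neq 0$) with the vertices of the essential skeleton, and the maximal inessential $\P^1$-chains with its edges, via the numerical characterization of the essential skeleton recalled in Section~\ref{subsecEssentialSkeleton}. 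Everything else is a formal consequence of Theorem~\ref{mainThmB} and the continuity of pushforward.
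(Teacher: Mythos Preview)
Your high-level strategy is the right one, and your description of the limiting measure and its support is correct. However, the mechanism you propose for transferring convergence from $\X^\hyb_\CC$ to $X^\hyb$ has a genuine gap: there is no continuous map $\pi\colon \X^\hyb_\CC \to X^\hyb$ of the kind you describe. The map you write down (identity on $X$; each curve $E\subset\Delta_\CC(\X)$ collapsed to its divisorial point $v_E\in X^\an_\Ct$) is well defined set-theoretically but fails continuity. Concretely, take a smooth point $Q\in E\subset\X_0$ and a sequence $p_n\in X_{t_n}$ converging to $Q$ in $\X^\hyb_\CC$. If $\X'$ is the blowup of $\X$ at $Q$ with exceptional divisor $E'$, then in $(\X')^\hyb$ the sequence $p_n$ converges to $v_{E'}$, whereas the image of $v_E$ under the retraction $X^\an_\Ct\to\Gamma_{\X'}$ is $v_E\neq v_{E'}$. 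Since $X^\hyb=\varprojlim_{\X'}(\X')^\hyb$, this shows $p_n\not\to v_E$ in $X^\hyb$, so your $\pi$ cannot be continuous.

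The paper handles this via Corollary~\ref{corConvergenceBerkovichHybridSpace}, which is not a simple pushforward argument. The point is that one first checks the limit $\mu_0^{(m,B)}$ on $\Delta_\CC(\X)$ has no atoms; then Lemma~\ref{lemLimitOnOnlyOneModel} (a Banach--Alaoglu argument) lifts the convergence uniquely to $(\X')^\hyb_\CC$ for every model $\X'$ dominating $\X$; one then pushes forward along the genuinely continuous maps $(\X')^\hyb_\CC\to(\X')^\hyb$, checks compatibility, and concludes via the inverse-limit description of $X^\hyb$. The ``no atoms'' hypothesis is what guarantees the lift to higher models is unique and hence that the limits on the various $(\X')^\hyb$ are compatible---this is precisely the ingredient missing from your direct pushforward. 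Once this is in place, your identification of the limit as Dirac masses plus Lebesgue pieces, and of the support as $\Gamma_\X=\Sk(X_\Ct,\tfrac{1}{m}B_\Ct)$ (Section~\ref{subsecEssentialSkeleton}), goes through exactly as you say.
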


Finally, we would also like to understand what happens to the limiting pluri-Bergman measure as $m \to \infty$. We compute this limit on $X^\hyb$ instead of $\X^\hyb_\CC$. The reason for doing so is that it is not clear to us what the limit of $\mu^{(m)}$ as $m \to \infty$ is for a fixed Riemann surface. However, the total mass of $\mu^{(m)}$ is easy to figure out. 

There are two ways to think of the limit. In the first case, we fix $B$ and suppose that $g \geq 2$.  Let $\mu_{t}^{(m,B)}$ denote the pluri-Bergman measure on $X_t$ induced by $\Omega_{X_t}^{\otimes m}(B|_{X_t})$. By abuse of notation, let $\mu_{0}^{(m,B)}$ also denote the weak limit of $\mu_t^{(m,B)}$  on $X^\hyb$ as $t \to 0$. Then, the measures $\mu_{0}^{(m,B)}$, normalized to volume $2g-2$, converges to an analogue of the hyperbolic measure on $X^\an_\Ct$  and this limit does not depend on the choice of the divisor $B$ (see Section \ref{subsecLimitMToInfty}). This limit measure lives on the dual graph of the stable reduction of $X$ (which coincides with the essential skeleton in this case). It places no mass on the edges and places a mass of $2g(v) - 2 + \val(v)$ on each vertex, where $g(v)$ is the genus of the irreducible component associated to $v$ and $\val(v)$ is the valency of the vertex $v$ in the dual graph. It follows from \cite{Sch19} that this measure is the limit of hyperbolic measures on $X_t$. It seems to be unknown whether the measures $\mu_t^{(m)}$, normalized to volume $2g-2$, themselves converge weakly to the hyperbolic measure on $X_t$. 

Another way to think of the limit is to fix the $\Q$-divisor $\frac{1}{m}B$ instead of fixing $B$ i.e. we consider $\mu^{(km,kB)}_0$ associated to $\omega_{X_t}^{\otimes km}(kB|_{X_t})$ and take the limit as $k \to \infty$. Assume that $2g - 2 + \frac{\deg(B|_{X_t})}{m} > 0$. 
In this case, there exists a model $\X$ such that $\mu^{(km,kB)}_0$, normalized to volume $2g-2 + \frac{\deg(B|_{X_t})}{m}$, converges to a measure that places no mass on the edges and places a Dirac mass of $2g(v_E) - 2 + \val(v_{E}) + \frac{\deg(\B|_E)}{m}$ on each vertex $v_{E}$ of the dual graph of $\X_0$.

As for the limit of $\tau^{(m,B)}_0$, it is not even clear to us what the asymptotics of $\int_{X_t} \tau_t^{m,B}$ is as $m \to \infty$. In the case of $B = 0$, Tsuji has shown that the supremum of $\frac{\tau^{(m)}}{\int \tau^{(m)}}$ as $m \to \infty$ exists as a bounded volume form  \cite[Theorem 4.1]{Tsu07}. However, it is not clear whether this  supremum is a limit or not.  

\subsection*{Method of proof} A general observation (Lemma \ref{lemLimitOnOnlyOneModel}) tells that us in order to compute the limit on all snc models of $X$, it is enough to compute it on any one snc model. We work with the model $\X$ of $X$ obtained from the minimal snc model of $(X,B)$ by repeatedly blowing down the $(-1)$-curves $E$ in the central fiber for which $\deg(\B|_{E}) < m$. The advantage of working with this model is that $h^0(\omega_{\X_0}(\B|_{\X_0})) = h^0(\omega_{X_t}(B|_{X_t}))$; now we can apply Grauert's lemma \cite[Corollary III.12.9]{Har77} to find sections of $\omega_{\X/\D}(\B)$ that restrict to a basis of $H^0(\omega_{\X_0}(\B|_{\X_0}))$ and $H^0(\omega_{X_t}(B|_{X_t}))$. We make a clever choice of such a basis that renders the computations simple. By analyzing these sections, we find expressions for $\tau^{(m)}_t$ and $\mu^{(m)}_t$. Now, understanding the asymptotics of these sections allows us to understand the asymptotics of $\tau^{(m)}_t$ and $\mu^{(m)}_t$.

Theorems \ref{mainThmC} and \ref{mainThmB} also follow from similar calculations.

We prove Corollary \ref{corConvergenceBerkovichHybridSpace}, a general result on how to transfer convergence from $\X^\hyb_\CC$ to $X^\hyb$. As a consequence, Corollaries \ref{mainCorA} and \ref{mainCorB} follow from Theorems \ref{mainThmA} and \ref{mainThmB}, respectively, using Corollary \ref{corConvergenceBerkovichHybridSpace}.

\subsection*{Further questions}
On a fixed Riemann surface, we can consider a sequence of measures constructed iteratively using the recipe for constructing the pluri-Bergman measure, starting with the Bergman measure. Tsuji has shown that this sequence of measures converges to the hyperbolic measure \cite{Tsu10}. It would be interesting to know what the limit of these measures on $\Delta_\CC(\X)$ is and whether the sequence of limiting measures could be given a dynamical interpretation. 


We could also ask whether the measures $\mu^{(m)}$ converge to the hyperbolic measure as $m \to \infty$, which is the case for their limits on $X^\hyb$.

It would be interesting to know if there is a higher dimensional analog of Theorem \ref{mainThmA} and Corollary \ref{mainCorA}. 

\subsection*{Organization of the paper}
We discuss some preliminaries in Section \ref{secPreliminaries}. In Section \ref{secMCCHS}, we discuss the metrized curve complex hybrid space.
In Section \ref{secSectionsOfOmegaMB}, we discuss the global sections of $\omega_{\X_0}^{\otimes m}(\B|_{\X_0})$. We prove Theorem \ref{mainThmA} in Section \ref{secConvergenceNS} and in Section \ref{secConvergenceCanonical}, we prove Theorem \ref{mainThmB}. In Section 7, we prove Theorem \ref{mainThmC} 

\subsection*{Acknowledgments}
I would like to thank my advisor, Mattias Jonsson, for his suggestions and comments. This work was supported by the NSF grants DMS-1600011 and DMS-1900025.

\section{Preliminaries}
\label{secPreliminaries}
\subsection{Families of curves and models}
\label{subsecModels}
Let $\D$ denote the complex unit disk and let $\D^*$ denote the complex unit disk punctured at the origin. A family of curves $X \to \D^*$ of genus $g$ is a complex manifold $X$ along with a projective holomorphic submersion $X \to \D^*$ such that the fibers are smooth compact connected complex curves of genus $g$. We also assume that our family of curves is meromorphic at the origin i.e. $X \subset \P^N \times \D^*$  is cut out by polynomials whose coefficients are holomorphic functions on $\D^*$ and meromorphic at the origin. 

A  \emph{model} of  $X$ is a normal complex analytic space $\X$ along with a projective flat holomorphic map $\X \to \D$ such that $\X|_{\D^*} = X$ as complex analytic spaces over $\D^*$. Let $\X_0$ denote the central fiber of $\X$. Note that $\X_0$ will always be connected \cite[Corollary 8.3.6]{Liu02}. 
A model $\X$ is said to be an \emph{snc model} of $X$ if $\X$ is regular and $\X_{0,\red}$ is an snc divisor on $\X$. 

Given two snc models $\X'$ and $\X$ of $X$, we say that $\X'$ dominates $\X$ if there is a proper map $q\colon \X' \to \X$ such that $q|_{X}$ is the identity map. Note that $q$ is a bimeromorphic map between $\X'$ and $\X$.  

Let $B = b_1B_1 + \dots + b_rB_r$ be a horizontal divisor on $X$. After shrinking the base disk, we may assume that $B_i \cap B_j = \emptyset$ for $i \neq j$. Let $\overline{B}$ denote the component-wise closure of $B$ in $\X$. We say that $\X$ is an snc model of $(X,B)$ if $\X$ is regular and $(\X_0 + \B)_{\red}$ is an snc divisor on $\X$.

Let $m \geq 2$ be a positive integer. Suppose that $b_i < m$ for all $i$. Further assume that $\deg(B|_{X_t}) \geq 1$ if $g=1$ and $\deg(B|_{X_t}) \geq 2m$ if $g=0$. Throughout this paper, we will only be working with such pairs. Note that in this case $(X,B_\red)$ is stable in the sense of \cite{DM69} i.e.~$2g-2 + \deg(B_\red) > 0$. 

A theorem of Deligne and Mumford guarantees that if $(X,B_\red)$ is a stable pair, then after a base change $\D^* \to \D^*$ given by $u \mapsto u^N$, there exists an snc model $\X'$ of $(X,B)$ such that $\X'_0$ is reduced. Such an $\X'$ is called a \emph{semistable model} and we will assume that all our families have a semistable model. If $(X,B)$ has a semistable model, there exists a unique minimal one. Here, minimality means that any other semistable model is obtained by applying a sequence of blowups to the minimal semistable model. We can get the minimal semistable model by considering the stable reduction of $(X,B_\red)$ and then resolving the singularities by blowing up (see \cite[Chapter X.4]{ACG11}). 

Let $\X'$ denote the minimal semistable model of $(X,B)$. We will mostly work with the model $\X$ that is obtained from $\X'$ by repeatedly contracting the $(-1)$-curves $E$ in the central fiber for which $\deg(\B|_{E}) < m$. We will call $\X$ as the \emph{minimal snc model} of $(X,\frac{1}{m}B)$. The choice of notation is due to the fact that the model only depends on the $\Q$-divisor $\frac{1}{m}B$, in the sense that the minimal snc model of $(X,\frac{1}{m}B)$ is the same as that of $(X,\frac{1}{km}kB)$ for any positive integer $k$. Note that $\X_0 + \B_\red$ is no longer an snc divisor; however this is not a major problem as $\X_0$ is still a (reduced) snc divisor. Moreover, $\B$ does not pass through any nodal points of $\X_0$ -- this is due to the fact that the image of a $(-1)$-curve $E$ in $\X'$ under the map $\X' \to \X''$, obtained by contracting $E$, is a smooth point in $\X_0''$. For an example, see Figure \ref{figMinSNCModel}.

\begin{figure}
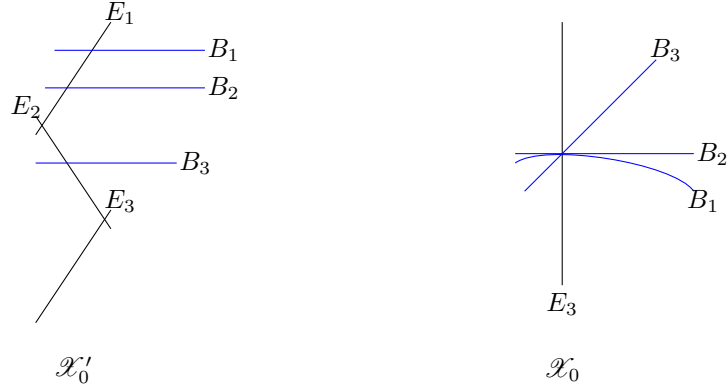

\tikzfig{minimalSNCModel}
  \caption{The above figure shows the minimal snc model, $\X$, for $(X,\frac{1}{4}B)$, where $B = B_1 + B_2 + B_3$ and $\X_0' = E_1 + E_2 + E_3$. The model $\X$ is obtained by first contracting $E_1$ and then $E_2$ from the minimal semistable model, $\X'$ , of $(X,B)$.}
\label{figMinSNCModel}  
\end{figure}

Let $\X$ be the minimal snc model of $(X,\frac{1}{m}B)$. For any irreducible component $E \subset \X_0$, let $\val(E)$ denote the number of intersection points of $E$ with the rest of $\X_0$ i.e.~$\val(E) = \#(E \cap \overline{(\X_0 \setminus E)})$. Any irreducible component $E \subset \X_0$ is of one of the following forms:
\begin{itemize}
\item $g(E) \geq 2$;
\item $g(E) = 1$ and either $\val(E) \geq 1$ or $\deg(\B|_E) \geq 1$;
\item $g(E) = 0$ and $\val(E) \geq 2$;
\item $g(E) = 0$, $\val(E) = 1$ and $\deg(\B|_E) \geq m$; or
\item $g(E) = 0$, $\val(E) = 0$ and $\deg(B|_{E}) \geq 2m$.
\end{itemize}


Note that all irreducible components $E \subset \X_0$ satisfy $2g(E)-2+\val(E)+\frac{\deg(\B|_E)}{m} \geq 0$. If $g(E) = 0, \val(E) = 2$ and $\deg(\overline{B}|_{E}) = 0$, we call $E$ as \emph{inessential}, otherwise we call it as \emph{essential}. If $B=0$, the essential irreducible components of $\X_0$ are exactly those that show up in the central fiber of the stable reduction of $X$ in the sense of \cite{DM69} i.e.~when $B = 0$, the inessential components correspond to $(-2)$-curves which can be contracted to obtain the stable reduction of $X$.

\subsection{Dual graph and the stable dual graph of a model}
\label{subsecDualGraph}
Let $(X,B)$ be a pair as from the previous section. 
Given an snc model $\X'$ of $X$, the \emph{dual graph} $\Gamma_{\X'}$ is a graph whose vertices correspond to irreducible components on $\X'_0$ and edges correspond to the nodes in $\X'_0$. Note that $\Gamma_{\X'}$ is allowed to have multiple edges between a pair of vertices, but no loops are allowed. Note that $\Gamma_{\X'}$ is connected because $\X_0'$ is connected. Associated to each vertex $v_E \in V(\Gamma_{\X'})$, we keep track of two numbers: the genus $g(v_E) = g(E)$ and the valency $\val(v_E) = \val(E)$.

We define the length of an edge $e$ between $v_{E_1}$ and $v_{E_2}$ by
$$l_e = \frac{1}{\mult_{\X'_0}(E_1) \cdot \mult_{\X'_0}(E_2)}.$$
In particular, when $\X'_0$ is reduced, all edges in $\Gamma_{\X'}$ have length 1. 

Now let $\X$ be the minimal snc model of $(X,\frac{1}{m}B)$. We call a vertex $v_E \in V(\Gamma_\X)$ inessential (respectively essential) if $E$ is inessential (respectively essential).

 Let $v_0, v_N \in V(\Gamma_\X)$ for some $N \geq 1$.  We define an \emph{inessential chain} between $v_0, v_N$ to be a sequence of vertices $v_0,\dots,v_N \in V(\Gamma_{\X})$ such that there is an edge between $v_{i-1}$ and $v_i$ for $1 \leq i \leq n$ and $v_1,\dots,v_{N-1}$ are inessential. Such a chain is said to be \emph{maximal} if $v_0$ and $v_N$ are essential. Note that we do allow $v_0 = v_N$.  

 The \emph{stable dual graph} of $(X,\frac{1}{m}B)$, denoted $\widetilde{\Gamma}$, is the graph obtained from $\Gamma_{\X}$ by forgetting  the inessential vertices. The lengths of the edges of $\tilde{\Gamma}$ are such that $\tilde{\Gamma}$ and $\Gamma_\X$ are isometric as metric graphs. For an example, see Figure \ref{figStableDualGraph}.

 \begin{figure}
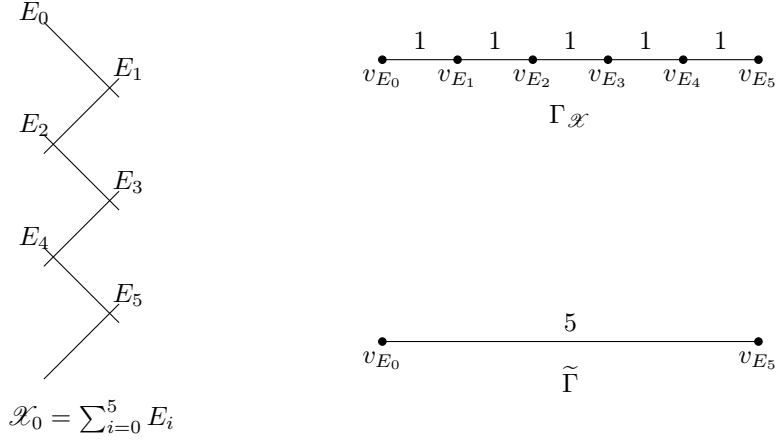

   \tikzfig{dualGraph}
  \caption{The above figure shows the stable dual graph in the case when central fiber of the minimal snc model $\X$ of $(X,0)$ is given by $\X_0 = E_0+\dots+E_5$ such that $g(E_0) = g(E_5) = 2$ and $g(E_1) = g(E_2) = g(E_3) = g(E_4) = 0$.}
  \label{figStableDualGraph}
\end{figure}

Thus, $V(\tilde{\Gamma})$ is exactly the set of essential vertices of $\Gamma_{\X}$ and an edge in $\tilde{\Gamma}$ corresponds to a maximal inessential chains in  $\Gamma_\X$.

Even though, it is suppressed in the notation, $\tilde{\Gamma}$ depends on the choice of the integer $m$ and the divisor $B$. Note that if $B=0$, then $\tilde{\Gamma}$ is just the dual graph of the stable reduction of $X$ in the sense of \cite{DM69}.

\subsection{The Narasimhan--Simha measure}
\label{subsecPluriBergman}
Let $Y$ be a compact Riemann surface of genus $g$, and let $P_1,\dots,P_r$ be distinct points on $Y$ for some $r \geq 0$. We allow $g = 0$, provided that $r \geq 3$. 

Pick an integer $m \geq 1$ and let $0 < a_1,\dots,a_r < m$ be integers. If $g=0$, we also require that $a_1 + \dots + a_r \geq 2m$. This ensures that $h^0(Y,mK_Y + a_1P_1 + \dots + a_rP_r) > 0$.

Given $\theta, \vartheta \in H^0(Y,mK_Y + a_1P_1 + \dots + a_rP_r)$, we define a volume form $|\theta \wedge \overline{\vartheta} |^{1/m}$ on $Y$ as follows.
Locally if $\theta(z) = f(z)dz^{\otimes m}$ and $\vartheta(z) = g(z)dz^{\otimes m}$, where $f$ and $g$ are local meromorphic functions on $Y$ with poles of order at worst $a_i$ at $P_i$, then, $|\theta \wedge \overline{\vartheta}|^{1/m} = |f(z) \overline{g(z)}|^{1/m} (\frac{i}{2}dz \wedge d\overline{z})$. Also denote $|\theta|^{2/m} := |\theta \wedge \overline{\theta}|^{1/m}$.

Now, we define a continuous function $ \| \cdot \|'_{Y} : H^0(Y,mK_Y + a_1P_1 + \dots + a_rP_r) \to \R_{\geq 0}$ as follows:
$$ \| \theta \|'_{Y} := \left(\int_Y |\theta|^{2/m} \right)^{m/2}.$$

The assumption $a_i < m$ ensures that the integral converges. Note that $\| \cdot \|'_{Y}$ satisfies the following properties (see \cite{NS68} for details):
\begin{itemize}
\item $\| \theta \|'_{Y} = 0 \iff \theta = 0;$
\item $\| \lambda \theta \|'_{Y} = |\lambda| \| \theta \|'_Y \text{ for } \lambda \in \C;$ and
\item $ \{\theta \mid  \|\theta\|'_{Y} = 1 \}$ is a compact subset of $H^0(mK_Y + a_1P_1 + \dots + a_rP_r)$.
\item $\| \cdot \|'_Y$ is not a norm if $m > 1$. 
\end{itemize}

The \emph{Narasimhan--Simha volume form} associated to the line bundle $mK_Y + a_1P_1 + \dots + a_rP_r$ is the continuous positive volume form on $Y$ defined by
$$ \tau^{(m,a_1P_1+\dots+a_rP_r)}(z) = \sup_{ \| \theta \|'_Y = 1} |\theta|^{2/m}(z),$$
where the supremum is over $\{ \theta \in H^0(mK_Y + a_1P_1 + \dots + a_rP_r)   \mid \| \theta \|'_Y = 1 \}$. Since $|\theta|^{2/m}(z)$ is a real cotangent vector at $z$, it lies in ordered set $\R_{\geq 0} \cup \{\infty\}$ and the supremum makes sense. Since the supremum is over the compact set $\{ \theta \mid \| \theta \|'_Y = 1 \}$, the supremum is indeed a maximum. If no confusion arises, we skip the superscript and just use denote $\tau$ to denote $\tau^{(m,a_1P_1+\dots+a_rP_r)}$. We could think of $\tau$ as a continuous section of the real line bundle $|K_Y|^2 \otimes |\O_Y(P_1)|^{2a_1/m} \otimes \dots \otimes |\O_Y(P_r)|^{2a_r/m}$ on $Y$.

If $m \geq 2$, then note that $mK_Y + a_1 P_1 + \dots + a_rP_r$ is base point free and thus, $\tau$ does not vanish anywhere on $Y$.
Also note that $\tau(z) < \infty$ if $z \in Y \setminus \{ P_1,\dots,P_r \}$ and that the total mass of $\tau$ is finite, but does not seem to be easy to calculate. Since the line bundle $mK_Y + a_1 P_1 + \dots + a_rP_r$ is base point free, given a $P_i$, there exists a global section of $mK_Y + a_1 P_1 + \dots + a_rP_r$ that looks locally like $z^{-a_i}dz^{\otimes m}$ near $P_i$. Thus, locally near $P_i$, $\tau$ is given by $\phi \cdot |z|^{-2a_i/m}(\frac{i}{2}dz \wedge d\zbar)$, where $\phi$ is a continuous function in a neighborhood of $P_i$.

Note that if $f : Y \to Y$ is a biholomorphism fixing $P_1,\dots,P_r$, then the pushforward measure $f_*\tau$ is equal to $\tau$ i.e.~$\tau$ is invariant under the action of an automorphism of the marked curve $(Y;P_1,\dots,P_r)$. 

\subsection{The pluri-Bergman measure}
Let the notation be as in the previous subsection. We define a Hermitian pairing on $H^0(mK_Y + a_1P_1 + \dots + a_rP_r)$ as follows \cite{NS68}:
\begin{equation}
\label{eqnGenBergmanPairing}
\langle \theta, \vartheta \rangle = \left(\frac{i}{2}\right)^m \int_Y  \frac{\theta \wedge \overline{\vartheta}}{\tau^{m-1}}.  
\end{equation}

In the above $(\frac{i}{2})^m\frac{\theta \wedge \overline{\vartheta}}{\tau^{m-1}}$ is the (1,1)-form given as follows. If $\theta = f(z)dz^{\otimes m}$, $\vartheta = g(z)dz^{\otimes m}$ and $\tau = h(z)(\frac{i}{2}dz\wedge d\zbar)$ locally for some holomorphic functions $f,g$ and positive real-valued function $h$, then $(\frac{i}{2})^m\frac{\theta \wedge \overline{\vartheta}}{\tau^{m-1}} = \frac{f(z)\overline{g(z)}}{h(z)^{m-1}}(\frac{i}{2}dz\wedge d\zbar)$ locally.
We also use the notation $\frac{|\theta \wedge \overline{\vartheta}|}{\tau^{m-1}}$ to denote $\frac{|f(z)\overline{g(z)}|}{h(z)^{m-1}}(\frac{i}{2}dz\wedge d\zbar)$ locally.

The continuous $(1,1)$-form $\tau$ does not vanish anywhere on $Y$ and thus the integral is well defined and finite. Note that if $m = 1$, then $\tau$ does not play any role and we recover the Hermitian pairing induced by the Bergman metric.

Let $e_1,\dots,e_M$ be an orthonormal basis of $H^0(mK_Y + a_1P_1 + \dots + a_rP_r)$ with respect to the above pairing. Using elementary linear algebra, we see that the $(1,1)$-form 
$$ \mu^{(m,a_1P_1+\dots+a_rP_r)} = \sum_{i=1}^M \frac{|e_i \wedge \overline{e_i}|}{\tau^{m-1}}$$
does not depend on the choice of the orthonormal basis. We call the corresponding Radon measure on $Y$ as the \emph{pluri-Bergman measure} on $Y$ induced by line bundle $mK_Y + a_1P_1 + \dots + a_rP_r$. Whenever there is no confusion regarding the line bundle, we skip the superscript and denote $\mu = \mu^{(m,a_1P_1+\dots+a_rP_r)}$. It is also given by the formula

$$ \mu = \sup \left\{ \frac{|\theta \wedge \overline{\theta}|}{\tau^{m-1}} \ \Big| \ \theta \in H^0(mK_Y + a_1P_1 + \dots + a_rP_r), \int_{Y}  \frac{|\theta \wedge \overline{\theta}|}{\tau^{m-1}} = 1 \right\},$$
which is proved using the same arguments as in the proof of Propositions 1.1 and 1.2 of \cite{Ber10}. It also follows from this description that in the case when $m=1$ and $B=0$, $\mu^{(1)}$ is the same as $\tau^{(1)}$ and is the Bergman measure on $Y$. 

Since $\tau$ does not vanish on $Y$, $\mu(z)$ is finite for all points $ z \in Y \setminus \{ P_1,\dots,P_r \}$. From the second description of $\mu$, it follows that $\mu$ is nowhere vanishing. By using the fact that there is a global section of $mK_Y + a_1P_1 + \dots + a_rP_r$ that looks like $z^{-a_i}dz^{\otimes m}$ near $P_i$ and that $\tau \sim C_1|z|^{-2a_i/m}(\frac{i}{2} dz \wedge d\zbar)$ near $P_i$, we conclude that $\mu \sim C_2|z|^{-2a_i/m}(\frac{i}{2} dz \wedge d\zbar)$ near $P_i$. Note that the total mass of $\mu$ is just $h^0(mK_Y + a_1P_1 + \dots + a_rP_r)$.

\subsection{The dualizing sheaf and its tensor powers}
\label{subsecDualizingSheaf}
Let $\X'$ denote an snc model of $X$ such that $\X_0'$ is reduced. Then, the exists a dualizing sheaf $\omega_{\X'_0} = \omega_{\X'}(\X'_0)|_{\X'_0}$ on $\X'_0$. 

Let $E_1$ and $E_2$ be irreducible components of $\X_0'$. A local section of $\omega_{\X_0'}$ near a node $P = E_1 \cap E_2$ is the data of meromorphic one-forms $f_1,f_2$ on $E_1$ and $E_2$, respectively, with at worst simple-pole along $P$, such that the residues of $f_1$ and $f_2$ at $P$ sum to 0 \cite[Section I]{DM69}.

Using this local description of sections of $\omega_{\X'_{0}}$ , we also get the following description of local sections of $\omega_{\X'_{0}}^{\otimes k}$, where $k$ is an integer (possibly negative). If $k \geq 1$, let us denote $dz^{\otimes k} = dz \otimes \dots \otimes dz$; if $k$ is negative, we can think of $dz^{\otimes k}$ as a formal symbol satisfying the appropriate change of coordinates. Then, a local section $\theta$ of $\omega_{\X'_{0}}^{\otimes k}$ near $P = E_1 \cap E_2$ is just given by the data of two meromorphic $k$-canonical forms $f(z)dz^{\otimes k}$ and $g(w)dw^{ \otimes k}$ locally on $E_2$ and $E_1$ near $P$, respectively, such that
\begin{itemize}
\item  $f$ and $g$ can have at worst poles of order $k$ at the origin. (When $k$ is negative, this means that $f$ and $g$ vanish to order at least $-k$ at the origin)
\item If we write $f(z) = \sum_{n \geq -k}a_{n}z^n$ and $g(w) = \sum_{n \geq -k}b_nw^{n}$ locally around $P$, then $$a_{-k} +  (-1)^{k+1}b_{-k} = 0.$$
We call $a_{-k}$ and $b_{-k}$ the \emph{residues} of $\theta$ at $P$ along $E_2$ and $E_1$ respectively. 
\end{itemize}

Let $E_1,\dots,E_s$ denote the irreducible components of $\X'_{0}$. Let $P_{1}^{(i)},\dots,P_{r_i}^{(i)}$ denote the nodal points of $\X'_{0}$ that lie in $E_i$.

Now pick an orientation on $\Gamma_{\X'}$ i.e.~for every edge $e \in E(\Gamma_{\X'})$, we pick a direction. Let $e^{-}$ and $e^{+}$ denote the initial and final vertex of $e$ with respect to the chosen orientation. Summarizing the above discussion, we have a short exact sequence of sheaves on $\X'_{0}$. 
\begin{multline}
\label{eqnSESOmegaN0}
0 \to \omega_{\X_{0}}^{\otimes k} \to \bigoplus_i \O_{E_i}(kK_{E_i} + kP^{(i)}_1 + \dots + kP^{(i)}_{r_i}) \\ \to \bigoplus_{P \in \X_0 \text{ node }}\C(P) \to 0,
\end{multline}
where the first map is given by the restrictions and the second map is given by   taking the sum (respectively difference) of residues if $k$ is odd (respectively even) i.e.~this map is given by $(\psi_i)_{i} \mapsto (\res_{P}(\psi_{e^+_P}) + (-1)^{k+1} \res_{P}(\psi_{e^-_P}))_P$. 

The following short exact sequence will also be useful. 
\begin{multline}
  \label{eqnSESOmegaN20}
  0 \to \bigoplus_i \O_{E_i}(kK_{E_i} + (k-1)P^{(i)}_1 + \dots + (k-1)P^{(i)}_{r_i}) \\ \to \omega_{\X_{0}}^{\otimes k} \to  \bigoplus_{P \in \X_0 \text{ node }}\C(P) \to 0,
\end{multline}
where the first map exists because all the residues of sections of the left term are zero, so there is no compatibility of residues to be checked. The second map is taking the residue at each node $P$ along the irreducible component associated to $e_P^-$.

\section{Metrized curve complex hybrid space}
\label{secMCCHS}
We describe the metrized curve complex and the associated hybrid space in this section. See \cite{AB15} and \cite[Section 7]{Shi20A} for more details. In this section $\X$ will denote an arbitrary snc model of $X$. We do not assume that $\X$ is semistable and we will not keep track of the divisor $B$. 

\subsection{Metrized curve complex}
Let $\tilde{\X_0}$ denote the normalization of $\X_0$ i.e.~the disjoint union of all irreducible components of $\X_0$. The \emph{metrized curve complex} $\Delta_\CC(\X)$ associated to $\X$ is a topological space defined as follows.
$$ \Delta_\CC(\X) = \left(\tilde{\X_0} \sqcup \bigsqcup_{e \in E(\Gamma_\X)}[0,l_e]\right) \Big/\sim, $$
where $\sim$ is the identification of the end points $[0,l_{e}]$ with the corresponding points that lie over the nodal point associated to $e$. Recall that $l_e$ is the length of the edge $e \in E(\Gamma_\X)$. See Figure \ref{figExampleGenus4Family} for an example.

\subsection{Metrized curve complex hybrid space}
As a set, the metrized curve complex hybrid space, $\X^\hyb_\CC$, is given by
$$ \X^\hyb_\CC = X \sqcup \Delta_\CC(\X).$$
We also have a map $\X^\hyb_\CC \to \D$ given by extending the map $X \to \D^*$ and sending $\Delta_\CC(\X)$ to the origin. This map will turn out to be continuous in the topology on $\X^\hyb_{\CC}$. Before we describe the topology on $\X_\CC^\hyb$, we make a few definitions. 

First, consider a point $Q \in E$, where $E \subset \X_0$ is an irreducible component of multiplicity $a$ such that $Q$ is not a nodal point. We can find an open set $U_1 \subset \X$ containing $Q$ with coordinates $z,w$ on $U_1$ such that $U_1 \cap \X_0 = U_1 \cap E$, $|z|,|w| < 1$ and the map to $\D$ is given by $(z,w) \mapsto z^a$. We say that $(U_1,z,w)$ is a coordinate chart \emph{adapted} to the irreducible component $E$ and centered at $Q$.

Now, consider a nodal point $P = E_1 \cap E_2$ in $\X_0$, where $E_1,E_2 \subset \X_0$ are irreducible components of multiplicity $a,b$ respectively. Then, we can find an open chart $U_2$ and coordinates $z,w$ on $U_2$ such that $U_2 \cap \X_0 = U_2 \cap (E_1 \cup E_2)$, $|z|,|w| < 1$ and the map to $\D$ is given by $(z,w) \mapsto z^aw^b$. We say that $(U_2,z,w)$ is adapted to the node $P = E_1 \cap E_2$. 

To describe the topology on $\X^\hyb_\CC$, it is enough to describe the neighborhood basis of each point.  
\begin{itemize}
\item Firstly, we require that $X \to \X_\CC^\hyb$ is an open immersion. This describes the neighborhood basis of points in $X$.
\item Pick a point $Q \in E$, where $E \subset \X_0$ is an irreducible component and $Q$ is not a nodal point on $\X_0$. Let $(U_1,z,w)$ be a coordinate chart adapted to $E$ and centered at $Q$. Viewing $U_1$ as a subset of $\X^\hyb_\CC$, we get a neighborhood of $Q$. By shrinking such adapted coordinate charts, we get a neighborhood basis of $Q$.
\item Pick a point $Q \in e_P$, where $P = E_1 \cap E_2$ is a nodal point in $\X_0$, where $E_1,E_2$ have multiplicities $a,b$ respectively such that $Q$ does not lie in $\tilde{\X}_0$. Identify $e_P$ with $[0,\frac{1}{ab}]$, where $0$ gets identified with $v_{E_1}$ and $\frac{1}{ab}$ with $v_{E_2}$. 
  Let $(U_2,z,w)$ be a coordinate chart adapted to the node $P = E_1 \cap E_2$.  Pick $\alpha,\beta$ so that $0 < \alpha < Q < \beta < \frac{1}{ab}$. Then,
  $$ \left\{ (z,w) \in U_2 \mid \alpha < \frac{\log|w|}{a\log|z^aw^b|} < \beta \right\} \cup (\alpha,\beta)$$
  is a neighborhood of $Q$. Shrinking $U_2$ and letting $\alpha,\beta \to Q$, we get a neighborhood basis of $Q$.
  \item Pick a point $Q = e_P \cap E_1$, where $P = E_1 \cap E_2$ is a nodal point in $\X_0$, where $E_1,E_2$ have multiplicities $a,b$ respectively. Identify $e_P$ with $[0,\frac{1}{ab}]$, where $0$ gets identified with $v_{E_1}$ and $\frac{1}{ab}$ with $v_{E_2}$. 
  Let $(U_2,z,w)$ be a coordinate chart adapted to the node $P = E_1 \cap E_2$.  Pick  $0 < \epsilon \ll \frac{1}{ab}$. Then,
  $$ \left\{ (z,w) \in U_2 \mid  \frac{\log|w|}{a\log|z^aw^b|} < \epsilon \right\} \cup (U_2 \cap E_1) \cup [0,\epsilon)$$
 is a neighborhood of $Q$. Shrinking $U_2$ and letting $\epsilon \to 0$, we get a neighborhood basis of $Q$. 
\end{itemize}

Alternatively, we can define the topology on $\X^\hyb_\CC$ to be the coarsest topology for which the maps $\X^\hyb_\CC \to \X$ and $\X^\hyb_\CC \to \X^\hyb$ are continuous, where $\X^\hyb = X \cup \Gamma_\X$ denotes the Boucksom-Jonsson hybrid space \cite[Section 7]{Shi20A}. 

\begin{lem}
If $\X,\X'$ are models of $X$ such that $\X'$ dominates $\X$ i.e.~a proper map $\X' \to \X$ which restricts to identity on $X$, then there exists a unique continuous surjective map $(\X')^\hyb_\CC \to \X^\hyb_\CC$ that restricts to identity on $X$. 
\end{lem}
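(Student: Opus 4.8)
The plan is to build the map $(\X')^\hyb_\CC \to \X^\hyb_\CC$ piece by piece, matching the set-theoretic decomposition $(\X')^\hyb_\CC = X \sqcup \Delta_\CC(\X')$ against $\X^\hyb_\CC = X \sqcup \Delta_\CC(\X)$, and then check continuity locally using the adapted coordinate charts introduced above. On the open part $X$ the map is forced to be the identity, so the only real content is (i) producing a continuous map $\Delta_\CC(\X') \to \Delta_\CC(\X)$ compatible with the dominance map $q\colon \X' \to \X$, and (ii) verifying continuity of the resulting map on $\X^\hyb_\CC$ at points of the central fiber. Uniqueness is immediate: a continuous map agreeing with the identity on the dense subset $X$ is determined on all of $(\X')^\hyb_\CC$, since $X$ is dense (every neighborhood of a central-fiber point meets $X$, by the explicit neighborhood bases).

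First I would construct the map on central fibers. Since $q$ restricts to the identity on $X$, it is a composition of blow-ups at points of central fibers (this is the standard structure of a dominance morphism of snc models of a family of curves over a disk); so it suffices to treat the case where $q$ is a single blow-up at a point $Q \in \X_0$, the general case following by composition. There are two cases for $Q$: a smooth point of an irreducible component $E$ of multiplicity $a$, or a node $P = E_1 \cap E_2$ with multiplicities $a, b$. In the first case the exceptional curve $E'$ has multiplicity $a$ and meets the strict transform of $E$ in one node of length $1/(a^2)$; on $\Delta_\CC$, the effect is to take the point of $\tilde{\X_0}$ over $Q$ and blow it up into (a point of $E'$, glued along an edge of length $1/a^2$). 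The collapsing map $\Delta_\CC(\X') \to \Delta_\CC(\X)$ sends all of the new $E'$ and the new edge back to the single point over $Q$, and is the obvious identification elsewhere. In the node case, $E'$ has multiplicity $a+b$, the edge $e_P$ of length $1/(ab)$ is subdivided into two edges of lengths $1/(a(a+b))$ and $1/(b(a+b))$ (note $\tfrac{1}{a(a+b)} + \tfrac{1}{b(a+b)} = \tfrac{1}{ab}$, so lengths match), and the collapsing map identifies $E'$ to the interior subdivision point of $e_P$ and is an isometry on each edge-segment. In both cases one gets a well-defined continuous surjection $\Delta_\CC(\X') \to \Delta_\CC(\X)$; surjectivity is clear since every component and edge of $\X_0$ is hit.

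Next I would glue: define $\Phi\colon (\X')^\hyb_\CC \to \X^\hyb_\CC$ as the identity on $X$ and as the collapsing map on $\Delta_\CC(\X')$. To check continuity it suffices to check it at each point $Q' \in \Delta_\CC(\X')$, using the neighborhood bases from the three bullet points above. For $Q'$ a non-nodal point of a component of $\X'_0$, pick a coordinate chart $(U_1', z, w)$ adapted to that component; its image under $q$ is contained in an adapted chart (or in an adapted node-chart if $q$ blew down onto that point) for $\X$, and the defining inequalities of the neighborhood bases are preserved because the relevant ratios $\tfrac{\log|w|}{a\log|z^aw^b|}$ transform predictably under the monomial coordinate changes of a blow-up; the edge-coordinate $\alpha,\beta$ inequalities likewise pull back to the $X$-part consistently. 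The case $Q'$ in the interior or endpoint of an edge is handled the same way, comparing the adapted node-charts on $\X'$ and $\X$. The key computational lemma underlying all of this is that under the blow-up chart change — say $z = z_1$, $w = z_1 w_1$ near the exceptional divisor — one has $\log|z^a w^b| = \log|z_1^{a+b} w_1^b|$ and $\tfrac{\log|w|}{a\log|z^aw^b|}$ is a monotone function of $\tfrac{\log|w_1|}{(a+b)\log|z_1^{a+b}w_1^b|}$, which is exactly what makes the neighborhood bases align.

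The main obstacle I expect is the bookkeeping in step (ii): making the correspondence between the monomial coordinate charts on $\X'$ (adapted to exceptional and strict-transform components, with their various multiplicities) and those on $\X$ completely explicit, and verifying that the piecewise-linear edge coordinates $\tfrac{\log|w|}{a\log|z^aw^b|}$ transform into the edge coordinates on $\X$ in a way that sends basic open sets into basic open sets in both directions needed for continuity. This is routine but fiddly; it can be organized cleanly by reducing once and for all to a single blow-up (as above) and then checking the handful of cases (blow-up at a smooth point of a component; blow-up at a node), using that the image of an exceptional $(-1)$-curve under contraction is a smooth point, as noted in Section~\ref{subsecModels}. Alternatively — and this is probably the slickest route — one can invoke the last sentence of Section~\ref{secMCCHS}: the topology on $\X^\hyb_\CC$ is the coarsest for which $\X^\hyb_\CC \to \X$ and $\X^\hyb_\CC \to \X^\hyb$ are continuous. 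Granting the analogous fact for $\X'$, it then suffices to produce continuous maps $(\X')^\hyb_\CC \to \X$ and $(\X')^\hyb_\CC \to \X^\hyb$ compatible with $\Phi$: the first is the composite $(\X')^\hyb_\CC \to \X' \xrightarrow{q} \X$, and the second is the composite $(\X')^\hyb_\CC \to (\X')^\hyb \to \X^\hyb$, where the last arrow is the known contraction map on Boucksom--Jonsson hybrid spaces (e.g. from \cite[Section 7]{Shi20A}). Continuity of $\Phi$ is then automatic from the universal property, and one only needs to check that these two composites factor through $\Phi$ set-theoretically, which is the elementary verification on central fibers already carried out in the first step. I would present the argument via this universal-property route, relegating the explicit chart computation to a brief remark.
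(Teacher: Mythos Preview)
Your proposal is correct and, in its recommended form, is essentially identical to the paper's proof: reduce to a single blow-up, describe the collapsing map on central fibers in the smooth-point and node cases (with the same edge-length bookkeeping), and deduce continuity from the coarsest-topology description by factoring through $(\X')^\hyb_\CC \to \X' \to \X$ and $(\X')^\hyb_\CC \to (\X')^\hyb \to \X^\hyb$. The paper skips your alternative chart-by-chart verification entirely and goes straight to the universal-property argument you end up advocating.
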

\begin{proof}
  Such a map $\X' \to \X$ is given by a composition of blowups along closed points in the central fiber \cite[Theorem 1.15]{Lic68}, we may reduce to the case when $\X' \to \X$ is obtained by a single blowup. If $\X' \to \X$ is obtained by blowing up a smooth point in $\X_0$, then we get a map $\Delta_\CC(\X') \to \Delta_\CC(\X)$ obtained by collapsing the extra edge and curve in $\Delta_\CC(\X')$ to the center of the blowup.

  If $\X' \to \X$ is obtained by blowing up a nodal point in $\X_0$, then $\Delta_\CC(\X') \to \Delta_\CC(\X)$ is obtained by collapsing the extra curve in $\Delta_\CC(\X')$  to the corresponding point in $\Gamma_\X$. More precisely, suppose $\X'$ is obtained by blowing up $P = E_1 \cap E_2$, where $E_1$ and $E_2$ are irreducible components of multiplicity $a$ and $b$ respectively and let $E$ be the exceptional divisor of the blowup. We can identify $e_{P} \simeq [0,\frac{1}{ab}]$, where $v_{E_1}$ is identified with 0 and $v_{E_2}$ is identified with $\frac{1}{ab}$. The exceptional curve $E$ is collapsed to the point $\frac{1}{a(a+b)} \in e_P$ and the edges $e_{E_1 \cap E}$ and $e_{E_2 \cap E}$ are identified with $[0,\frac{1}{a(a+b)}]$ and $[\frac{1}{a(a+b)}, \frac{1}{ab}]$. 

  In both cases, we get a  continuous surjective map $\Delta_\CC(\X') \to \Delta_\CC(\X)$, which gives rise to a surjective map $(\X')^\hyb_\CC \to \X^\hyb_\CC$ 
  
To show that this map  is continuous, it is enough to note that the compositions $(\X')_\CC^\hyb \to \X' \to \X$ and $(\X')^\hyb_\CC \to (\X')^\hyb \to \X^\hyb$ are continuous and that these compositions are the same as the compositions $(\X')_\CC^\hyb \to \X^\hyb_\CC \to \X$ and $(\X')^\hyb_\CC \to \X_\CC^\hyb \to \X^\hyb$.  
\end{proof}

\subsection{Convergence of measures on the metrized curve  complex hybrid space}
We outline some general techniques that will be used to prove Theorem \ref{mainThmA} and \ref{mainThmB}. 
\begin{lem}
\label{lemConvergenceMCCHC}
Let $\X$ be an snc model of $X$. Let $(\nu_t)_{t \in \D^*}$ be a family of Radon measures on $X$ with the support of $\nu_t$ contained in $X_t$ and such that $\limsup_{t \to 0} \nu_t(X_t) < \infty$. Let $\nu_0$ be a Radon measure on $\Delta_\CC(\X)$. To show that $\nu_t \to \nu_0$ weakly as measures on $\X^\hyb_\CC$, it is enough to prove the following.
\begin{enumerate}
\item Let $(U_1,z,w)$ be a coordinate chart adapted to an irreducible component $E \subset \X_0$ and let $f$ be a continuous function on $U_1$ with compact support. Then,
  $$\int_{U_1 \cap X_t} f \nu_t \to \int_{U_1 \cap E} f \nu_0$$
  as $t \to 0$.
\item Let $(U_2,z,w)$ be a coordinate chart adapted to a node $P = E_1 \cap E_2$, where $E_1, E_2$ have multiplicities $a,b$ in $\X_0$ respectively. Let $0 < \alpha < \beta \leq \frac{1}{2ab}$ and let $f$ be a continuous function on $[0,\frac{1}{ab}] \simeq e_P$. Then,
  $$\int_{\{w \in U_2 \mid \alpha \leq \frac{\log|w|}{a\log|t|} \leq \beta\}} f\left(\frac{\log|w|}{a \log|t|} \right) \nu_t \to \int_{[\alpha,\beta]} f\nu_0$$
  as $t \to 0$.
\item Let $(U_2,z,w)$ be a coordinate chart adapted to a node $P = E_1 \cap E_2$, where $E_1, E_2$ have multiplicities $a,b$ in $\X_0$ respectively. Let $0 < \epsilon < \frac{1}{2ab}$ and identify $e_P \simeq [0,\frac{1}{ab}]$. Let $$D_\epsilon = \{ (w,u) \in \D \times [0,\epsilon) \mid \text{Either } w = 0 \text{ or } u = 0 \}$$ and let $r : \D \times [0,\epsilon) \to D_\epsilon$ be a strong deformation retract. We can identify $D_\epsilon$ with $(U_2 \cap E_1) \cup [0,\epsilon)$. Let $f$ be a compactly supported continuous function on $D_\epsilon$. Then,
  $$ \int_{\{w \in U_2 \mid \frac{\log|w|}{a\log|t|} < \epsilon \}} f\left(r\left( w,\frac{\log|w|}{a\log|t|}\right)\right) \nu_t \to \int_{D_\epsilon} f \nu_0 $$
  as $t \to 0$.
\end{enumerate}
\end{lem}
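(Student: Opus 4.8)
The statement to prove is Lemma \ref{lemConvergenceMCCHC}, which reduces the weak convergence $\nu_t \to \nu_0$ on $\X^\hyb_\CC$ to the three families of test-integral conditions (1), (2), (3). The plan is as follows.

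\medskip

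\textbf{Plan of proof.} By definition, $\nu_t \to \nu_0$ weakly on $\X^\hyb_\CC$ means $\int_{\X^\hyb_\CC} f \, \nu_t \to \int_{\X^\hyb_\CC} f \, \nu_0$ for every $f \in \mathcal{C}^0_c(\X^\hyb_\CC)$; since the supports $\nu_t(X_t)$ are uniformly bounded (for $t$ near $0$) and $\nu_0$ has finite mass, we may use a standard compactness-plus-density argument. First I would fix a relatively compact open neighborhood $W$ of $\supp(\nu_0) \cap \Delta_\CC(\X)$ in $\X^\hyb_\CC$; it suffices to test against $f$ supported in an arbitrarily small neighborhood of $\Delta_\CC(\X)$, because any $f$ supported in the ``interior part'' $X$ away from the central fiber contributes $0$ to both sides for $t$ small (its support meets only finitely many fibers $X_t$), and a partition of unity lets us localize. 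Then, using the explicit neighborhood basis of $\X^\hyb_\CC$ recalled just before the lemma, I would cover a neighborhood of $\Delta_\CC(\X)$ by finitely many charts of three types: charts $U_1$ adapted to a component $E$ and centered at a non-nodal point; ``tube'' charts $U_2$ adapted to a node $P = E_1\cap E_2$ restricted to an interior segment $\alpha \le \tfrac{\log|w|}{a\log|z^aw^b|} \le \beta$ of the edge $e_P$; and ``end'' charts $U_2$ adapted to a node but truncated near one endpoint $v_{E_1}$ of $e_P$, i.e.\ the sets $\tfrac{\log|w|}{a\log|z^aw^b|} < \epsilon$ together with $(U_2\cap E_1)\cup[0,\epsilon)$. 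Subordinate to such a finite cover, choose a partition of unity $\{\rho_j\}$ so that $\sum_j \rho_j \equiv 1$ on a neighborhood of $\supp f \cap \Delta_\CC(\X)$; then $\int f\,\nu_t = \sum_j \int \rho_j f\,\nu_t + (\text{error on } X \text{ away from } \Delta_\CC(\X))$, and the error vanishes for small $t$. Thus it suffices to handle each $\int \rho_j f \, \nu_t$, and each such term is exactly of the form addressed by (1), (2) or (3), once one observes (crucially) that on $U_2 \cap X_t$ one has $z^a w^b = t$, so $\tfrac{\log|w|}{a\log|z^aw^b|} = \tfrac{\log|w|}{a\log|t|}$, matching the parametrizations in (2) and (3).

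\medskip

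\textbf{Key steps, in order.} (i) Reduce to test functions supported near $\Delta_\CC(\X)$, using uniform mass bounds and that $X \hookrightarrow \X^\hyb_\CC$ is open with each $\supp(\nu_t)\subset X_t$. (ii) Cover a neighborhood of $\Delta_\CC(\X)$ by finitely many charts of the three listed types; choose a subordinate partition of unity. (iii) For a $U_1$-chart term, note $\rho_j f$ is continuous with compact support on $U_1$, so (1) applies directly. (iv) For an interior $U_2$-chart term, note that on $X_t$ the function $\tfrac{\log|w|}{a\log|z^aw^b|}$ equals $\tfrac{\log|w|}{a\log|t|}$ and that $\rho_j f$ restricted to the relevant annular region is, up to a uniformly small perturbation, a function of this single coordinate; approximate $\rho_j f$ from above and below by functions pulled back from $[\alpha,\beta]\subset e_P$ (using uniform continuity of $\rho_j f$ and that the ``transverse'' coordinate shrinks to a point as $t\to0$ on the locus where $\tfrac{\log|w|}{a\log|t|}$ is bounded below), then apply (2) and let the approximation error go to $0$. (v) For an end $U_2$-chart term, use the deformation retract $r\colon \D\times[0,\epsilon)\to D_\epsilon$ to write $\rho_j f$ as (a uniformly small perturbation of) $g\circ r$ for a compactly supported continuous $g$ on $D_\epsilon$, matching the hypothesis of (3); again squeeze and apply (3). (vi) Reassemble: $\int f\,\nu_t = \sum_j \int \rho_j f\,\nu_t \to \sum_j \int (\rho_j f)_0\,\nu_0 = \int f\,\nu_0$, where in the last equality one uses that the $\rho_j$ restrict on $\Delta_\CC(\X)$ to a partition of unity there.

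\medskip

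\textbf{Main obstacle.} The routine parts (reduction to a neighborhood of the central fiber, partitions of unity, reassembly) are standard. The genuine content is step (iv)--(v): showing that an arbitrary continuous $\rho_j f$ on a chart $U_2$, restricted to $X_t$, can be replaced — up to an error that is uniform in $t$ and tends to $0$ — by a function that is constant along the ``transverse'' directions, so that the a priori scalar-valued hypotheses (2), (3) actually suffice. This works because, on the region $\{\tfrac{\log|w|}{a\log|t|}\in[\alpha,\beta]\}$, one has $|w| = |t|^{a\lambda}$ with $\lambda$ bounded below, hence $|w|\to0$ and correspondingly $|z| = |t/w^b|^{1/a}\to 0$, so the chart coordinates $(z,w)$ are squeezed toward the skeleton; continuity of $\rho_j f$ on the compact set $\overline{U_2}$ then gives the required uniform approximation. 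Care is needed near the endpoints of $e_P$ (where one transfers to the adjacent $U_1$-chart or to the $D_\epsilon$ picture), and one must check that the three chart types genuinely exhaust a neighborhood of $\Delta_\CC(\X)$ — but this is precisely the content of the neighborhood-basis description given above, so no new difficulty arises there.
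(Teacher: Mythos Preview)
Your proposal is correct and follows essentially the same approach as the paper: reduce to a neighborhood of $\Delta_\CC(\X)$, cover by finitely many charts of the three types, use a subordinate partition of unity, and handle each term via (1), (2), or (3). The only difference is in your steps (iv)--(v): rather than approximating $\rho_j f$ from above and below by pulled-back functions, the paper simply sets $f = (\rho_j h)|_{e_P}$ (respectively $f = (\rho_j h)|_{D_\epsilon}$) and observes that $\rho_j h - f\bigl(\tfrac{\log|w|}{a\log|t|}\bigr)$ is a compactly supported continuous function on the hybrid space that vanishes on $\Delta_\CC(\X)$, hence is uniformly less than any $\epsilon'$ on $X_t$ for $|t|$ small; combined with the uniform mass bound $\limsup \nu_t(X_t) < \infty$, this gives the approximation in one stroke without any squeezing.
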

\begin{proof}
  Let $h$ be a continuous function in a neighborhood of $\Delta(\X)$. We need to show that $\int_{X_t} h \nu_t \to \int_{\Delta_\CC(\X)} h \nu_0$.
  
  The sets listed in the lemma form the neighborhood basis of points in $\Delta_\CC(\X)$ in $\X_\CC^\hyb$. So, we can cover a neighborhood of $\Delta_\CC(\X)$ using finitely many open sets of these forms. Now consider a partition of unity $\{\chi_i \}_i$ adapted to such a cover. Writing $h = \sum_i \chi_i h$, it is enough to show that $\int \chi_i h \nu_t \to \int \chi_i h \nu_0$ as $t \to 0$. So, we are reduced to the case where $h$ is supported in a set of one of the forms listed above.

  If $h$ is supported in the set listed in (1), then take $f = h$ and  there is nothing to show.

  If $h$ is supported in the set listed in (2), let $f = h|_{e_P}$. Then, $h - f(\frac{\log|w|}{a \log|t|})$ is a compactly supported continuous function which vanishes along $\Delta_\CC(\X_0)$. Thus, given $\epsilon' > 0$, we can find $t_0 > 0$ such that $|h - f(\frac{\log|w|}{a \log|t|})| < \epsilon'$ on $X_t$ for all $|t| < t_0$. Thus, we get that $\int |h - f(\frac{\log|w|}{a \log|t|})| \nu_t < \epsilon' \nu_t(X_t) $. Letting $\epsilon' \to 0$ and using the fact that $\limsup_{t \to 0}\nu_t(X_t) < \infty$, we get that
  $$\int_{\{w \in U_2 \mid \alpha \leq \frac{\log|w|}{a\log|t|} \leq \beta\}} \left|h - f\left(\frac{\log|w|}{a \log|t|}\right)\right| \nu_t \to 0,$$
  and thus
  $$
  \lim_{t \to 0} \int h \nu_t = \lim_{t \to 0} \int_{\{w \in U_2 \mid \alpha \leq \frac{\log|w|}{a\log|t|} \leq \beta\}} f\left(\frac{\log|w|}{a \log|t|}\right) \nu_t = \int_{[\alpha,\beta]}  f \nu_0 = \int h \nu_0.$$

  A similar argument also shows that if $h$ is supported in the set listed in (3), then $\int h \nu_t \to \int h \nu_0$.
\end{proof}
\subsection{Extending convergence to higher models}
Let $\X$ and $\X'$ be models of $X$ such that $\X'$ dominates $\X$. Recall that this gives rise to a unique continuous surjective map ${\X'}_\CC^{\hyb} \to  \X_\CC^\hyb$.

\begin{lem}
\label{lemLimitOnOnlyOneModel}  
Let $\X, \X'$ be snc models of $X$ such that $\X'$ dominates $\X$. Let $(\nu_t)_{t \in \D^*}$ be a family of Radon measures on $X_t$ and $\nu_0$ a Radon measure on $\Delta_\CC(\X)$ such that $\nu_t$ converges weakly to $\nu_0$ on $\X_\CC^\hyb$. Suppose that $\nu_0(\{Q\}) = 0$ for all points $Q \in \Delta_\CC(\X)$. Then there exists a unique measure $\nu_0'$ on $\Delta_\CC(\X')$ such that $\nu_t \to \nu_0'$ weakly on ${(\X')}^\hyb_\CC$.
\end{lem}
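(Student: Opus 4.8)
The plan is to transport the measure $\nu_0$ on $\Delta_\CC(\X)$ to a measure $\nu_0'$ on $\Delta_\CC(\X')$ along a section of the collapsing map $\pi\colon \Delta_\CC(\X') \to \Delta_\CC(\X)$, and then verify the convergence $\nu_t \to \nu_0'$ chart-by-chart using Lemma \ref{lemConvergenceMCCHC}. First I would reduce, exactly as in the proof of the previous lemma, to the case where $\X' \to \X$ is a single blowup at a closed point $Q$ of $\X_0$, since a general domination is a composition of such blowups and the construction is functorial. In the blowup case, the map $\pi$ collapses either one extra edge-and-curve (if $Q$ is a smooth point) or one extra curve (if $Q$ is a node) onto a single point of $\Delta_\CC(\X)$; call this point $Q_0$. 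Because $\nu_0(\{Q_0\}) = 0$ by hypothesis, the fiber $\pi^{-1}(Q_0)$ is $\nu_0$-null once we push forward, so we may define $\nu_0' := s_* \nu_0$ where $s\colon \Delta_\CC(\X) \to \Delta_\CC(\X')$ is any continuous section of $\pi$ away from $Q_0$; the choice of $s$ does not matter for the resulting measure precisely because the "new" part of $\Delta_\CC(\X')$ sitting over $Q_0$ receives zero mass. Concretely $\nu_0'$ is just $\nu_0$ viewed on the subset $\Delta_\CC(\X') \setminus \pi^{-1}(Q_0)$, extended by zero across the exceptional locus.

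Next I would establish convergence $\nu_t \to \nu_0'$ on $(\X')^\hyb_\CC$ by checking the three conditions of Lemma \ref{lemConvergenceMCCHC} for the model $\X'$. For coordinate charts adapted to irreducible components or nodes of $\X'_0$ that map isomorphically to their images in $\X_0$ (i.e.\ charts disjoint from $\pi^{-1}(Q_0)$), the required integral limits are immediate: such a chart is literally also a chart for $\X$, and $\nu_0'$ restricted there equals $\nu_0$ restricted to the corresponding chart of $\X$, so convergence transfers verbatim from the hypothesis that $\nu_t \to \nu_0$ on $\X^\hyb_\CC$. The only charts requiring genuine work are those meeting the exceptional curve $E$ over $Q_0$: a chart adapted to $E$ itself, or to one of the new nodes $E \cap E_i$. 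Here the point is that the total $\nu_0'$-mass of $E$ and of these new edges is zero, so I must show the corresponding $\nu_t$-masses of the shrinking tubular neighborhoods also tend to zero. This follows by comparing with the $\X$-picture: the preimage in $X_t$ of such a neighborhood of $E$ is contained in the preimage of a neighborhood of $Q_0 \in \Delta_\CC(\X)$ that can be taken arbitrarily small, and since $\nu_t \to \nu_0$ on $\X^\hyb_\CC$ with $\nu_0(\{Q_0\}) = 0$, the portmanteau-type estimate $\limsup_t \nu_t(\text{small nbhd of } Q_0) \leq \nu_0(\overline{\text{nbhd}})$ forces these masses to $0$; combined with $\limsup_t \nu_t(X_t) < \infty$ this handles the error terms just as in the proof of Lemma \ref{lemConvergenceMCCHC}.

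The main obstacle I anticipate is the bookkeeping in condition (2) and (3) of Lemma \ref{lemConvergenceMCCHC} near the new node, because the change of the "$\Log$-coordinate" $\tfrac{\log|w|}{a\log|t|}$ under blowup is an affine reparametrization of the edge (as recorded in the proof of the preceding lemma: the edge $e_P \simeq [0,\tfrac{1}{ab}]$ gets the exceptional point inserted at $\tfrac{1}{a(a+b)}$, splitting into $[0,\tfrac{1}{a(a+b)}]$ and $[\tfrac{1}{a(a+b)},\tfrac{1}{ab}]$). One must check that integrating a test function against $\nu_t$ in the $\X'$-adapted coordinates, under this reparametrization, matches integrating against $\nu_0'$ on the relevant subinterval — but since $\nu_0'$ on the two sub-edges is just the restriction of $\nu_0$ on $e_P$ and the exceptional point carries no mass, this is a matter of unwinding the identification and invoking the $\X$-convergence on each piece. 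Uniqueness of $\nu_0'$ is automatic: any weak limit on $(\X')^\hyb_\CC$ must push forward under $\pi$ to the weak limit $\nu_0$ on $\X^\hyb_\CC$, and must assign zero mass to $\pi^{-1}(Q_0)$ since $\nu_0(\{Q_0\}) = 0$, which pins it down completely.
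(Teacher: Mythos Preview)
Your construction of $\nu_0'$ and your uniqueness argument are exactly what the paper does: $\nu_0'$ is the unique measure on $\Delta_\CC(\X')$ pushing forward to $\nu_0$ under the collapse map $q_\CC$, and this is well-defined precisely because $\nu_0$ is atomless. Where you diverge from the paper is in the verification of convergence. You propose to check the three conditions of Lemma~\ref{lemConvergenceMCCHC} for the model $\X'$ directly, transferring them from the known convergence on $\X$ via the explicit coordinate relations under blowup. The paper instead runs a soft compactness argument: restrict to a compact slice $(\X')^\hyb_{\CC,r}$ over $r\overline{\D}$, observe the total masses $\nu_t(X_t)$ are eventually bounded, apply Banach--Alaoglu to extract a subsequential weak limit $\tilde\nu_0$, and then note that $(q_\CC)_*\tilde\nu_0 = \nu_0$ because pushforward along the continuous map $q^\hyb_\CC$ commutes with weak limits. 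Your own uniqueness observation then forces $\tilde\nu_0 = \nu_0'$, so every subsequential limit coincides and the full family converges.

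Your route is correct, but the bookkeeping you anticipate is real, and one point deserves care: the hypothesis gives you weak convergence $\nu_t\to\nu_0$ on $\X^\hyb_\CC$, not the three chart conditions of Lemma~\ref{lemConvergenceMCCHC} for $\X$ directly. Recovering conditions (2) and (3) from weak convergence involves indicator functions of sets like $\{\alpha\le \tfrac{\log|w|}{a\log|t|}\le\beta\}$, so you are implicitly invoking a portmanteau argument that the boundary of these sets is $\nu_0$-null --- which is exactly where the atomlessness hypothesis enters again. This is fine, but it means your chart-by-chart verification is not purely a relabelling of the $\X$-data; you are using atomlessness twice (once to define $\nu_0'$, once to pass from weak convergence to interval-convergence). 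The paper's compactness argument sidesteps all of this: no coordinate changes, no tracking of edge reparametrizations, no separate treatment of Type~(2) versus Type~(3) charts near the exceptional curve. What your approach buys is explicitness --- one sees concretely how the mass redistributes --- at the cost of the affine bookkeeping on the subdivided edge that you correctly flagged.
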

\begin{proof}
  Since the map $q \colon \X' \to \X$ is a composition of blowups \cite[Theorem 1.15]{Lic68}, we see that the map $\X_0' \to \X_0$ is obtained by contracting some irreducible components of $\X_0'$ and that the map $\Delta_\CC(\X') \to \Delta_\CC(\X)$ is obtained by collapsing some curves and edges to points. Let $q_\CC \colon \Delta_\CC(\X') \to \Delta_\CC(\X)$, $q^\hyb_\CC \colon {(\X')}_\CC^\hyb \to \X_\CC^\hyb$ denote the induced maps.

  
We claim that there is a unique measure $\nu_0'$ on $\Delta_\CC(\X')$ such that the pushforward measure $(q_\CC)_* (\nu_0')$ is $\nu_0$. This is easy to see as there exist finitely many points $Q_1,\dots Q_s \in \Delta_\CC(\X)$ such that $q_\CC$ is an homeomorphism over $\Delta_\CC(\X) \setminus \{ Q_1,\dots,Q_s \}$. This determines $\nu_0'|_{q_\CC^{-1}(\Delta_\CC(\X) \setminus \{ Q_1,\dots,Q_s \})}$ and since $\nu_0(\{Q_i\}) = 0$, we also get that $\nu'_0|_{q_\CC^{-1}(Q_i)}= 0$ for all $i = 1,\dots,s$.

  Pick $0 < r < 1$. Let ${(\X')}^\hyb_{\CC,r}$ denote the preimage of $r\overline{\D}$ under the map $\pi\colon {(\X')}^\hyb_{\CC,r} \to \D$. Then, ${(\X')}^\hyb_{\CC,r}$ is a compact topological space and $\nu_t$ for $|t| \leq r$ is a collection of Radon measures on ${(\X')}^\hyb_{\CC,r}$. Since $\nu_t(X_t) \to \nu_0(\Delta_\CC(\X))$, we may decrease $r$ to further assume that $\nu_t(X_t) \leq \nu_0(\Delta_\CC(\X)) + 1$ for all $t \in r\D^*$.

  Let $t_1,t_2,\dots$ be a sequence in $r\overline{\D}$ that converges to 0. 
Applying the Banach-Alaoglu theorem to the dual space of continuous functions on $(\X')^\hyb_{\CC,r}$, we get that, after passing to a subsequence, the measures $\nu_{t_{i_k}}$ has a weak limit $\tilde{\nu}_0$. Then, we get that $\nu_{t_{i_k}} \to \tilde{\nu_0}$ on ${(\X')}_{\CC,r}^{\hyb}$. But since pushforward of Radon measures under a continuous map commutes with taking weak limits, we get that $\nu_{t_{i_k}} \to (q_\CC)_*(\tilde{\nu}_0)$. But this means that $(q_\CC)_*(\tilde{\nu}_0) = \nu_0$. By the uniqueness of such a measure we get that $\tilde{\nu}_0 = \nu_0'$ i.e. all convergent subsequnces have the same weak limit. Thus, we get that $\nu_t \to \nu'_0$ on ${(\X')}_{\CC,r}^{\hyb}$ and hence on ${(\X')}_{\CC}^{\hyb}$. 
\end{proof}

\begin{cor}
\label{corConvergenceBerkovichHybridSpace}
Let $X$, $\X$, $(\nu_t)_{t \in \D^*}$, $\nu_0$ be as in Lemma  \ref{lemLimitOnOnlyOneModel}. Then, there exists a Radon measure $\nu_0'$ on $X_\Ct^\an$ such that $\nu_t \to \nu_0'$ weakly as measures on $X^\hyb$. Moreover, the support of $\nu_0'$ lies in contained in the skeletal subset $\Gamma_\X \subset X^\an_{\Ct}$. 
\end{cor}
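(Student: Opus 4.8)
The plan is to deduce Corollary \ref{corConvergenceBerkovichHybridSpace} from Lemma \ref{lemLimitOnOnlyOneModel} together with the compatibility between the metrized curve complex hybrid space and the Berkovich hybrid space. Recall from the construction of $\X^\hyb_\CC$ (and the alternative characterization as the coarsest topology making $\X^\hyb_\CC \to \X$ and $\X^\hyb_\CC \to \X^\hyb$ continuous) that there is a natural continuous map $p_\CC \colon \X^\hyb_\CC \to \X^\hyb$ onto the Boucksom--Jonsson hybrid space $\X^\hyb = X \cup \Gamma_\X$, restricting to the identity on $X$ and to the retraction $\Delta_\CC(\X) \to \Gamma_\X$ on the central fiber (this retraction collapses each curve $E$ in $\Delta_\CC(\X)$ to the vertex $v_E$ and is the identity on edges). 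Moreover, by the theory developed in \cite[Section 7]{Shi20A}, $\X^\hyb$ is homeomorphic to an open subset of $X^\hyb$ with $\Gamma_\X$ identified with the skeleton of the model $\X$ inside $X^\an_\Ct$; composing gives a continuous map $X \cup \Delta_\CC(\X) \to X^\hyb$.

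First I would set $\nu_0' := (p_\CC)_*(\nu_0)$, a Radon measure on $\Gamma_\X \subset X^\an_\Ct$; its support clearly lies in $\Gamma_\X$, giving the last sentence of the statement. Then I would argue convergence. Fix $0 < r < 1$ small enough that $\nu_t(X_t) \le \nu_0(\Delta_\CC(\X)) + 1$ for all $t \in r\D^*$, which is possible since $\limsup_{t\to 0}\nu_t(X_t) < \infty$ (this bound is already part of the hypotheses inherited from Lemma \ref{lemLimitOnOnlyOneModel}, cf.\ its proof). Work inside the compact space $X^\hyb_r := \pi^{-1}(r\overline{\D}) \subset X^\hyb$. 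Given any sequence $t_i \to 0$, Banach--Alaoglu applied to the dual of $\mathcal{C}^0(X^\hyb_r)$ extracts a subsequence along which $\nu_{t_{i_k}}$ converges weakly to some Radon measure $\sigma$ on $X^\hyb_r$; necessarily $\mathrm{supp}(\sigma) \subset \Gamma_\X$ because the functions $|t|$-level sets force all mass to escape to the central fiber, and the central fiber of $X^\hyb_r$ is $X^\an_\Ct$, but one sees more precisely that $\sigma$ is supported on $\Gamma_\X$ by testing against functions pulled back from $\X^\hyb$. To identify $\sigma = \nu_0'$, I would test against an arbitrary continuous function $\varphi$ on $X^\hyb_r$: since $\X^\hyb_\CC \to X^\hyb$ is continuous, $\varphi$ pulls back to a continuous function $\tilde\varphi$ on $(\X^\hyb_\CC)_r$, and $\int \varphi\, \nu_{t_{i_k}} = \int \tilde\varphi\, \nu_{t_{i_k}} \to \int \tilde\varphi\, \nu_0$ by the given weak convergence on $\X^\hyb_\CC$; on the other hand $\int \tilde\varphi\, \nu_0 = \int (p_\CC^* \varphi)\, \nu_0 = \int \varphi\, (p_\CC)_*\nu_0 = \int \varphi\, \nu_0'$. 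Hence every weakly convergent subsequence has the same limit $\nu_0'$, and since $X^\hyb_r$ is compact metrizable (so weak-$*$ sequential compactness suffices), $\nu_t \to \nu_0'$ weakly on $X^\hyb_r$, hence on all of $X^\hyb$.

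The step I expect to require the most care is confirming that the pullback of a continuous function on $X^\hyb$ along $\X^\hyb_\CC \to X^\hyb$ is again continuous, i.e.\ that the map $\X^\hyb_\CC \to X^\hyb$ is genuinely continuous and not merely defined set-theoretically. This needs the identification of $\X^\hyb$ (Boucksom--Jonsson) with an open neighborhood of $\Gamma_\X$ in $X^\hyb$ and the explicit description of the topology of $\X^\hyb_\CC$ via the adapted coordinate charts; one checks on each chart type (adapted to an irreducible component, to an edge, or to an endpoint of an edge) that the composite $(z,w) \mapsto$ (point of $\Gamma_\X$, via $\log|w|/(a\log|z^aw^b|)$ coordinates) is continuous into $X^\hyb$. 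This is essentially bookkeeping with the neighborhood bases already written down, but it is the only place where the specific structure of $\X^\hyb_\CC$ enters rather than the soft functional-analytic argument. Everything else — Banach--Alaoglu, pushforward commuting with weak limits, the uniqueness-of-subsequential-limits trick — is identical to the argument in the proof of Lemma \ref{lemLimitOnOnlyOneModel} and can be invoked almost verbatim.
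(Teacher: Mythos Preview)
Your argument has a genuine gap: the composite map $\X^\hyb_\CC \to \X^\hyb \to X^\hyb$ you propose is \emph{not} continuous. The claim that $\X^\hyb$ is homeomorphic to an open subset of $X^\hyb$ is already false --- $\Gamma_\X$ is closed in $X^\an_\Ct$, not open --- but more importantly the set-theoretic inclusion $\X^\hyb \hookrightarrow X^\hyb$ fails to be continuous. To see this, take a smooth point $Q$ on a component $E \subset \X_0$ and a sequence $x_t \in X_t$ with $x_t \to Q$ in $\X$; then $x_t \to v_E$ in $\X^\hyb$. But in the blowup $\X' = \Bl_Q\X$, the same sequence converges in $(\X')^\hyb$ to the vertex $v_{E'}$ of the exceptional divisor, not to $v_E$. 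Since $X^\hyb = \varprojlim_{\X'} (\X')^\hyb$, the sequence $x_t$ therefore does not converge to $v_E$ in $X^\hyb$, so pulling back an arbitrary continuous $\varphi$ from $X^\hyb$ to $\X^\hyb_\CC$ along your map does not yield a continuous function, and the identity $\int\varphi\,d\nu_{t_{i_k}} = \int\tilde\varphi\,d\nu_{t_{i_k}}$ cannot be used to pass to the limit.

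A symptom of this problem is that your argument never invokes the no-atoms hypothesis $\nu_0(\{Q\}) = 0$, which is essential: if $\nu_0$ had an atom at some $Q \in E$, subsequential limits on $X^\hyb$ could place that mass at various points of the fiber $r_\X^{-1}(v_E) \subset X^\an_\Ct$ rather than at $v_E$, and the weak limit on $X^\hyb$ need not exist at all.

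The paper's proof proceeds in the other direction. Since $X^\hyb = \varprojlim (\X')^\hyb$, it suffices to establish compatible convergence on each $(\X')^\hyb$ for $\X'$ running over the cofinal system of models dominating $\X$. For each such $\X'$ one first applies Lemma~\ref{lemLimitOnOnlyOneModel} to get convergence on $(\X')^\hyb_\CC$ (this is exactly where the no-atoms hypothesis is used), and then pushes forward along the genuinely continuous map $(\X')^\hyb_\CC \to (\X')^\hyb$. Compatibility under the retractions $\Gamma_{\X''} \to \Gamma_{\X'}$ is immediate, and the support statement follows because the limit on each $(\X')^\hyb_\CC$ places no mass on the curves and edges collapsed by $\Delta_\CC(\X') \to \Delta_\CC(\X)$.
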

\begin{proof}
  Recall that the Berkovich hybrid space $X^\hyb = X \cup X^\an_\Ct$ can also be obtained as an inverse limit of the Boucksom-Jonsson hybrid spaces $(\X')^\hyb = X \cup \Gamma_{\X'}$, where $\X'$ runs through all snc models of $X$; these form a directed system. Therefore, to prove convergence on $X^\hyb$, it is enough to prove a compatible convergence of $\nu_t$ on $(\X')^\hyb$ for all snc models $\X'$ of $X$.

  Since the collection of models that dominate $\X$ form a cofinal system, it is enough to prove this for models $\X'$ that dominate $\X$. Consider such a model $\X'$. From Lemma \ref{lemLimitOnOnlyOneModel}, we get that the limit of $\nu_t$ on $(\X')_\CC^\hyb$ exists. From the continuous map $(\X')_\CC^\hyb \to (\X')^\hyb$ obtained by collapsing the curves in the central fiber, we see that the limit of $\nu_t$ on $(\X')^\hyb$ is just the pushforward of the limit of $\nu_t$ on $(\X')_\CC^\hyb$. Using the techniques from the proof of Lemma \ref{lemLimitOnOnlyOneModel}, it is also easy to check that the limits are compatible i.e.~if $\X'$ and $\X''$ are snc models of $X$ such that $\X''$ dominates $\X'$, then limit on $\Gamma_{\X'}$ is just the pushforward under the retraction map $\Gamma_{\X''} \to \Gamma_{\X'}$.
  This proves that the limit of $\nu_t$ on $X^\hyb$ exists. 

  The statement about the support follows from the fact that the limit of $\nu_t$ on $(\X')^\hyb_\CC$ does contain any of the edges collapsed under the map $\Delta_\CC(\X') \to \Delta_\CC(\X^\hyb)$.  
\end{proof}

\section{The sheaf $\omega_{\X_{0}}^{\otimes m}(\B|_{\X_0})$}
\label{secSectionsOfOmegaMB}
Throughout this section, let $m \geq 2$ denote a positive integer. Let $(X,B)$ satisfy the assumptions listed in Section \ref{subsecModels}. Let
$\X$ denote the minimal snc model of $(X,\frac{1}{m}B)$. Recall that $\X_0$ is reduced and that $\X_0$ and $\B$ do not intersect at nodal points in $\X_0$. In this section, we give a description of the global sections of $\omega_{\X_{0}}^{\otimes m}(\B|_{\X_0})$.

\subsection{Local description}
To get a local description of sections of $\omega_{\X_0}^{\otimes m}(\B|_{\X_0})$,  we just need to tensor the short exact sequence \eqref{eqnSESOmegaN0} for $\X_0' = \X_0$ and $k = m$ with $\O_{\X_0}(\B|_{\X_0})$ to get 
\begin{multline}
\label{eqnSESOmegaN}
0 \to \omega_{\X_{0}}^{\otimes m}(\B|_{\X_0}) \to \bigoplus_i \O_{E_i}(mK_{E_i} + mP^{(i)}_1 + \dots + mP^{(i)}_{r_i} + \B|_{E_i}) \\ \to \bigoplus_{P \in \X_0 \text{ node }}\C(P) \to 0,
\end{multline}
where the first map is given by the restrictions and the second map is given by   taking the sum/difference of residues (See Section \ref{subsecDualizingSheaf} for details). 

\subsection{Dimension of global sections}
To understand the convergence of the \break Narasimhan--Simha, we would like to use Grauert's Lemma \cite[Corollary III.12.9]{Har77} to be able to conclude that there exists an open neighborhood $U \subset \X$ of $\X_0$ such that we can find $\theta_1,\dots,\theta_M \in H^0(U,\omega_{\X}^{\otimes m}(\B))$ such that $\theta_i|_{X_t}$ is a basis of $H^0(X_t, \omega_{X_t}^{\otimes m}(B|_{X_t}))$ for $|t| \ll 1$ and $\theta_i|_{\X_{0}}$ is a basis for $H^0(\X_{0},\omega_{\X_{0}}^{\otimes m}(\B|_{\X_0}))$. To do this, it is enough to show that $h^0(\X_{0},\omega_{\X_{0}}^{\otimes m}(\B|_{\X_0})) = h^0(X_t,\omega_{X_t}^{\otimes m}(B|_{X_t})) = (2m-1)(g-1) + \deg(B|_{X_t})$.

\begin{rmk}
The reason for working with the minimal snc model of $(X,\frac{1}{m}B)$ is precisely because $h^0(\X_{0},\omega_{\X_{0}}^{\otimes m}(B|_{\X_0})) = h^0(X_t,\omega_{X_t}^{\otimes m}(B|_{X_t}))$ is satisfied for the minimal snc model $\X$, while it  not necessarily satisfied by a general snc model. The minimality assumption plays a role in the proof of Lemma \ref{lemH1Vanishing}, where it helps us control the  $H^0(E_i,(1-m)(K_{E_i} + P^{(i)}_1 + \dots + P^{(i)}_{r_i})-\B|_{E_i})$ term that shows up.    
\end{rmk}

\begin{lem}
\label{lemDimH0OmegaN}
  $$h^0(\X_{0}, \omega^{\otimes m}_{\X_0}(\B|_{\X_0})) = (2m-1)(g-1) + \deg(\B|_{\X_0}).$$
\end{lem}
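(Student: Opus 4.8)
The plan is to compute $h^0(\X_0, \omega_{\X_0}^{\otimes m}(\B|_{\X_0}))$ by combining the short exact sequence \eqref{eqnSESOmegaN} with a vanishing statement for the first cohomology. Taking the long exact sequence in cohomology associated to \eqref{eqnSESOmegaN}, we get
$$ 0 \to H^0(\omega_{\X_0}^{\otimes m}(\B|_{\X_0})) \to \bigoplus_i H^0(E_i, \L_i) \to \bigoplus_{P} \C(P) \to H^1(\omega_{\X_0}^{\otimes m}(\B|_{\X_0})) \to \bigoplus_i H^1(E_i,\L_i) \to 0, $$
where $\L_i = \O_{E_i}(mK_{E_i} + mP_1^{(i)} + \dots + mP_{r_i}^{(i)} + \B|_{E_i})$. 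The first move is to observe that on each component $E_i$ the line bundle $\L_i$ has degree $m(2g(E_i)-2) + m\val(E_i) + \deg(\B|_{E_i})$, which by the classification of components listed in Section \ref{subsecModels} is large enough (at least $2g(E_i)-1$, using $m\geq 2$ and $2g(E_i)-2+\val(E_i)+\tfrac{\deg(\B|_{E_i})}{m}\geq 0$, with the boundary cases checked by hand) that $H^1(E_i,\L_i) = 0$ and $h^0(E_i,\L_i) = \deg(\L_i) - g(E_i) + 1$ by Riemann--Roch. This is essentially the content of a lemma like Lemma \ref{lemH1Vanishing} referenced in the remark; I would cite or prove that the degrees are high enough, being careful about the rational components with $\val = 1$ or $\val = 0$ where the hypotheses $\deg(\B|_E)\geq m$ resp. $\deg(B|_E)\geq 2m$ are exactly what is needed.

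Given the vanishing $\bigoplus_i H^1(E_i,\L_i) = 0$, the connecting map $\bigoplus_i H^0(E_i,\L_i) \to \bigoplus_P \C(P)$ is surjective (equivalently $H^1(\omega_{\X_0}^{\otimes m}(\B|_{\X_0})) = 0$), so
$$ h^0(\omega_{\X_0}^{\otimes m}(\B|_{\X_0})) = \sum_i h^0(E_i,\L_i) - \#\{\text{nodes of }\X_0\}. $$
Now I substitute $h^0(E_i,\L_i) = m(2g(E_i)-2) + m\val(E_i) + \deg(\B|_{E_i}) - g(E_i) + 1 = (2m-1)(g(E_i)-1) + m\val(E_i) + \deg(\B|_{E_i})$ and sum. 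Writing $\delta$ for the number of nodes, standard dual-graph bookkeeping gives $g = \sum_i g(E_i) + \delta - \#\{\text{components}\} + 1$ (genus formula for the nodal curve $\X_0$, which is connected), and $\sum_i \val(E_i) = 2\delta$. Plugging these in, the $\val$ terms contribute $2m\delta$, and after collecting the $g(E_i)$, component-count, and $\delta$ contributions one checks that everything telescopes to $(2m-1)(g-1) + \deg(\B|_{\X_0})$. Since $\deg(\B|_{\X_0}) = \deg(B|_{X_t})$, this also matches $h^0(X_t,\omega_{X_t}^{\otimes m}(B|_{X_t}))$ computed by Riemann--Roch on the smooth fiber (using that $\omega_{X_t}^{\otimes m}(B|_{X_t})$ has degree $m(2g-2)+\deg(B|_{X_t}) \geq 2g-1$), which is the statement.

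The main obstacle is the $H^1$ vanishing on the components, i.e.\ verifying that each $\L_i$ really has large enough degree in every case of the classification — in particular the delicate cases $g(E_i)=0$ with $\val(E_i)\leq 1$, where one must use the extra hypotheses on $\deg(\B|_{E_i})$, and the case $g(E_i)=1$ with $\val(E_i)=\deg(\B|_{E_i})=0$ which the standing assumptions exclude. Everything after that is the combinatorial identity, which is routine. I would also remark that minimality of the snc model of $(X,\tfrac{1}{m}B)$ is what guarantees no component violates the degree bound (a general snc model could have extra $(-1)$-curves $E$ with $\deg(\B|_E)<m$ on which $\L_E$ fails to have high enough degree), matching the remark preceding the lemma.
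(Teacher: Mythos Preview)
Your overall strategy---long exact sequence plus $H^1$ vanishing plus combinatorial bookkeeping---is the same as the paper's, and the final dimension count is correct. But there is a genuine logical gap in the middle.

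You write: ``Given the vanishing $\bigoplus_i H^1(E_i,\L_i) = 0$, the connecting map $\bigoplus_i H^0(E_i,\L_i) \to \bigoplus_P \C(P)$ is surjective (equivalently $H^1(\omega_{\X_0}^{\otimes m}(\B|_{\X_0})) = 0$).'' This implication is false. From the long exact sequence, the vanishing of $\bigoplus_i H^1(E_i,\L_i)$ only tells you that $H^1(\omega_{\X_0}^{\otimes m}(\B|_{\X_0}))$ is the \emph{cokernel} of the residue map, not that it is zero. Surjectivity of the residue map is exactly the statement $h^1(\omega_{\X_0}^{\otimes m}(\B|_{\X_0}))=0$, and this does not follow from componentwise vanishing. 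Indeed, on an inessential component $E_i\simeq\P^1$ one has $h^0(\L_i)=1$ while $E_i$ meets two nodes, so the local residue map $H^0(E_i,\L_i)\to\C^2$ is not surjective; some global argument is needed to see that the combined map still hits everything.

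You have also misidentified Lemma~\ref{lemH1Vanishing}: it asserts $h^1(\X_0,\omega_{\X_0}^{\otimes m}(\B|_{\X_0}))=0$, not the componentwise vanishing $h^1(E_i,\L_i)=0$. The paper proves it by Serre duality, reducing to $h^0(\omega_{\X_0}^{\otimes(1-m)}(-\B|_{\X_0}))=0$, and then uses the short exact sequence for $k=1-m$ together with a propagation argument along inessential chains in the dual graph (starting from an essential component, which always exists under the standing hypotheses). This is where minimality of the model is actually used, and it is a genuinely different and more delicate step than the degree check you sketched. Once you have that lemma, your Euler-characteristic computation (which matches the paper's) finishes the proof.
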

\begin{proof}
  Using the short exact sequence \eqref{eqnSESOmegaN}, we get
  $$ \chi(\omega_{\X_{0}}^{\otimes m}(\B|_{\X_0})) = \left(\sum_i\chi(\O_{E_i}(mK_{E_i} + mP^{(i)}_1 + \dots + mP^{(i)}_{r_i}+\B|_{E_i})\right) - \#E(\Gamma_{\X}),$$
  where $\chi(\F) = h^0(\F) - h^1(\F)$ is the Euler characteristic of a sheaf $\F$.
  By Riemann--Roch, the right hand side is
\begin{align*}
 & \left(\sum_i\chi(\O_{E_i}(mK_{E_i} + mP^{(i)}_1 + \dots + mP^{(i)}_{r_i} + \B|_{E_i})\right) - \#E(\Gamma_{\X}) \\
 &= \left( \sum_i((2g(E_i)-2+r_i)m + \deg(B|_{E_i}) - g(E_i) +1) \right) - \#E(\Gamma)
  \\&= (2m-1)\left(\sum_{i} g(E_i)\right) + (-2m+1) \# V(\Gamma) + (2m-1) \# E(\Gamma) + \deg(\B|_{\X_0})
  \\ &= (2m-1)\left(\sum_{i} g(E_i) + g(\Gamma_\X) - 1 \right) + \deg(\B|_{\X_0})
  \\ &= (2m-1)(g-1) + \deg(\B|_{\X_0}).
\end{align*}
Therefore the result follows if we can show that $h^1(\omega_{\X_{0}}^{\otimes m}(\B|_{\X_0})) = 0$. This is proved in the following lemma. 
\end{proof}

\begin{lem}
 \label{lemH1Vanishing}
Let $\X$ be the minimal snc model of $(X,\frac{1}{m}B)$. Then, $$h^1(\X_{0}, \omega_{\X_{0}}^{\otimes m}(\B|_{\X_0})) = 0.$$
\end{lem}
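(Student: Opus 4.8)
The plan is to pass to the dual line bundle via Serre duality and then analyze its sections component by component. Since $\X_0$ is reduced with at worst nodal singularities it is Gorenstein, and $\omega_{\X_0}^{\otimes m}(\B|_{\X_0})$ is invertible because $\B$ avoids the nodes; hence Serre duality gives $h^1(\X_0,\omega_{\X_0}^{\otimes m}(\B|_{\X_0})) = h^0(\X_0,\omega_{\X_0}^{\otimes(1-m)}(-\B|_{\X_0}))$, and it suffices to prove the right-hand side vanishes. Tensoring the short exact sequence \eqref{eqnSESOmegaN0} with $k = 1-m$ by the invertible sheaf $\O_{\X_0}(-\B|_{\X_0})$ — exact since $\B$ does not pass through the nodes — and taking global sections, I would identify $H^0(\X_0,\omega_{\X_0}^{\otimes(1-m)}(-\B|_{\X_0}))$ with the space of tuples $(\psi_i)_i$, where $\psi_i \in H^0\bigl(E_i,\O_{E_i}((1-m)(K_{E_i}+P_1^{(i)}+\dots+P_{r_i}^{(i)})-\B|_{E_i})\bigr)$ and the $\psi_i$ satisfy the residue compatibility at every node. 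Note that the term $H^0(E_i,(1-m)(K_{E_i}+\dots)-\B|_{E_i})$ flagged in the preceding Remark is exactly what appears here.

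The first step is to show $\psi_i = 0$ on every \emph{essential} component $E_i$. The degree of the relevant bundle on $E_i$ is $(1-m)\bigl(2g(E_i)-2+\val(E_i)\bigr) - \deg(\B|_{E_i})$, and running through the list of admissible shapes of an irreducible component of $\X_0$ from Section \ref{subsecModels} shows this degree is strictly negative whenever $E_i$ is essential: if $2g(E_i)-2+\val(E_i)\ge 1$ it is $\le 1-m<0$; if $2g(E_i)-2+\val(E_i)=0$ then essentiality forces $\deg(\B|_{E_i})\ge 1$, so it equals $-\deg(\B|_{E_i})<0$; and the two remaining shapes ($g(E_i)=0$ with $\val(E_i)=1,\ \deg(\B|_{E_i})\ge m$, or $\val(E_i)=0,\ \deg(\B|_{E_i})\ge 2m$) also give a negative degree. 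Hence $\psi_i=0$. This is precisely where minimality of the model enters, as flagged in the Remark: on a non-minimal model one could have, e.g., a $(-1)$-curve $E$ with $\deg(\B|_E)=0$, on which this bundle has positive degree and the estimate would fail.

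The second step is to propagate the vanishing across inessential chains. Each inessential vertex has valency $2$ in $\Gamma_\X$, so it lies on a maximal inessential chain $E_0 - E_1 - \dots - E_N$ with $E_0,E_N$ essential; the only alternative, a cycle made up entirely of inessential vertices, would — by connectedness of $\Gamma_\X$ — force $\X_0$ to be a cycle of rational curves with $B=0$, hence $g=1$ and $r=0$, which is excluded by the standing hypotheses. On such a chain $\psi_0=\psi_N=0$ by the first step, and I would induct along the chain: if $\psi_{i-1}=0$, the residue compatibility at the node $E_{i-1}\cap E_i$ forces the residue of $\psi_i$ there to vanish; but $E_i\cong\P^1$ with its two nodes marked, and since $K_{\P^1}+(\text{two points})$ is linearly trivial the bundle $\O_{E_i}((1-m)(K_{E_i}+\dots)-\B|_{E_i})$ is trivial, with a one-dimensional space of global sections on which the residue at a fixed node is an isomorphism onto $\C$; so $\psi_i=0$. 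Since every component of $\X_0$ is either essential or lies on such a chain, this gives $(\psi_i)_i=0$, whence $h^0(\X_0,\omega_{\X_0}^{\otimes(1-m)}(-\B|_{\X_0}))=0$ and the lemma follows.

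The main obstacle is exactly the inessential components: the naive degree estimate that kills $\psi_i$ on essential components is useless there, because on an inessential $\P^1$ the relevant line bundle is trivial and carries a one-dimensional space of sections rather than none. One is forced to use the residue/gluing conditions built into \eqref{eqnSESOmegaN0} to push the vanishing along each inessential chain inward from its essential ends, and one must separately rule out a "floating" cycle of inessential components — which is where connectedness of $\Gamma_\X$ together with the exclusion of the case $g=1$, $r=0$ is needed.
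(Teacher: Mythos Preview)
Your proof is correct and follows essentially the same approach as the paper: reduce to $h^0(\omega_{\X_0}^{\otimes(1-m)}(-\B|_{\X_0}))=0$ by Serre duality, kill the contribution from every essential component by a degree count using the list in Section~\ref{subsecModels}, and then propagate the vanishing through inessential chains via the residue compatibility, after ruling out a pure cycle of inessential components by the exclusion of $(g,r)=(1,0)$. The only cosmetic difference is that the paper propagates along arbitrary paths from a single fixed essential vertex, whereas you organize the propagation chain by chain from both essential ends; the content is identical.
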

\begin{proof}
  Using Serre duality, we have $h^1(\omega_{\X_{0}}^{\otimes m}(\B|_{\X_0})) = h^0(\omega_{\X_{0}}^{\otimes 1-m}(-\B|_{\X_0}))$ and it is enough to show that $h^0(\X_{0}, \omega_{\X_{0}}^{\otimes 1-m}(-\B|_{\X_0})) = 0$.

  Consider the short exact sequence obtained by tensoring \eqref{eqnSESOmegaN0} for $\X_0' = \X_0$ and $k = 1-m$ with $\O_{\X_0}(-\B|_{\X_0})$.

  \begin{multline*}
0 \to \omega_{\X_{0}}^{\otimes(1-m)}(-\B|_{\X_0}) \to \bigoplus_i \O_{E_i}((1-m)(K_{E_i} + P^{(i)}_1 + \dots + P^{(i)}_{r_i}) - \B|_{E_i}) \\ \to \bigoplus_{P \in \X_0 \text{ node }}\C(P) \to 0,
\end{multline*}

  By considering the long exact sequence induced in cohomology, we get
  \begin{multline*}
  0 \to H^0(\omega_{\X_{0}}^{\otimes 1-m}(-\B|_{\X_0})) \to \bigoplus_i H^0(E_i,(1-m)(K_{E_i} + P^{(i)}_1 + \dots + P^{(i)}_{r_i})-\B|_{E_i}) \to
   \\
   \bigoplus_{P \subset \X_0 \text{ node }}\C(P).  
  \end{multline*}
  Since $m \geq 2$, $H^0(E_i,(1-m)(K_{E_i} + P^{(i)}_1 + \dots + P^{(i)}_{r_i})-B|_{E_i}) = 0$ in any one of the following cases.
\begin{itemize}
  \item $g(E_i) \geq 2$,
  \item $g(E_i) = 1$ and $\val(E_i) \geq 1$
  \item $g(E_i) = 1$ and $\deg(B|_{E_i}) \geq 1$
  \item $g(E_i) = 0$ and $\val(E_i) \geq 3$
  \item $g(E_i) = 0$, $\val(E_i)  = 2$ and $\deg(B|_{E_i}) \geq 1$, 
  \item $g(E_i) = 0$, $\val(E_i) = 1$ and $\deg(B|_{E_i}) \geq m$, or
  \item $g(E_i) = 0$, $\val(E_i) = 0$ and $\deg(B|_{E_i}) \geq 2m-1$.
\end{itemize}
  
Comparing this with all the constraints on the irreducible components of $\X_0$ mentioned in Section \ref{subsecModels}, we see that the only contribution in the middle term comes from the inessential components i.e.~the components for which $g(E_i) = \deg(\B|_{E_i}) = 0$ and $\val(E_i) = 2$. Note that here we crucially use that $\X$ is the minimal snc model of $(X,\frac{1}{m}B)$. 
In this case, 
$$ h^0(E_i,(1-m)(K_{E_i} + P^{(i)}_1 + P^{(i)}_{2})) = h^0(\P^1,\O_{\P^1}) = 1,$$
and any section of $H^0(E_i,(1-m)(K_{E_i} + P^{(i)}_1 + P^{(i)}_{2})$ is determined by its residue at $P^{(i)}_1$.

Note that not all irreducible components $E_i \subset \X_0$ are inessential. Indeed, this would mean that $\X_0$ is a cycle of rational curves with no marked points, which means that $g = 1$ and $\deg(B|_{X_t}) = 0$, contradicting our assumption that $(X,B)$ is not a family of genus 1 curves with no marked points.

So, without loss of generality, let $E_1 \subset \X_0$ be an essential component. Suppose $E_2$ is an inessential component of $\X_0$ such that $P = E_1 \cap E_2$ is a nodal point in $\X_0$. Let $\psi \in H^0(\omega_{\X_{0}}^{\otimes 1-m}(-\B|_{\X_0}))$. Then, $\psi|_{E_1}$ must be zero. So by compatibility of residues, the residue of $\psi|_{E_2}$ at $P$ must be 0. Thus, $\psi|_{E_2} = 0$. More generally, for any inessential component $E_i$ of $\X_0$, we pick a path joining $v_{E_1}$ and $v_{E_i}$ in $\Gamma_\X$ and apply induction along this path to conclude that $\psi|_{E_i} = 0$. This can be done as $\Gamma_\X$ is connected. Thus, $H^0(\omega_{\X_{0}}^{\otimes 1-m}(-\B|_{\X_0})) = 0$
\end{proof}

Applying Grauert's Lemma \cite[Corollary III.12.9]{Har77} to $\L = \omega_{\X}^{\otimes m}(\B)$, we conclude
\begin{lem}
\label{lemExtendSections}
There exists an open neighborhood $U \subset \X$ of $\X_0$ such that we can find $\theta_1,\dots,\theta_M \in H^0(U,\omega_{\X}^{\otimes m}(\B))$ such that $\theta_i|_{X_t}$ is a basis of $H^0(X_t, \omega_{X_t}^{\otimes m}(B))$ for $|t| \ll 1$ and $\theta_i|_{\X_{0}}$ is a basis for $H^0(\X_{0},\omega_{\X_{0}}^{\otimes m}(\B))$. \qed
\end{lem}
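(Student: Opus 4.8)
The plan is to realize $\theta_1,\dots,\theta_M$ as a frame of the direct image sheaf $\pi_*\F$, where $\F := \omega_{\X}^{\otimes m}(\B)$ and $\pi\colon \X \to \D$ is the structure morphism, and then invoke Grauert's base-change theorem. The only substantive input is that the function $t \mapsto h^0(\X_t,\F|_{\X_t})$ is constant on $\D$, and this is exactly what the two preceding lemmas have supplied.

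First I would record the identifications $\F|_{X_t} \cong \omega_{X_t}^{\otimes m}(B|_{X_t})$ and $\F|_{\X_0} \cong \omega_{\X_0}^{\otimes m}(\B|_{\X_0})$. Since $\X$ is regular, $\omega_\X$ and $\O_\X(\B)$ are line bundles. The smooth fiber $X_t$ is a reduced Cartier divisor linearly equivalent to $\pi^*\{t\}$, so $\O_\X(X_t)|_{X_t}$ is trivial and adjunction gives $\omega_\X|_{X_t} \cong \omega_{X_t}$; as $\B$ is horizontal, hence flat over $\D$, restriction to $X_t$ commutes with forming $\O(\B)$, and we get $\F|_{X_t}\cong\omega_{X_t}^{\otimes m}(B|_{X_t})$. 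The identification at $t=0$ is the same computation, using crucially that $\X_0$ is \emph{reduced} --- so that $\X_0 = \pi^*\{0\}$ as divisors, $\O_\X(\X_0)|_{\X_0}$ is trivial, and $\omega_\X|_{\X_0}\cong\omega_{\X_0}$ --- together with the fact that $\B$ meets $\X_0$ away from the nodes, which are properties built into the minimal snc model of $(X,\frac{1}{m}B)$.

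Next I would check the constancy of $h^0$. For $t \neq 0$, the Serre dual $\omega_{X_t}^{\otimes 1-m}(-B|_{X_t})$ has strictly negative degree under the running hypotheses --- here $m\ge 2$ is used, as is the exclusion of the case $g=1$, $B=0$ --- so its $H^0$ vanishes, hence $h^1(X_t,\F|_{X_t})=0$, and Riemann--Roch yields $h^0(X_t,\F|_{X_t}) = (2m-1)(g-1)+\deg(B|_{X_t}) =: M$. For $t=0$ this same value is the content of Lemma \ref{lemDimH0OmegaN} (whose proof rests on Lemma \ref{lemH1Vanishing}). Since $\pi$ is projective and flat and $\F$ is flat over $\D$ (being a line bundle), constancy of $t \mapsto h^0$ lets me apply the complex-analytic Grauert base-change theorem, the analytic counterpart of \cite[Corollary III.12.9]{Har77}: $\pi_*\F$ is locally free of rank $M$ on $\D$, and $(\pi_*\F)\otimes k(t) \to H^0(\X_t,\F|_{\X_t})$ is an isomorphism for every $t$.

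To conclude, I would shrink $\D$ to a disk $\D_r$ on which the locally free sheaf $\pi_*\F$ is free, set $U := \pi^{-1}(\D_r)$, pick a frame $\theta_1,\dots,\theta_M \in H^0(\D_r,\pi_*\F) = H^0(U,\F)$, and read off from the base-change isomorphisms that $\theta_i|_{X_t}$ is a basis of $H^0(X_t,\omega_{X_t}^{\otimes m}(B|_{X_t}))$ for $t\in\D_r$ and that $\theta_i|_{\X_0}$ is a basis of $H^0(\X_0,\omega_{\X_0}^{\otimes m}(\B|_{\X_0}))$. There is essentially no obstacle here beyond bookkeeping: the genuine content --- the equality $h^0(\X_0,\cdot)=h^0(X_t,\cdot)$, which fails on a general snc model --- has already been extracted; the one point that demands mild care is matching the restrictions of $\omega_\X^{\otimes m}(\B)$ to the fibers, which relies on the reducedness of $\X_0$ and the horizontality of $\B$.
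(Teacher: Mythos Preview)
Your proposal is correct and follows the same approach as the paper: the paper simply states ``Applying Grauert's Lemma \cite[Corollary III.12.9]{Har77} to $\L = \omega_{\X}^{\otimes m}(\B)$, we conclude'' and ends with a \qed, relying on the constancy of $h^0$ established in Lemma~\ref{lemDimH0OmegaN}. You have spelled out the adjunction identifications and the $h^1$-vanishing for $t\neq 0$ that the paper leaves implicit, but the argument is otherwise identical.
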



\subsection{A description of global sections}
The following lemma tells us that we can recover the residues of any section $\psi \in H^0(\X_{0},\omega_{\X_{0}}^{\otimes m}(\B|_{\X_0}))$ along an inessential chain by just knowing it on one of the edges in the inessential chain.  
\begin{lem}
\label{lemResidueGenus0Valency2}
Let $v_0,\dots,v_N$ be an inessential chain in  $\Gamma_{\X}$. Let $Q_i$ for $0 \leq i \leq N - 1$ be the nodal point in $\X_0$ corresponding to the edge $v_{i}v_{i+1}$ in the inessential chain. 

Let $\theta \in H^0(\X_{0},\omega_{\X_{0}}^{\otimes m}(B))$. Let $C$ denote the residue of $\theta$ at $Q_0$ along $E_{v_0}$. Then the residue of $\theta$ at $Q_i$ along $E_{v_i}$ is $C$ and the residue at $Q_i$ along $E_{v_{i+1}}$ is $(-1)^{m}C$ for all $0 \leq i \leq N-1$.
\end{lem}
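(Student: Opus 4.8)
The plan is to induct on $i$ along the inessential chain, using the local description of sections of $\omega_{\X_0}^{\otimes m}(\B|_{\X_0})$ together with the fact that on an inessential component the relevant line bundle is trivial of rank one. First I would fix $0 \le i \le N-1$ and analyze the inessential component $E_{v_i}$ for $1 \le i \le N-1$. By definition of inessential, $g(E_{v_i}) = 0$, $\val(E_{v_i}) = 2$ and $\deg(\B|_{E_{v_i}}) = 0$, so $E_{v_i} \simeq \P^1$ meets the rest of $\X_0$ only at $Q_{i-1}$ and $Q_i$, and $\B$ does not meet $E_{v_i}$. Restricting the short exact sequence \eqref{eqnSESOmegaN} (or rather looking at the middle term on $E_{v_i}$), the restriction $\theta|_{E_{v_i}}$ is a section of $\O_{\P^1}(mK_{\P^1} + mQ_{i-1} + mQ_i) = \O_{\P^1}(m(-2) + 2m) = \O_{\P^1}$, which is one-dimensional. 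Choosing the affine coordinate $z$ on $E_{v_i} \simeq \P^1$ with $z = 0$ at $Q_{i-1}$ and $z = \infty$ at $Q_i$, the section is $\theta|_{E_{v_i}} = c\, z^{-m}\, dz^{\otimes m}$ for some constant $c$ (a global generator of $\O_{\P^1}$, written in this canonical-plus-poles trivialization). The key computation is then: the residue of $c\, z^{-m} dz^{\otimes m}$ at $z = 0$ along $E_{v_i}$ is $c$ (the coefficient of $z^{-m}$), while to read the residue at $z = \infty$ we change coordinates to $w = 1/z$, so $dz = -w^{-2} dw$ and $dz^{\otimes m} = (-1)^m w^{-2m} dw^{\otimes m}$, hence $c z^{-m} dz^{\otimes m} = c\, w^m \cdot (-1)^m w^{-2m} dw^{\otimes m} = (-1)^m c\, w^{-m} dw^{\otimes m}$, giving residue $(-1)^m c$ at $Q_i$ along $E_{v_i}$.

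Next I would chain these together using the residue-compatibility condition built into the definition of $\omega_{\X_0}^{\otimes m}$: at each node $Q_i = E_{v_i} \cap E_{v_{i+1}}$, the residues of $\theta$ along the two branches satisfy $\res_{Q_i}(\theta|_{E_{v_{i+1}}}) + (-1)^{m+1}\res_{Q_i}(\theta|_{E_{v_i}}) = 0$, i.e.\ $\res_{Q_i}(\theta|_{E_{v_{i+1}}}) = (-1)^m \res_{Q_i}(\theta|_{E_{v_i}})$. Set $C = \res_{Q_0}(\theta|_{E_{v_0}})$. Base case $i = 0$: the claim that the residue at $Q_0$ along $E_{v_0}$ is $C$ is the definition of $C$, and the residue at $Q_0$ along $E_{v_1}$ is $(-1)^m C$ by the compatibility condition. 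Inductive step: assume the residue of $\theta$ at $Q_{i-1}$ along $E_{v_i}$ equals $(-1)^m \cdot (\text{residue at } Q_{i-1} \text{ along } E_{v_{i-1}})$... — more cleanly, assume the residue at $Q_{i-1}$ along $E_{v_i}$ is $(-1)^m C$ for the relevant branch; since $E_{v_i}$ ($1 \le i \le N-1$) is inessential, the computation of the previous paragraph with $c = (-1)^m C$ (matching the residue at $z=0$, which is $Q_{i-1}$) shows the residue of $\theta|_{E_{v_i}}$ at $Q_i$ is $(-1)^m \cdot (-1)^m C = C$... Here I need to be careful about the orientation of each $E_{v_i}$ relative to which node is $0$ and which is $\infty$, so I would instead phrase the induction as: the residue of $\theta$ at $Q_i$ along $E_{v_i}$ equals $(-1)^m$ times the residue at the \emph{other} node $Q_{i-1}$ along $E_{v_i}$ (this is exactly the $z \leftrightarrow 1/z$ computation, symmetric in the two punctures up to the sign $(-1)^m$, but note $(-1)^m \cdot (-1)^m = 1$ so actually the two residues are \emph{equal}), and combine with compatibility at $Q_{i-1}$.

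Let me state the induction precisely to avoid sign confusion: I claim for all $0 \le i \le N-1$, $\res_{Q_i}(\theta|_{E_{v_i}}) = C$. For $i = 0$ this is the definition. For the step from $i-1$ to $i$ (with $1 \le i \le N-1$): by compatibility at $Q_{i-1}$, $\res_{Q_{i-1}}(\theta|_{E_{v_i}}) = (-1)^m \res_{Q_{i-1}}(\theta|_{E_{v_{i-1}}}) = (-1)^m C$; since $E_{v_i}$ is inessential, writing $\theta|_{E_{v_i}}$ in a coordinate with $Q_{i-1}$ at $0$ and $Q_i$ at $\infty$ and applying the $w = 1/z$ change of coordinates shows $\res_{Q_i}(\theta|_{E_{v_i}}) = (-1)^m \res_{Q_{i-1}}(\theta|_{E_{v_i}}) = (-1)^m \cdot (-1)^m C = C$, completing the induction. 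Finally, the residue at $Q_i$ along $E_{v_{i+1}}$ is $(-1)^m \res_{Q_i}(\theta|_{E_{v_i}}) = (-1)^m C$ by compatibility at $Q_i$. The main obstacle, such as it is, is purely bookkeeping: keeping track consistently of which puncture of each $\P^1$ is labeled $0$ versus $\infty$ and ensuring the sign $(-1)^m$ from the coordinate change is correctly reconciled with the sign $(-1)^{m+1}$ (equivalently $(-1)^m$ after moving it across) in the residue-compatibility relation of \eqref{eqnSESOmegaN}; once the conventions are pinned down the argument is a short induction.
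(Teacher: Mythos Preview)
Your proposal is correct and follows essentially the same approach as the paper's proof: both use the residue compatibility at the nodes together with the fact that $H^0(E_{v_i}, mK_{E_{v_i}} + mQ_{i-1} + mQ_i) \simeq H^0(\P^1,\O_{\P^1})$ is one-dimensional, so that the two residues on an inessential component differ by $(-1)^m$, and then induct along the chain. The only difference is that you make the $z \mapsto 1/z$ computation explicit where the paper simply asserts that the residues at $Q_{i-1}$ and $Q_i$ along $E_{v_i}$ differ by $(-1)^m$.
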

\begin{proof}
  If the residue of $\theta$ at $Q_0$ along $E_{v_0}$ is $C$, then its residue at $Q_{0}$ along $E_{v_1}$ must be $(-1)^mC$ by the compatibility of the residues.
  Note that
  $$\theta|_{E_{v_1}} \in H^0(E_{v_1}, mK_{E_{v_{1}}} + mQ_0 + mQ_1).$$

  We also have that $H^0(E_{v_1}, mK_{E_{v_{1}}} + mQ_0 + mQ_1) \simeq H^0(\P^1,\O_{\P^1})$ is a one-dimensional complex vector space and the map
  $$H^0(E_{v_1}, mK_{E_{v_{1}}} + mQ_0 + mQ_1) \to \C$$
given by taking the residue at $Q_1$ is an isomorphism. The residue of $\theta$ at $Q_0$ and $Q_1$ differ by a factor of $(-1)^m$. Thus, the residue of $\theta$ at $Q_1$ is $C$. Now the proof follows by induction.  
\end{proof}

\begin{lem}
\label{lemSESVSn}
Let $\X$ be the minimal snc model of $(X,\frac{1}{m}B)$.  We have the following short exact sequence of vector spaces.
\begin{multline*}
  0 \to \bigoplus_i H^0(mK_{E_i} + (m-1)P_1^{(i)} + \dots + (m-1)P^{(i)}_{r_i} + \B|_{E_i}) \xrightarrow{\phi} \\
  H^0(\omega_{\X_{0}}^{\otimes m}(\B|_{\X_0}) ) \xrightarrow{\phi'} \C^{E(\tilde\Gamma)} \to 0.
\end{multline*}
\end{lem}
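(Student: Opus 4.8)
The plan is to build the short exact sequence by combining the two short exact sequences of sheaves on $\X_0$ that were set up in Section \ref{subsecDualizingSheaf}, namely \eqref{eqnSESOmegaN} (the restriction--residue sequence for $\omega_{\X_0}^{\otimes m}(\B|_{\X_0})$) and the variant \eqref{eqnSESOmegaN20} tensored with $\O_{\X_0}(\B|_{\X_0})$, which reads
\begin{multline*}
0 \to \bigoplus_i \O_{E_i}(mK_{E_i} + (m-1)P^{(i)}_1 + \dots + (m-1)P^{(i)}_{r_i} + \B|_{E_i}) \\ \to \omega_{\X_{0}}^{\otimes m}(\B|_{\X_0}) \to \bigoplus_{P \in \X_0 \text{ node}} \C(P) \to 0,
\end{multline*}
where the last map is the residue at $P$ along the component attached to $e_P^-$. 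Taking the long exact sequence in cohomology for this sequence, the left map $\phi$ on global sections is injective (no residue compatibility to check on the subsheaf), so the only thing to pin down is the image of the induced map $\phi'$ from $H^0(\omega_{\X_0}^{\otimes m}(\B|_{\X_0}))$ to $\bigoplus_{P} \C(P) = \C^{E(\Gamma_\X)}$, together with the fact that this image is exactly the subspace $\C^{E(\tilde\Gamma)} \subset \C^{E(\Gamma_\X)}$.

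The first step is to compute the cokernel of $\phi'$ using the $h^1$-vanishing already available: by Lemma \ref{lemH1Vanishing} we have $h^1(\omega_{\X_0}^{\otimes m}(\B|_{\X_0})) = 0$, and by Lemma \ref{lemDimH0OmegaN} together with Riemann--Roch for each $E_i$ one computes $\dim \bigoplus_i H^0(mK_{E_i} + (m-1)P^{(i)}_1 + \dots + (m-1)P^{(i)}_{r_i} + \B|_{E_i})$; subtracting gives that the rank of $\phi'$ equals $\#E(\tilde\Gamma)$. (One must be mildly careful that on an inessential component $E_i$, where $g(E_i)=0$, $\val(E_i)=2$ and $\deg(\B|_{E_i})=0$, the bundle $mK_{E_i} + (m-1)(P^{(i)}_1+P^{(i)}_2)$ has degree $-2m + 2(m-1) = -2 < 0$, so it contributes $0$, which is exactly why only essential components contribute to the left-hand term and why the bookkeeping produces $\#E(\tilde\Gamma)$ rather than $\#E(\Gamma_\X)$.) The second step is to identify the image of $\phi'$ concretely: an element of the image records, for each node, a residue, and I want to show that these residue data are precisely those that are constant (up to the sign $(-1)^m$) along each maximal inessential chain and unconstrained across distinct chains. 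This is exactly the content of Lemma \ref{lemResidueGenus0Valency2}: any $\theta \in H^0(\omega_{\X_0}^{\otimes m}(\B|_{\X_0}))$ has residue data constant along an inessential chain, so $\phi'$ lands in $\C^{E(\tilde\Gamma)}$ (thinking of an edge of $\tilde\Gamma$ as a maximal inessential chain in $\Gamma_\X$); combined with the dimension count from the first step, which shows the rank is $\#E(\tilde\Gamma) = \dim \C^{E(\tilde\Gamma)}$, this forces $\phi'$ to surject onto $\C^{E(\tilde\Gamma)}$.

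I would then assemble these: injectivity of $\phi$ is immediate, exactness in the middle is the cohomology long exact sequence, and surjectivity of $\phi'$ onto $\C^{E(\tilde\Gamma)}$ follows from the containment (Lemma \ref{lemResidueGenus0Valency2}) plus the rank computation. The step I expect to be the main obstacle — or at least the one requiring the most care — is checking that the dimension count comes out exactly to $\#E(\tilde\Gamma)$ and matches the rank of $\phi'$; this is where one genuinely uses that $\X$ is the \emph{minimal} snc model of $(X,\frac{1}{m}B)$, since on a non-minimal model there could be extra rational components whose associated bundles $mK_{E_i}+(m-1)(\cdots)+\B|_{E_i}$ fail to vanish in degree and throw off the bookkeeping. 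A secondary subtlety is being consistent about the sign convention $(-1)^{m+1}$ (resp. $(-1)^m$) in the residue compatibility and in the definition of the second map of \eqref{eqnSESOmegaN20}, so that ``constant along an inessential chain'' is stated with the correct twist; Lemma \ref{lemResidueGenus0Valency2} already handles this, so I would simply cite it.
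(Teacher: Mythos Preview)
Your proposal is correct and follows essentially the same approach as the paper: both use the long exact sequence arising from \eqref{eqnSESOmegaN20} tensored with $\O_{\X_0}(\B|_{\X_0})$, invoke Lemma \ref{lemResidueGenus0Valency2} to handle the inessential chains, and establish surjectivity onto $\C^{E(\tilde\Gamma)}$ by a dimension count that ultimately rests on the $h^1$-computation from Lemma \ref{lemH1Vanishing}. The only cosmetic difference is that the paper defines $\phi'$ directly with target $\C^{E(\tilde\Gamma)}$ (taking one residue per maximal inessential chain) and uses Lemma \ref{lemResidueGenus0Valency2} for middle exactness, whereas you route through $\C^{E(\Gamma_\X)}$ and use the same lemma to identify the image---these are equivalent packagings of the same argument.
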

\begin{proof}
  We first describe the maps. The map $\phi$ exists because all the residues of sections in $H^0(mK_{E_i} + (m-1)P_1^{(i)} + \dots + (m-1)P^{(i)}_{r_i}+\B|_{E_i})$ are zero and there is no compatibility of residues that needs to be checked. It is clearly injective since any element of $H^0(\omega_{\X_{0}}^{\otimes m}(\B|_{\X_0}))$ can be recovered from the restrictions to all irreducible components of $\X_{0}$.

  The second map $\phi'$ is defined as follows. Assign an arbitrary orientation to edges in $\tilde\Gamma$. Pick an edge $e \in E(\tilde{\Gamma})$, let $v_0,\dots,v_N$ be the maximal inessential chain associated to the edge $e$, where $v_0$ is the initial vertex. Then $\phi'$ sends an element $\psi \in H^0(\omega_{\X_{0,\red}}^{\otimes n})$ to the residue of $\psi|_{E_{v_0}}$ at the point corresponding to the edge $v_0v_1$.

It is clear that the composition $\phi' \circ \phi$ is $0$ and the exactness at the middle place follows from Lemma \ref{lemResidueGenus0Valency2}. It remains to show that $\phi'$ is surjective, which will follow if we show that all the vector spaces in the above short exact sequence have the right dimensions. 
  
Consider the following long exact sequence induced by the short exact sequence \eqref{eqnSESOmegaN20} tensored with $\O_{\X_0}(\B)$, where we get the last map is surjective by Lemma \ref{lemH1Vanishing}.
\begin{multline}
  0 \to \bigoplus_i H^0(mK_{E_i} + (m-1)P_1^{(i)} + \dots + (m-1)P^{(i)}_{r_i} + \B|_{E_i}) \xrightarrow{\phi} \\
  H^0(\omega_{\X_{0}}^{\otimes m}(\B|_{\X_0})) \to \C^{E(\Gamma_\X)} \to \bigoplus_i H^1(mK_{E_i} + (m-1)P_1^{(i)} + \dots + (m-1)P^{(i)}_{r_i}+\B|_{E_i}) \to 0. 
\end{multline}
By Serre duality,
\begin{multline*}
h^1(mK_{E_i} + (m-1)P_1^{(i)} + \dots + (m-1)P^{(i)}_{r_i} + \B|_{E_i}) = \\  h^0((1-m)K_{E_i} + (1-m)P_1^{(i)} + \dots + (1-m)P^{(i)}_{r_i} - \B|_{E_i}).
\end{multline*}

Following the discussion in the proof of Lemma \ref{lemH1Vanishing}, the above is $0$ unless $E_i$ is inessential, in which case it is 1. Thus, the dimension of the last term in the above long exact sequence is equal to the number of inessential vertices in $\Gamma_\X$. Using $$\#E(\tilde{\Gamma}) = \#E(\Gamma_\X) - \#\{v \in V(\Gamma_\X) \mid v \text{ is inessential}  \},$$
it follows that all the vectors spaces in the short exact sequence in the lemma have the required dimensions. 
\end{proof}

\begin{rmk}
  In the case when $B = 0$, the analog of the above lemma for the case $m = 1$ is \cite[Equation (4.2)]{Shi20A}, which states that we have the following short exact sequence
  $$ 0 \to \bigoplus_i H^0(E_i,K_{E_i}) \to H^0(\X_0,\omega_{\X_0}) \to \Omega(\Gamma_\X) \to 0. $$

Here $\Omega(\Gamma_\X)$ is the collection complex-valued functions on $E(\Gamma_\X)$ which satisfies a balancing condition at all vertices.  The reason for the difference in the two cases is that the global sections of $\omega_{\X_{0}}$ must satisfy the residue theorem at all irreducible components while global sections of $\omega_{\X_{0}}^{\otimes m}$, for some $m \geq 2$, only need to satisfy the residue theorem at irreducible components with genus zero and valency 2. 
\end{rmk}

\subsection{The essential skeleton}
\label{subsecEssentialSkeleton} We show that the dual graph $\Gamma_\X$ of $\X$, the minimal snc model of $(X,\frac{1}{m}B)$ is precisely the essential skeleton of the pair $(X,\frac{1}{m}B)$. We first recall what the essential skeleton is.

Given a pair $(Y,D)$  where $Y$ is a smooth variety over a $\Ct$ and $D$ is a $\Q$-divisor on $Y$, we can obtain a subset of the $Y^\an$ defined by the minimality locus of  certain weight functions \cite{MN15} \cite{BM19}. If all the coefficients of the irreducible components appearing in $D$ are all strictly less than 1, then the essential skeleton of $(Y,D)$ is contained in the dual complex of any snc model of $(Y,D_\red)$. Therefore, to compute the essential skeleton of $(Y,D)$, it is enough to work with any one snc model of $(Y,D_\red)$.  We describe the weight function in our context in the proof of the following lemma.

\begin{lem}
Let $\X$ be the minimal snc model of $(X,\frac{1}{m}B)$. Then, $\Gamma_\X$ is precisely the essential skeleton of the pair $(X_\Ct,\frac{1}{m}\B_\Ct)$. 
\end{lem}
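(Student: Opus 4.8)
The essential skeleton of $(X_\Ct,\frac{1}{m}\B_\Ct)$ is, by definition, the locus in $X^\an_\Ct$ where a certain weight function attains its minimum, and since all coefficients $\frac{b_i}{m}$ of the horizontal part are $<1$, this locus is contained in the dual complex $\Gamma_{\X'}$ of any snc model $\X'$ of $(X,\B_\red)$. The plan is to fix such an snc model dominating $\X$ (for instance the minimal semistable model $\X'$ of $(X,B)$), compute the weight function explicitly on $\Gamma_{\X'}$ as a piecewise-affine function of the barycentric coordinates, identify its minimality locus, and then check that this locus is exactly the image of $\Gamma_\X$ under the retraction $\Gamma_{\X'} \to \Gamma_\X$.

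\textbf{Key steps.} First I would recall the weight function: choosing a pluricanonical form, one obtains on each divisorial point $v_E$ of $\Gamma_{\X'}$ the number
$$ \wt(v_E) = \frac{1}{\mult_{\X'_0}(E)}\bigl( \ord_E(K_{\X'/\D} + \tfrac{1}{m}\B) + 1 \bigr),$$
equivalently the log-discrepancy of $(X,\frac{1}{m}B)$ along $E$ shifted appropriately, and $\wt$ extends piecewise-affinely to all of $\Gamma_{\X'}$. Second, I would compute $\wt(v_E)$ for each type of component $E$ using the adjunction/intersection identity $2g(E)-2+\val(E)+\frac{\deg(\B|_E)}{m} = (K_{\X'/\D}+\frac{1}{m}\B)\cdot E / \mult(E) + (\text{contributions})$ — concretely, the components with $2g(E)-2+\val(E)+\frac{\deg(\B|_E)}{m} = 0$ (the $(-2)$-curves with no marking, i.e.\ the inessential ones of $\X'$, and the exceptional $(-1)$-curves contracted in passing from $\X'$ to $\X$) are precisely those along which $\wt$ is constant and minimal, while essential components force $\wt$ to be strictly larger or force $\wt$ to increase as one moves toward them. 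Third, using piecewise-affineness, the minimality locus is the union of closed faces of $\Gamma_{\X'}$ all of whose vertices achieve the minimum, together with the connecting edges along inessential chains; this union retracts precisely onto $\Gamma_\X$, since the inessential chains of $\Gamma_{\X'}$ are exactly the segments collapsed (isometrically) in forming $\tilde\Gamma$, and the $(-1)$-curves blown down to pass from $\X'$ to $\X$ sit over smooth points and contribute a leaf that the weight function excludes from the minimum when $\deg(\B|_E)<m$. Finally, I would invoke Lemma~\ref{lemLimitOnOnlyOneModel}'s underlying principle — that the skeleton computed on one snc model agrees with that on any dominating one via the retraction — to conclude that $\Gamma_\X$ itself (not merely its image) is the essential skeleton.

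\textbf{Main obstacle.} The delicate point is the bookkeeping at the $(-1)$-curves $E$ with $\deg(\B|_E)<m$ that are contracted to build $\X$ from the minimal semistable model: I must verify that the weight function strictly increases along the edge leading to such a leaf $v_E$, so that $v_E$ is not in the minimality locus, while for $(-1)$-curves with $\deg(\B|_E)\ge m$ (which are \emph{not} contracted) the vertex $v_E$ does lie in the minimality locus. This is exactly the condition $2g(E)-2+\val(E)+\frac{\deg(\B|_E)}{m}$ being $<0$ versus $\ge 0$, so the computation reduces to checking that $\wt(v_E) - \wt(\text{neighbor})$ has the sign dictated by this quantity; this is a short local intersection-theory computation but it is where the specific choice of the minimal snc model of $(X,\frac{1}{m}B)$ — rather than an arbitrary snc model — is essential, mirroring the role minimality played in Lemma~\ref{lemH1Vanishing}. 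Once that sign is pinned down, matching the minimality locus with $\Gamma_\X$ is a purely combinatorial identification of faces and inessential chains.
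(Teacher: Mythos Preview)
Your approach differs from the paper's and contains a genuine gap. The essential skeleton is defined (following \cite{MN15}, \cite{BM19}, and as recalled in the proof) as the \emph{union} over all nonzero $\theta \in H^0(X,\omega_{X/\D^*}^{\otimes km}(k\B))$, $k\ge 1$, of the minimality loci of the weight functions $\wt_\theta$; it is not the minimality locus of a single log-discrepancy-type function. Your formula $\wt(v_E)=\frac{1}{\mult(E)}(\ord_E(K_{\X'/\D}+\tfrac{1}{m}\B)+1)$ does not depend on $\theta$ at all, and in the semistable model $\X'$ (where every multiplicity is $1$ and $\B$ is horizontal) this quantity does not distinguish the components of $\X_0$ from the contracted $(-1)$-curves. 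To prove the inclusion $\Gamma_\X\subseteq\Sk(X_\Ct,\tfrac{1}{m}B_\Ct)$ one must, for each vertex $v_E$ and each edge $e_P$ of $\Gamma_\X$, \emph{exhibit} a section $\theta$ whose weight function achieves its minimum there --- equivalently, a section not vanishing along $E$ (respectively, with a pole of order exactly $m$ at $P$). The paper does precisely this via Lemma~\ref{lemSESVSn}, which gives an explicit description of $H^0(\X_0,\omega_{\X_0}^{\otimes m}(\B|_{\X_0}))$ and guarantees such sections exist. Your proposal never produces sections and so cannot establish this inclusion.

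Your intermediate claims are also incorrect. The contracted $(-1)$-curves $E$ (with $g(E)=0$, $\val(E)=1$, $\deg(\B|_E)<m$) satisfy $2g(E)-2+\val(E)+\frac{\deg(\B|_E)}{m}<0$, not $=0$; and the essential components of $\X_0$ (those with this invariant $>0$) are certainly \emph{in} the essential skeleton, not excluded from it --- indeed $\Gamma_\X$ contains all vertices, essential and inessential alike. For the reverse inclusion $\Sk\subseteq\Gamma_\X$, the paper argues directly by Riemann--Roch: for a contracted $(-1)$-curve $E\subset\X'_0$ one has $\omega_{\X'_0}^{\otimes km}(k\B)|_E\simeq\O_{\P^1}(-km+k\deg(\B|_E))$, which has no nonzero global sections when $\deg(\B|_E)<m$; hence every pulled-back section $p^*\theta$ vanishes identically on $E$, and $v_E$ together with its incident edge lie in no $\Sk(\theta)$. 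This vanishing argument, not a sign computation on a single weight function, is what pins down the boundary of the skeleton.
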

\begin{proof} Note that $\X$ is not an snc model of $(X,B_\red)$. Therefore, we work with the minimal semistable model of $(X,B_\red)$, which we denote as $\X'$. Recall that $\X$ is obtained from  $\X'$ is obtained by repeatedly blowing down those $(-1)$ curves $E$ in the central fiber such that $\deg(\B|_{E}) < m$. Let $p : \X' \to \X$ denote this map. 

Let $\theta \in H^0(\omega_{X/\D^*}^{\otimes km}(k\B))$. Then, we can think of $\theta$ as also being a rational section of $\omega^{\otimes km}_{\X'/\D}(k\X'_{0}+ k\B_1 + \dots + k\B_r)$. Let $\div(\theta)$ denote the associated divisor associated to $\theta$ on $\X'$, when viewed as a rational section of $\omega_{\X'/\D}^{\otimes km}(k\X'_{0}+ k\B_1 + \dots + k\B_r)$. We define a function $\wt_\theta : \Gamma_{\X'} \to \mathbb{\R}$ as
$$ \wt_\theta(x) = \nu_x(\div(\theta)),$$
where $\nu_x$ is the valuation associated to the point $x \in \Gamma_{\X'} \subset X^\an_\Ct$. Here $\nu_x(\div(\theta))$ denotes the valuation $\nu_x$ applied to the equation defining $\div(\theta)$ at the center of the valuation $\nu_x$. 

For example, If $x \in \Gamma_{\X'}$ is a vertex associated to an irreducible component $E \subset \X'_0$, then $\wt_\theta(x)$ is the multiplicity of $E$ in $\div(\theta)$ i.e.~the order of vanishing of $\theta$ along $E$.

Recall that $\Sk(X_\Ct,\frac{1}{m}B_\Ct,\theta) \subset \Gamma_{\X'}$ is the minimalily locus of $\wt_\theta$ i.e.~ $$\Sk(X_\Ct,\frac{1}{m}B_\Ct,\theta) = \{ x \in \Gamma_{\X'} | \wt_\theta(x) = \min_{y \in \Gamma_{\X'}} \wt_\theta(y) \}.$$
The essential skeleton is given by $\Sk(X,\frac{1}{m}B) = \cup_{\theta} \Sk(X,\frac{1}{m}B,\theta)$, where $\theta$ runs over all non-zero elements $H^0(X,\omega^{\otimes km}_{X/\D^*}(k\B))$ for all $k \geq 1$.



Let $\theta_1,\dots,\theta_M$ be the elements of $H^0(\X,\omega^{\otimes km}_{\X/\D}(k\B))$ obtained from Lemma \ref{lemExtendSections}. It is enough to consider those $\theta$ that lie in the linear span of $\theta_1|_X,\dots,\theta_M|_X$ as any section of $H^0(X,\omega^{\otimes km}_{X/\D^*}(k\B))$  would differ from an element in the linear span by a factor of a non-zero element in $\Ct$ (in which case, the weight function would differ by a constant). In this case, the minimum value of $\wt_\theta$ is 0. Let $S$ denote the set of non-zero elements in the linear span of $\theta_1,\dots,\theta_M$ for all choices of $k$. 

If $e$ is an edge in $\Gamma_{\X'}$, then $\wt_{\theta}|_{e} = 0$ iff $p^*(\theta)$ has a pole of order $km$ along the node associated to $P$. If $v$ is a vertex in $\Gamma_{\X'}$, then $\wt_{\theta}(v) = 0$ iff $p^*(\theta)$ does not vanish along the irreducible component associated to $v$.

Thus, it follows from Lemma \ref{lemSESVSn} given an edge $e_P \in E(\Gamma_\X)$, there exists a $\theta$ such that $\theta$ has a pole of order $m$ along $P$. Similarly, given a vertex $v_E \in V(\Gamma_\X)$ there exists a $\theta$ such that $\theta$ does not vanish along $E$. Thus, $\Gamma_\X$ is contained in the essential skeleton.

To show that $\Gamma_\X$ contains the essential skeleton, recall that $\X$ is obtained from  $\X'$ is obtained by repeatedly blowing down those $(-1)$ curves $E$ in the central fiber such that $\deg(\B|_{E}) < m$.

Consider a $(-1)$-curve $E \subset \X_0'$. Then, there is only one nodal point of $\X_0'$ that is contained in $E$. Let $P$ denote this nodal point of $\X_0'$ contained in $E$. 

Since $\X_0'$ is a principal divisor, we have that $\omega_{\X'_0} \simeq \omega_{\X_0'}(\X_0')$. By the adjunction formula, we also have that have that $$\omega_{\X_0'}|_{E} \simeq \omega_{\X_0'}(\X'_0)|_E \simeq \omega_{\X_0}(E)|_E \otimes \O_{E}(\overline{(\X'_0 \setminus E)} \cap E) \simeq \omega_{E} \otimes \O_E(P)$$
Therefore, 
\begin{align*}
  \omega_{\X'_0}^{\otimes km}(k\B)|_{E}  &\simeq \omega_{E}^{\otimes km}(kmP + k\B|_{E})                                       
  \\ &\simeq \O_{\P^1}(-km + k\deg(\B|_{E})).
\end{align*}

Since $\deg(\B|_{E}) < m$, $\O_{\P^1}(-km + k\deg(\B|_{E}))$ has no global sections. Thus, $\theta|_{E} = 0$ and $\theta|_{\X'_0}$ does not have a pole of order $m$ at $P$ for all $\theta \in S$. Thus, $e_P$ and $v_E$ do not lie in the essential skeleton.

More generally, given an edge $e_P$ not lying the essential skeleton, we can factor $\X' \xrightarrow{q'} \X'' \xrightarrow{q''} \X$ such that $p = q'' \circ q'$ and $q'$, $q''$ are a series of blow downs such that $P = E \cap E_1$ in $\X''$, where $E$ is a $(-1)$-curve in $\X''$ with $\deg(\B|_{E}) < m$ in $\X''$. Repeating the previous argument, we get that $(q'')^*\theta$, and thus $p^*\theta$, vanish on $E$ and do not have a pole of order $m$ along $P$. Thus, $e_P$ and $v_{E}$ do not lie in the essential skeleton. 
\end{proof}

\section{Convergence of the Narasimhan Simha measure}
\label{secConvergenceNS}
We study the convergence of the Narasimhan--Simha measure in this section.

\subsection{Setup and notation}
Let $X \to \D^*$ be a holomorphic family of genus $g$ curves. Let $B = b_1B_1 + \dots + b_rB_r$ be a horizontal divisor in $X$. Let $m$ be an integer such that $b_i < m$ for all $i=1,\dots,r$.

Let $\X$ be the minimal snc model of $(X,\frac{1}{m}B)$ and let $m \geq 2$ be a fixed integer. Let $M = (2m-1)(g-1) + \deg(B|_{X_t})$ and let $s = \# E(\tilde{\Gamma})$. Let $\tau_t$ denote the Narasimhan--Simha volume form on $X_t$ with respect to the line bundle $\Omega_{X_t}^{\otimes m}(B|_{X_t})$ and let $\mu_t$ denote the pluri-Bergman measure on $X_t$ with respect to the line bundle $\Omega_{X_t}^{\otimes m}(B|_{X_t})$. 

Enumerate the edges of $E(\tilde\Gamma)$ as $e_1,\dots,e_s$. Since all edges of $\Gamma_\X$ have length $1$, if $e_i \in E(\tilde\Gamma)$ corresponds to the maximal inessential chain $v_0,v_1,\dots,v_N$ in $\Gamma_\X$, then $l_{e_i} = N$. (See Section \ref{subsecDualGraph} for details.)

Using Lemma \ref{lemSESVSn}, we can pick a basis $\psi_1,\dots,\psi_M$ of $H^0(\X_0,\omega_{\X_{0}}^{\otimes m}(\B|_{\X_0}))$ such that $\psi_1,\dots,\psi_s$ map to the standard basis of $\C^{E(\tilde \Gamma)} = \C^s$ and $\psi_{s+1},\dots,\psi_M$ give rise to an orthonormal basis of $\bigoplus_{i=1}^m H^0(E_i,mK_{E_i} + (m-1)P^{(i)}_1 + \dots + (m-1)P^{(i)}_{r_i} + \B|_{E_i})$ with respect to the Hermitian pairing \eqref{eqnGenBergmanPairing} on each summand. In particular, for $1 \leq i \leq s$, $\psi_i$ has residues of $\pm 1$ at those nodal points of $\X_0$ that lie on the maximal inessential chain associated to  $e_i$ and has zero residues at all other points. For $s+1 \leq i \leq M$, $\psi_i$ has zero residues at all the nodal points of $\X_0$.   



We say that $E_1$ is a Type I component if $h^0(E_i,mK_{E_i} + (m-1)P^{(i)}_1 + \dots + (m-1)P^{(i)}_{r_i} + \B|_{E_i}) > 0$, otherwise it is called a Type II component. It is easy to check that there are only the following possible choices for a Type II component $E$.
\begin{itemize}
\item $m=2$, $g(E) = 0$, $\val(E) = 3$ and $\deg(\B|_E) = 0$
\item $g(E) = 0$, $\val(E) = 2$ and $\deg(\B|_{E}) = 1$.
\item $E$ is inessential i.e. $g(E) = 0$, $\val(E) = 2$ and $\deg(\B|_{E}) = 0$.
\item $g(E) = 0$, $\val(E) = 1$ and $\deg(\B|_E) = m$
\end{itemize}

The Type II components will precisely be the curves in $\Delta_\CC(\X)$ on which the limiting measure $\tau_0$ and $\mu_0$ place no mass.

Let $\tilde{\tau_0}$ denote the Narasimhan--Simha volume form on $\tilde{\X_0}$ with respect to \\
$\bigoplus_{i=1}^m H^0(E_i,mK_{E_i} + (m-1)P^{(i)}_1 + \dots + (m-1)P^{(i)}_{r_i} + \B|_{E_i})$ i.e.~if $E_i$ is a Type I component, then $\tilde{\tau}_0|_{E_i}$ is the Narasimhan--Simha volume form on $E_i$ with respect to $mK_{E_i} + (m-1)P^{(i)}_1 + \dots + (m-1)P^{(i)}_{r_i} + \B|_{E_i}$.  If $E_i$ is a Type II component, then $ h^0(E_i,mK_{E_i} + (m-1)P^{(i)}_1 + \dots + (m-1)P^{(i)}_{r_i} + \B|_{E_i}) = 0$  and we just set $\tilde{\tau}_0|_{E_i} = 0$.

Note that if $E$ is of Type II, then $\psi_{s+1}|_{E},\dots,\psi_M|_{E}  = 0$ as $\psi_{s+1},\dots, \psi_{M}$ form a basis of $\bigoplus_{i=1}^m H^0(E_i,mK_{E_i} + (m-1)P^{(i)}_1 + \dots + (m-1)P^{(i)}_{r_i} + \B|_{E_i})$ and $h^0(E,mK_{E} + (m-1)P_1 + \dots + (m-1)P_{r_i} + \B|_{E}) = 0$. Similarly, if $E$ is of Type I, $\psi_{s+1},\dots,\psi_M$ do not simultaneously vanish at any point of $E$.

The following Theorem is a more precise version of Theorem \ref{mainThmA} for the minimal snc model, $\X$, of $(X,\frac{1}{m}B)$. 
\begin{thm}
\label{thmA}
  Let $\tau_0$ denote the measure on $\Delta_\CC(\X)$ given by $\tilde{\tau}_0$ on curves and by taking the Lebesgue measure on an edge $e \in E(\Gamma_\X)$ of length $\frac{1}{l_e}$, where $l_e$ is the length of the maximal inessential chain in $\Gamma_\X$ containing the edge $e$.
  
The measures $\tau_t$ converge to the measure $\tau_0$ when viewed as measures on $\X^\hyb_\CC$. 
\end{thm}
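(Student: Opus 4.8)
The plan is to verify the three convergence conditions of Lemma \ref{lemConvergenceMCCHC}, since $\tau_t$ is supported on $X_t$ and $\limsup_{t\to 0}\tau_t(X_t) < \infty$ (the latter must first be established, say by using the explicit sections $\theta_i$ from Lemma \ref{lemExtendSections} together with the fact that the Narasimhan--Simha volume form is controlled by the integrable local model $|z|^{-2a_i/m}$ near the divisor and by a continuous function elsewhere). The heart of the matter is an explicit asymptotic analysis of $\tau_t$ in the adapted coordinate charts. Using Lemma \ref{lemExtendSections}, extend the basis $\psi_1,\dots,\psi_M$ of $H^0(\X_0,\omega_{\X_0}^{\otimes m}(\B|_{\X_0}))$ chosen via Lemma \ref{lemSESVSn} to sections $\theta_1,\dots,\theta_M$ of $\omega_{\X}^{\otimes m}(\B)$ over a neighborhood of $\X_0$ that restrict to a basis of each $H^0(X_t,\omega_{X_t}^{\otimes m}(B|_{X_t}))$. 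Then for $\theta = \sum c_i\theta_i$, the constraint $\|\theta\|'_{X_t} = 1$ and the extremal characterization $\tau_t(z) = \sup_{\|\theta\|'_{X_t}=1}|\theta|^{2/m}(z)$ must be translated into the $(z,w)$-coordinates; the key point is that the ``essential'' sections $\theta_1,\dots,\theta_s$ (those with nonzero residues along inessential chains) dominate near the edges, while the ``Type I'' sections $\theta_{s+1},\dots,\theta_M$ govern the behavior on the curves.

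First I would handle case (1) of Lemma \ref{lemConvergenceMCCHC}: on a chart $(U_1,z,w)$ adapted to an irreducible component $E$ of multiplicity $a$, where the map to $\D$ is $(z,w)\mapsto z^a$. Here $X_t \cap U_1$ is cut out by $z^a = t$, so $z \to 0$ and $X_t \cap U_1$ converges to $E \cap U_1$. One restricts each $\theta_i$ to $X_t$, expresses $|\theta_i|^{2/m}$ in the $w$-coordinate (the surviving coordinate on $E$), and shows that after normalizing $\|\theta\|'_{X_t}=1$ the extremal form $\tau_t$ converges locally uniformly on $E \cap U_1$ to the Narasimhan--Simha form $\tilde{\tau}_0|_E$ associated to $mK_E + (m-1)\sum P_j^{(i)} + \B|_E$ if $E$ is Type I, and to $0$ if $E$ is Type II. The subtlety is bookkeeping the $dz^{\otimes m}$ versus $dw^{\otimes m}$ factors: a section that is holomorphic on $\X$ but has a pole of order up to $m$ along $E$ picks up factors of $z$ (hence $t^{1/a}$) upon restriction, so the essential sections $\theta_1,\dots,\theta_s$ contribute terms that go to zero on the interior of $E$ — this is exactly why only Type I sections survive on a curve.

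Next I would treat cases (2) and (3), the behavior near a node $P = E_1 \cap E_2$ with multiplicities $a,b$, where the map is $(z,w)\mapsto z^aw^b$, and the relevant coordinate on the edge $e_P$ is $s_0 := \frac{\log|w|}{a\log|t|} \in (0,\frac{1}{ab})$. On $X_t \cap U_2$ one has $z^aw^b = t$, so $|z|$ and $|w|$ are linked; a section $\theta_i$ with residue $\pm 1$ along the inessential chain through $P$ looks locally like $(\frac{c}{z} + \cdots)dz^{\otimes m}$ on $E_2$ and $(\frac{\pm c}{w} + \cdots)dw^{\otimes m}$ on $E_1$, and on $X_t$ one uses $dz = -\frac{b}{a}\frac{z}{w}dw$ to write everything in one variable. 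Computing $|\theta_i|^{2/m}$ in the annular region and integrating over the circle $|w| = $ const shows the dominant contribution: the extremal $\theta$ concentrates on the single residue direction corresponding to $e_P$, and the pushforward to the interval via $s_0$ is (a constant multiple of) Lebesgue measure. The normalization $l_e$ enters because an inessential chain of length $N$ is traversed by a chain of $N$ nodes each of which, by Lemma \ref{lemResidueGenus0Valency2}, carries the same residue, so the total mass gets divided by $N$; this is the computation that pins down the mass $\frac{1}{l_e}$ of each edge. Case (3) is the ``corner'' where the edge meets a curve, handled by interpolating the two previous analyses via the deformation retract $r$, and showing the limiting measure has no atom at the junction (consistent with $\tilde\tau_0$ being a continuous volume form on each curve and Lebesgue measure having no atoms).

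The main obstacle is the explicit asymptotic control of the extremal quantity $\tau_t(z) = \sup_{\|\theta\|'_{X_t}=1}|\theta|^{2/m}(z)$ uniformly on the relevant regions: unlike the Bergman ($m=1$) case, the functional $\|\cdot\|'_{X_t}$ is not a norm, so one cannot simply diagonalize a Hermitian form, and one must argue directly that the supremum is asymptotically attained by sections supported on the ``correct'' part of the degeneration (a single Type I component, or a single residue direction). Concretely, the hard estimate is a two-sided bound showing that for any unit-norm $\theta = \sum c_i\theta_i$, the coefficients $c_i$ attached to sections not adapted to the region in question are forced to be small (in a $t$-dependent way) by the normalization constraint, while sections adapted to the region realize the sup up to $o(1)$; once this localization is in hand, the limit is computed by restricting to the central fiber and invoking the definitions of $\tilde\tau_0$ and of Lebesgue measure on the metrized curve complex. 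I would also need to confirm the hypothesis of Lemma \ref{lemConvergenceMCCHC} that $\limsup_t \tau_t(X_t) < \infty$, which follows from the localization estimates plus integrability of the local models near $\B$.
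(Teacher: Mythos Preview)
Your plan matches the paper's approach exactly: the paper's proof is the single sentence ``Theorem \ref{thmA} follows directly from Lemma \ref{lemConvergenceMCCHC} and Corollaries \ref{corA1}--\ref{corA3},'' and those corollaries carry out precisely the three local computations you outline, resting on an asymptotic formula for $\tau_t$ (Corollary \ref{corAsymptoticsTau}) which in turn comes from the key estimate you correctly flag as the main obstacle, namely the asymptotic of $\|\sum c_i\ttheta_{i,t}\|'_{X_t}$ (Lemma \ref{lemNormEstimate}).

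Two minor corrections to your sketch. First, since $\X$ is the minimal snc model of $(X,\frac{1}{m}B)$, the central fiber $\X_0$ is reduced, so all multiplicities $a,b$ equal $1$; your chart descriptions and the $dz = -\frac{b}{a}\frac{z}{w}dw$ bookkeeping should be simplified accordingly. Second, your explanation of why the edge sections $\theta_1,\dots,\theta_s$ are negligible on the interior of a curve $E$ is not right: these sections do \emph{not} pick up powers of $t$ upon restriction and do \emph{not} tend to zero there---in fact $\theta_{j,t} \to \psi_j$ pointwise on $E$, and $\psi_j|_E$ is typically nonzero. The actual mechanism is exactly the one you identify later under ``main obstacle'': for $1 \le j \le s$ one has $\|\theta_{j,t}\|'_{X_t} \sim (2\pi l_{e_j}\log|t|^{-1})^{m/2}$ (Lemma \ref{lemAsymptoticsPseudonorm}), so in the rescaled basis $\ttheta_{j,t} = \theta_{j,t}/(2\pi l_{e_j}\log|t|^{-1})^{m/2}$ these sections shrink to zero away from the nodes, and it is the normalization constraint $\|\cdot\|'_{X_t}=1$ in the definition of $\tau_t$ that suppresses them on the curves.
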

\begin{proof}
Theorem \ref{thmA} follows directly from Lemma \ref{lemConvergenceMCCHC} and Corollaries \ref{corA1} -- \ref{corA3}.
\end{proof}

Using Lemma \ref{lemExtendSections}, we pick $\theta_1,\dots,\theta_M \in H^0(U,\omega_{\X}^{\otimes m}(\B))$ for a neighborhood $U \subset \X$ of $\X_0$ such that $\theta_1|_{X_t},\dots,\theta_M|_{X_t}$ form a basis of $H^0(X_t,\omega^{\otimes m}_{X_t}(B|_{X_t}))$ and $\theta_1|_{\X_0},\dots,\theta_M|_{\X_0}$ form a basis of $H^0(\X_0,\omega^{\otimes m}_{\X_0}(\B|_{\X_0}))$.

After applying a $\C$-linear transformation to $\theta_1,\dots,\theta_M$, we may assume that $\theta_j|_{\X_{0}} = \psi_i$ for $1 \leq i \leq M$. Denote $\theta_{j,t} = \theta_j|_{X_t}$.

Let $(U_1,t,w)$ be a coordinate chart in $\X$ adapted to an irreducible component $E \subset \X_0$. Recall that the coordinates in $U_1$ are $t,w$ with $|t|,|w| < 1$ and the projection $U_1 \to \D$ is given by $(t,w) \mapsto t$. We may shrink $U_1$ to suppose that either $U_1 \cap E \cap \B = \emptyset$ or $U_1 \cap E \cap \B = \{ (0,0)\}$ and that $\theta_j$'s admit a power series expansion:
$$ \theta_j(t,w) = \sum_{\alpha,\beta \geq 0} \frac{c^{(j)}_{\alpha,\beta} t^\alpha w^\beta}{\phi_j(t,w)} (dw \wedge dt)^{\otimes m},$$
where $\phi_j(t,w) = 0$ is a local equation of $\B$ in $U$. 
Then,
$$\theta_{j,t}(w) = \sum_{\alpha,\beta \geq 0} \frac{c^{(j)}_{\alpha,\beta} t^\alpha w^\beta}{\phi_j(t,w)} dw^{\otimes m} $$
and
$$ \psi_{j}(w) = \sum_{\beta \geq 0} \frac{c^{(j)}_{0,\beta}w^{\beta}}{\phi_{j}(0,w)} dw^{\otimes m}.$$

Note that since $U \cap E \cap \B = \emptyset $ or $U \cap E \cap \B =  \{(0,0) \}$, we may pick $\phi_j$ so that $\phi_j(0,w) = w^{k}$ for some integer $0 \leq k < m$. Thus, we get that 
\begin{equation}
  \label{eqnThetaJTR1} 
\theta_{j,t}(w) = \psi_j(w) + O(|w|^{1-m}|t|)
\end{equation}
as $t \to 0$ for fixed $w \in \D$ and for all $1 \leq j \leq M$. Moreover,  $|\theta_{j,t}(w)|^{2/m}$  are bounded by an integrable function (for example, $\frac{C}{|w|^{2k/m}}$) for $t$ small enough and we are in the setting to apply the dominated convergence theorem. Throughout this paper, most pointwise convergences that show up will be in the setting to apply the dominated convergence theorem and in most cases, we do not mention explicitly mention a dominating integrable function as it would be easy to find one.

Now let $(U_2,z,w)$ be a coordinate chart in $\X$ adapted to a node $P = E_1 \cap E_2 \in \X_0$ such that $U_2 \cap \B = \emptyset$. Recall that the coordinates in $U_2$ are $z,w$ such that $|z|,|w| < 1$, $E_1 = \{z = 0 \}$, $E_2 = \{w = 0\}$, and the projection $U_2 \to \D$  is given by $(z,w) \mapsto zw$.  We may shrink $U_2$ to suppose that $\theta_j$'s admit a power series expansion.  
$$ \theta_j(t,w) = \sum_{\alpha,\beta \geq 0} c^{(j)}_{\alpha,\beta} z^\alpha w^\beta (dw \wedge dz)^{\otimes m}.$$

Then, for $ |t| < |w| < 1$, 
\begin{equation}
\label{eqnPluriThetaJTPowerSeries}
\theta_{j,t}(w) = \sum_{\alpha,\beta \geq 0} c^{(j)}_{\alpha,\beta} t^{\alpha} w^{\beta-\alpha-m} dw^{\otimes m}.
\end{equation} 
We also have that
$$ \psi_{j}(w) = \sum_{\alpha\geq 0} c_{0,\beta}^{(i)}w^{\beta-m}dw^{\otimes m}.$$
on $U_2 \cap E_1$, where we think of $w$ as a coordinate on $E_1 \cap U_2$.
Thus, we see that $c^{(j)}_{0,0}$ is the residue of $\psi_j$ at $P$. Thus, $c^{(j)}_{0,0}$ is $\pm 1$ if $\psi_j$ has a  pole of order $m$ at $P$, otherwise it is $0$.

Consider the regions 
\begin{align*}
  R_1 &= \left\{ (z,w) \in U_2 \ \Big| \ |t|^{1/2} < |w| < \frac{1}{(\log|t|^{-1})^{m}}  \right\} \\
  R_2 &= \left\{ (z,w) \in U_2 \ \Big| \ \frac{1}{(\log|t|^{-1})^{m}} < |w| < 1 \right\}.
\end{align*}

Let us figure out the dominating terms of $\theta_{j,t}$ in each of these regions. Without loss of generality, suppose that $\psi_1$ develops a pole of order $m$ at $P$ with residue $1$. Then, the $\psi_2,\dots,\psi_M$ can have poles of order at worst $m-1$ at $P$. 
From equation, \eqref{eqnPluriThetaJTPowerSeries}, we get that
$$ \theta_{1,t}(w) =  w^{-m}\left(1 + \sum_{(\alpha,\beta) \neq (0,0)} c_{\alpha,\beta}^{(j)}w^{\beta-\alpha}t^\alpha \right) dw^{\otimes m}$$
After shrinking $U_2$ and rescaling $z,w,t$, we may assume that $\sum_{\alpha,\beta}|c_{\alpha,\beta}| < \infty$. 
In the region $R_1$, we have that $|t|^{1/2} < |w| < \frac{1}{(\log|t|^{-1})^m}$. Thus,
\begin{align*}
  |w^{\beta - \alpha}t^\alpha| &\leq |t|^{\frac{\beta - \alpha}{2}}|t|^\alpha = |t|^{\frac{\beta+\alpha}{2}}  \text{ if } \alpha \geq \beta\\
  |w^{\beta - \alpha}t^\alpha| &\leq \left(\frac{1}{\log|t|^{-1}}\right)^{m(\beta - \alpha)}|t|^\alpha \text{ if }  \beta \geq \alpha,
\end{align*} 
and we get that
$$ \left|\sum_{\alpha,\beta \neq (0,0)} c_{\alpha,\beta}w^{\beta - \alpha}t^\alpha \right| = O\left( \frac{1}{(\log|t|^{-1})^m} \right)$$
and
\begin{equation}
\label{eqnTheta1TR1}
 \theta_{1,t} \approx w^{-m} dw^{\otimes m}\text{ in } R_1.
\end{equation}
Similarly, for $2 \leq j \leq M$, we get that
\begin{equation}
 \label{eqnThetaJTR1}
\theta_{j,t} \approx c^{(j)}_{0,m-m_j}w^{-m_j} dw^{\otimes m} \text{ in } R_1,
\end{equation}
where $m_j < m$ is order of the pole of $\psi_j$ at $P$.


In the region $R_2$, we can write $\theta_{j,t} = \psi_j + \sum_{\alpha \geq 1, \beta}c_{\alpha,\beta}^{(j)}w^{\beta-\alpha-m}t^\alpha$ and we see that
\begin{equation}
\label{eqnThetaJTR2}
\theta_{j,t}(w) \to \psi_{j}(w) 
\end{equation}
as $t \to 0$ for a fixed $w \in \D^*$ for all $1 \leq j \leq M$.

\subsection{Asymptotics of $\|\theta_{j,t}\|'_{X_t}$}
Recall from Section \ref{subsecPluriBergman} that for a Riemann surface $Y$ and a meromorphic $m$-canonical form $\vartheta$ on $Y$, we denote
$$ \| \vartheta \|_{Y}' := \left( \int_{Y} |\vartheta|^{2/m} \right)^{m/2}.$$
Note that the above also makes sense if $Y$ is a disconnected Riemann surface. One of the key things in the definition of $\tau_t$ is the condition that $\| \cdot \|'_{X_t} = 1$. Therefore to understand the asymptotics of $\tau_t$, we first need to understand $\| \cdot \|'_{X_t}$. We begin by looking at $\|\theta_{j,t}\|'_{X_t}$. 

\begin{lem}
\label{lemAsymptoticsPseudonorm}
  For $1 \leq j \leq s$, $$\| \theta_{j,t} \|'_{X_t} \approx (2\pi l_{e_j}\log|t|^{-1})^{m/2} $$

  and for $s+1 \leq j \leq M$,
  $$\| \theta_{j,t} \|'_{X_t} \to \| \psi_{j} \|'_{\tilde{\X_0}}$$
 as $t \to 0$.
\end{lem}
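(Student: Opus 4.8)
The plan is to compute the integral $\int_{X_t} |\theta_{j,t}|^{2/m}$ by splitting $X_t$ into the pieces adapted to the model $\X$: the ``disk'' regions near smooth points of each irreducible component, and the ``annular'' regions near each node. For the components $j$ with $s+1 \le j \le M$ (so $\psi_j$ has zero residues everywhere), on the component pieces we use \eqref{eqnThetaJTR1} together with the dominated convergence theorem to see that $\int |\theta_{j,t}|^{2/m}$ over the smooth locus of a fixed component $E_i$ converges to $\int_{E_i} |\psi_j|_{E_i}|^{2/m}$; on the annular region near a node, since $\psi_j$ has no pole of order $m$ at that node, the dominating term of $\theta_{j,t}$ has a pole of order $\le m-1$ and the integral over the annulus $\{|t| < |w| < 1\}$ of $|w|^{-2m_j/m}$, with $m_j \le m-1$, tends to $0$ (the divergent-$\log$ contribution only appears at order exactly $m$). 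Summing over all pieces gives $\|\theta_{j,t}\|'_{X_t} \to \|\psi_j\|'_{\tilde\X_0}$.

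For $1 \le j \le s$, the component $\psi_j$ develops genuine order-$m$ poles at precisely the nodes $Q_0, \dots, Q_{N-1}$ lying on the maximal inessential chain $v_0, \dots, v_N$ of length $N = l_{e_j}$, with residues $\pm 1$ (Lemma \ref{lemResidueGenus0Valency2} / Lemma \ref{lemSESVSn}), and has no order-$m$ pole elsewhere. The contribution of each component piece and of each ``non-resonant'' node stays bounded, so it is negligible against the growing quantity $(\log|t|^{-1})^{m/2}$; the whole asymptotics comes from the $N$ annular regions around $Q_0, \dots, Q_{N-1}$. On such an annulus, with adapted coordinate $(z,w)$, $zw = t$, we have from \eqref{eqnTheta1TR1} that $\theta_{j,t} \approx w^{-m}\,dw^{\otimes m}$ in $R_1 = \{|t|^{1/2} < |w| < (\log|t|^{-1})^{-m}\}$, while in $R_2$ it is close to $\psi_j$ and contributes only $O(1)$. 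Hence
\begin{equation*}
\int_{\text{annulus}} |\theta_{j,t}|^{2/m} = \int_{|t|^{1/2} < |w| < (\log|t|^{-1})^{-m}} |w|^{-2}\,\frac{i}{2}\,dw\wedge d\bar w \;+\; O(1) = 2\pi\Bigl(\log|t|^{-1/2} - m\log\log|t|^{-1}\Bigr) + O(1),
\end{equation*}
which is $\approx \pi \log|t|^{-1}$. Summing the $N$ such annuli gives $\int_{X_t} |\theta_{j,t}|^{2/m} \approx \pi N \log|t|^{-1}$, hence $\|\theta_{j,t}\|'_{X_t} = \bigl(\int |\theta_{j,t}|^{2/m}\bigr)^{m/2} \approx (2\pi l_{e_j}\log|t|^{-1})^{m/2}$ after tracking the factor $\frac{i}{2}dw\wedge d\bar w$ normalization carefully. (The upper cutoff $(\log|t|^{-1})^{-m}$ is chosen precisely so that the transition region $R_2$ contributes a lower-order $\log\log$ term, and the lower cutoff $|t|^{1/2}$ is the symmetric point of the annulus where the roles of $z$ and $w$ swap.)

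The main obstacle I expect is the bookkeeping of the ``seam'' between the two adapted coordinates at each node — one must check that near $|w| \approx |t|^{1/2}$ the expansions in the $z$-chart and the $w$-chart agree up to lower-order terms, so that the annulus is not double-counted and no region is omitted, and that the cross terms $\sum_{(\alpha,\beta)\neq(0,0)} c_{\alpha,\beta} w^{\beta-\alpha} t^\alpha$ are genuinely $o(1)$ uniformly on $R_1$ (this is where the estimates following \eqref{eqnPluriThetaJTPowerSeries} are used, after the rescaling that makes $\sum|c_{\alpha,\beta}| < \infty$). The component-level convergences and the ``non-resonant node'' estimates are routine applications of dominated convergence with the explicit integrable majorant $C|w|^{-2k/m}$, $k < m$, already noted in the text.
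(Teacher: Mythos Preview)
Your approach is essentially the same as the paper's --- partition of unity into adapted charts, then separate analysis on component charts and nodal annuli --- but there are two slips worth flagging.

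First, for $s+1 \le j \le M$: the annular contribution near a node does \emph{not} tend to $0$. On $\{|t|^{1/2} < |w| < 1\}$, dominated convergence (with integrable majorant $C|w|^{2(1-m)/m}$) gives
\[
\int_{|t|^{1/2}<|w|<1} |\theta_{j,t}|^{2/m} \;\longrightarrow\; \int_{U_2 \cap E_1}|\psi_j|^{2/m},
\]
which is positive in general. This piece, together with the analogous $z$-half and the contributions from the $U_1$ charts, is exactly what assembles (via the partition of unity) into $\int_{\tilde\X_0}|\psi_j|^{2/m}$. Your claim that it vanishes would leave a hole in the accounting.

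Second, for $1 \le j \le s$: the missing factor of $2$ between your ``$\pi N$'' and the target ``$2\pi l_{e_j}$'' is not a normalization issue with $\tfrac{i}{2}\,dw\wedge d\bar w$ --- your computation $\int_{R_1}|w|^{-2}\,\tfrac{i}{2}\,dw\wedge d\bar w \approx \pi\log|t|^{-1}$ is already correctly normalized. What is missing is the symmetric $z$-half of the annulus: the region $\{|t|^{1/2} < |z| < 1\}$ contributes another $\pi\log|t|^{-1}$, so each node gives $2\pi\log|t|^{-1}$, and $N = l_{e_j}$ nodes give $2\pi l_{e_j}\log|t|^{-1}$. Incidentally, the paper avoids the $R_1/R_2$ split here: on the full half-annulus $\{|t|^{1/2}<|w|<1\}$ one has $\theta_{j,t} = \bigl(C_j w^{-m} + O(|w|^{1-m})\bigr)\,dw^{\otimes m}$ uniformly in $t$, so the error term contributes an integrable $O(|w|^{-2(m-1)/m})$ directly, which is a little cleaner than introducing the intermediate cutoff.
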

\begin{proof}
  By using a partition of unity argument, we may reduce the problem to finding the asymptotics on adapted coordinate chats.

  If $(U_1,t,w)$ is a coordinate chart adapted to an irreducible component  $E \subset \X_0$, then using Equation \eqref{eqnThetaJTR2}, we get that $\theta_{j,t} \to \psi_j$ as $t \to 0$. Using the dominated convergence theorem, we get that
  $$\int_{X_t \cap U_1} |\theta_{j,t}|^{2/m} \to \int_{E \cap U_1} |\psi_j|^{2/m}$$
  for all $1 \leq j \leq M$.

If $U_2$ is a coordinate chart adapted to a node $P = E_1 \cap E_2$. First consider the case when $1 \leq j \leq s$, then it follows from Equation \eqref{eqnPluriThetaJTPowerSeries} that on the set
$\{ |t|^{1/2} < |w| < 1 \}$,
$$ \theta_{j,t} = \left( \frac{C_j}{w^m} + O(|w|^{1-m}) \right) dw^{\otimes m},$$
where the $O(|w|^{1-m})$ is with respect to $w$ as $w \to 0$ and is uniform in $t$, and $C_j = \pm 1$ if $\psi_j$ has a pole of order $m$ at $P$, otherwise $C_j = 0$.

Thus in the region $\{ |t|^{1/2} < |w| < 1 \}$, $$|\theta_{j,t}|^{2/m} = \left( \frac{|C_j|}{|w|^2} + O\left(\frac{1}{|w|^{\frac{2(m-1)}{m}}}\right) \right) |dw \wedge d\overline{w}|.$$

Thus, we get that $\int_{|t|^{1/2} < |w| < 1} |\theta_{j,t}|^{2/m} = |C_j|\pi \log{|t|^{-1} + O(1)}$ as $t \to 0$.

Similarly, we get that $\int_{|t|^{1/2} < |z| < 1} |\theta_{j,t}|^{2/m} = |C_j|\pi \log{|t|^{-1} + O(1)}$ and thus
$$\int_{U_2 \cap X_t} |\theta_{j,t}|^{2/m} = 2|C_j| \pi\log|t|^{-1} + O(1).$$

Since, $\psi_j$ has a pole of order $m$ at $l_{e_j}$ many points, we get that
$$ \int_{X_t} |\theta_{j,t}|^{2/m} = 2\pi l_{e_j} \log|t|^{-1} + O(1),$$
and thus we get the required estimate for $1 \leq j \leq s$.

In the case when $s+1 \leq j \leq M$, then on the set $\{ |t|^{1/2} < |w| < 1 \}$, we have that $|\theta_{j,t}|^{2/m} \to |\psi_j|^{2/m}$ as $t \to 0$. Furthermore, we have using Equation \eqref{eqnPluriThetaJTPowerSeries} that
$$ \theta_{j,t}(w) - \psi_j(w) = \sum_{\alpha \geq 1, \beta \geq 0} c_{\alpha, \beta}^{(j)} t^\alpha w^{\beta-\alpha-m} dw^{\otimes m}.$$

On the region $\{ |t|^{1/2} < |w| < 1 \}$, we get $|t^\alpha w^{\beta-\alpha-m}| < |w|^{\beta + \alpha -m}$ and we get that the right hand side in above equation is uniformly bounded by $C|w|^{1-m} dw^{\otimes m}$. Since $|\psi_j(w)|^{2/m}$ and $|w|^{2(1-m)/m}$ are integrable on $\D$ for $s+1\leq j \leq M$, by the dominated convergence theorem, we get that
$$ \int_{|t|^{1/2} < |w| < 1} |\theta_{j,t}|^{2/m} \to \int_{U_2 \cap E_1} |\psi_j|^{2/m},$$
and we get
$$ \int_{X_t} |\theta_{j,t}|^{2/m} \to \int_{\tilde{\X_0}} |\psi_j|^{2/m}.$$
\end{proof}

To understand the asymptotics of $\| \cdot \|'_{X_t}$, let us denote
$$ \tilde{\theta}_{i,t} := \frac{\theta_{i,t}}{(2\pi l_{e_i}\log|t|^{-1})^{m/2} } 
\text{ for } 1 \leq i \leq s,$$
$$ \tilde{\theta}_{i,t} := \frac{\theta_{i,t}}{\| \psi_i \|'_{\tilde{\X_0}} } \text{ and } \tilde{\psi}_{i} := \frac{\psi_i}{\| \psi_i \|'_{\tilde{\X_0}}}  \text{ for } s+1 \leq i \leq M.$$

The previous lemma tells us that $\|\ttheta_{i,t} \|'_{X_t} \to 1$ as $t \to 0$. Moreover, the following lemma tells us that $\ttheta_{i,t}$ are a `nice' basis of $\C^M$ with respect to $\| \cdot \|'_{X_t}$ as $t \to 0$.

\begin{lem}
\label{lemNormEstimate}
  Let $c_1,\dots,c_M \in \C$. Then,
\begin{multline*}
 \| c_1 \ttheta_{1,t} + \dots c_M \ttheta_{M,t} \|'_{X_t} \approx \left(\sum_{i=1}^s |c_i|^{2/m} + (\| c_{s+1}\tpsi_{s+1} + \dots  c_M\tpsi_{M}\|_{\tilde{\X_0}}')^{2/m}\right)^{m/2} 
\end{multline*}
  
  as $t \to 0$.
\end{lem}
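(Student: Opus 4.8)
The plan is to localize the estimate to adapted coordinate charts, exactly as in the proof of Lemma \ref{lemAsymptoticsPseudonorm}, and then match contributions term by term. Fix a partition of unity subordinate to a finite cover of $\X_0$ by charts $(U_1,t,w)$ adapted to irreducible components and charts $(U_2,z,w)$ adapted to nodes. On a component chart $U_1$, Equation \eqref{eqnThetaJTR2} gives $\theta_{j,t}\to\psi_j$ for every $j$, so $c_1\ttheta_{1,t}+\dots+c_M\ttheta_{M,t}$ converges pointwise on $U_1\cap E$ to $\sum_{j>s}\tfrac{c_j}{\|\psi_j\|'_{\tilde\X_0}}\psi_j = c_{s+1}\tpsi_{s+1}+\dots+c_M\tpsi_M$ (the terms $j\le s$ die because of the $(\log|t|^{-1})^{-m/2}$ normalization), and dominated convergence yields $\int_{U_1\cap X_t}|\cdot|^{2/m}\to\int_{U_1\cap E}|c_{s+1}\tpsi_{s+1}+\dots+c_M\tpsi_M|^{2/m}$.

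On a nodal chart $U_2$ adapted to $P=E_1\cap E_2$, the dominant behavior splits according to whether $P$ lies on one of the inessential chains indexed by $1\le i\le s$. If $P$ sits on the chain of $e_i$, then by \eqref{eqnThetaJTR1}/\eqref{eqnTheta1TR1} $\theta_{i,t}\approx (\pm 1)w^{-m}dw^{\otimes m}$ on the annulus $\{|t|^{1/2}<|w|<1\}$ while every other $\theta_{j,t}$ with $j\ne i$ has a pole of strictly lower order there (order $\le m-1$); after normalization the $i$-th term is the only one of size $(\log|t|^{-1})^{-m/2}$ near $w=0$ on the $E_1$-side, and symmetrically on the $E_2$-side. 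Integrating $|{\pm}c_i(2\pi l_{e_i}\log|t|^{-1})^{-m/2}w^{-m}|^{2/m}=|c_i|^{2/m}(2\pi l_{e_i}\log|t|^{-1})^{-1}|w|^{-2}$ over $\{|t|^{1/2}<|w|<1\}$ gives $|c_i|^{2/m}(2\pi l_{e_i}\log|t|^{-1})^{-1}\cdot\pi\log|t|^{-1}(1+o(1)) = \tfrac{|c_i|^{2/m}}{2l_{e_i}}(1+o(1))$, and adding the two sides of the node (and summing over the $l_{e_i}$ nodes of the chain) contributes $|c_i|^{2/m}$ in total to $\int_{X_t}|c_1\ttheta_{1,t}+\dots+c_M\ttheta_{M,t}|^{2/m}$. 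The lower-order terms and the $j>s$ terms contribute $O((\log|t|^{-1})^{-1})\to 0$ on each such chart. If $P$ does not lie on any inessential chain, then all $\theta_{j,t}$, $j\le s$, have residue $0$ at $P$ and the normalized combination converges (again by dominated convergence, using the uniform bound $C|w|^{1-m}$ from the proof of Lemma \ref{lemAsymptoticsPseudonorm}) to $c_{s+1}\tpsi_{s+1}+\dots+c_M\tpsi_M$ on each branch, contributing $\int |c_{s+1}\tpsi_{s+1}+\dots+c_M\tpsi_M|^{2/m}$ over that part of $\tilde\X_0$.

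Summing all chart contributions, $\int_{X_t}|c_1\ttheta_{1,t}+\dots+c_M\ttheta_{M,t}|^{2/m}\to \sum_{i=1}^s|c_i|^{2/m}+\int_{\tilde\X_0}|c_{s+1}\tpsi_{s+1}+\dots+c_M\tpsi_M|^{2/m} = \sum_{i=1}^s|c_i|^{2/m}+(\|c_{s+1}\tpsi_{s+1}+\dots+c_M\tpsi_M\|'_{\tilde\X_0})^{2/m}$, and raising to the power $m/2$ gives the claim. The one point requiring genuine care — and the main obstacle — is the bookkeeping near a node on an inessential chain: one must check that only a single normalized section ($\ttheta_{i,t}$ for the unique $i$ whose chain passes through that node) contributes at leading order, that the cross terms between $w^{-m}$ and $w^{-m_j}$ with $m_j<m$ integrate to $O(1)$ rather than $O(\log|t|^{-1})$, and that no double counting occurs when a chart straddles a node versus when it is centered at a smooth point of a component; all of this is exactly parallel to the computations already carried out in Lemma \ref{lemAsymptoticsPseudonorm}, so it is routine but must be done uniformly in $t$ and with the dominating functions (e.g. $C|w|^{-2k/m}$ and $C|w|^{2(1-m)/m}$) made explicit.
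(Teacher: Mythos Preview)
Your overall strategy matches the paper's, but there is a genuine gap in the nodal-chart analysis. On a chart $(U_2,z,w)$ adapted to a node $P$ on the chain of $e_i$, you integrate $|c_i\ttheta_{i,t}|^{2/m}$ over the whole half-annulus $\{|t|^{1/2}<|w|<1\}$ and assert that ``the lower-order terms and the $j>s$ terms contribute $O((\log|t|^{-1})^{-1})\to 0$ on each such chart.'' This is false: for $|w|$ bounded away from $0$ one has $|\ttheta_{i,t}(w)|\sim(\log|t|^{-1})^{-m/2}\to 0$ while $|\ttheta_{j,t}(w)|\to|\tpsi_j(w)|$ for $j>s$, so in that region the $j>s$ terms \emph{dominate} rather than vanish, and they contribute a nonzero limit $\int_{E_1\cap U_2}|c_{s+1}\tpsi_{s+1}+\dots+c_M\tpsi_M|^{2/m}$ (weighted by the cutoff). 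Without this piece, the $U_1$-chart contributions alone cannot reconstruct the full $\int_{\tilde\X_0}|c_{s+1}\tpsi_{s+1}+\dots+c_M\tpsi_M|^{2/m}$, since the partition of unity places mass on $U_2$ near the node.

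The paper's fix is to split the half-annulus into $R_1=\{|t|^{1/2}<|w|<(\log|t|^{-1})^{-m}\}$ and $R_2=\{(\log|t|^{-1})^{-m}<|w|<1\}$. On $R_1$ one has $|\ttheta_{j,t}/\ttheta_{i,t}|=O((\log|t|^{-1})^{-m/2})$ for every $j\ne i$, so there $|c_1\ttheta_{1,t}+\dots+c_M\ttheta_{M,t}|^{2/m}\approx|c_i|^{2/m}|\ttheta_{i,t}|^{2/m}$ and your integration is valid, yielding $|c_i|^{2/m}/(2l_{e_i})$; on $R_2$ pointwise convergence plus dominated convergence supplies the missing $\tpsi$-contribution, which then combines with the $U_1$-charts to give the full $\int_{\tilde\X_0}$. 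Your concluding paragraph flags the bookkeeping as ``the main obstacle,'' but the specific assertion you made is incorrect and the missing device is precisely this $R_1/R_2$ split. (A minor point: the case ``$P$ does not lie on any inessential chain'' is vacuous---every edge of $\Gamma_\X$ lies on exactly one maximal inessential chain, possibly of length $1$.)
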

\begin{proof}
  \newcommand{\cc}{c_1\ttheta_{1,t} + \dots+ c_M\ttheta_{M,t}}
  We use a partition of unity argument to reduce the problem to adapted coordinate charts. If $(U_1,w)$ is an coordinate chart adapted to an irreducible component of $\X_0$, then for $1 \leq j \leq s$,


\begin{equation}
  \label{eqnPf771Eqn0}
|\cc|^{2/m}(w) \to |c_{s+1}\tpsi_{s+1}+\dots+c_M\tpsi_M|^{2/m}(w)
\end{equation}
pointwise for a fixed $w$. Moreover, there exists an integrable function on $\D$ which dominates $|\cc|^{2/m}$ for all $t$ small enough. To see this, just note that there exists a constant $C$ such that
$$ |\cc|^{2/m}  \leq C \max_{j,k}|\ttheta_{j,t} \wedge \overline{\ttheta_{k,t}}|^{1/m}$$
and each of the $|\ttheta_{j,t} \wedge \overline{\ttheta_{k,t}}|^{1/m}$ are themselves bounded by an integrable function on $\D$. Thus, by the dominated convergence theorem, we get that
\begin{equation}
  \label{eqnPf771Eqn1}
\int_{U_1 \cap X_t} |\cc|^{2/m}(w) \to \int_{U_1 \cap E} |c_{s+1}\tpsi_{s+1}+\dots+c_M\tpsi_M|^{2/m}(w)
\end{equation}

Now consider a coordinate chart $(U_2,z,w)$ adapted to a node $P = E_1 \cap E_2$. Without loss of generality, suppose that $\psi_1$ develops a pole of order $m$ at $P$. To analyze the integral $\int_{|t|^{1/2} < |w| < 1}|\cc|^{2/m}$, we break up $\{ |t|^{1/2} < |w| < 1 \}$ into two regions: $$R_1 = \left\{ |t|^{1/2} < |w| < \frac{1}{(\log|t|^{-1})^m}\right\}$$ and
$$R_2 = \left\{ \frac{1}{(\log|t|^{-1})^m} < |w| < 1  \right\}.$$

On the region $R_1$, using Equations \eqref{eqnTheta1TR1} and \eqref{eqnThetaJTR1}, 
$$\left| \frac{\ttheta_{j,t}}{\ttheta_{1,t}} \right| \leq C (\log|t|^{-1})^{m/2}|w| = O\left(\frac{1}{(\log|t|^{-1})^{m/2}}\right),$$
for $2 \leq j \leq M$, where the second equality follows from the fact that $|w| < \frac{1}{\log(|t|^{-1})^{m}}$ in this region.

Thus, in this region,
\begin{align}
\label{eqnPf771Eqn4}
|\cc|^{2/m} \approx |c_1|^{2/m}|\ttheta_{1,t}|^{2/m}  \approx  \frac{|c_1|^{2/m}dw \wedge d\overline{w}}{2 \pi l_{e_1} |w|^2 \log|t|^{-1}},
\end{align}
where the second equality follows from Equation \eqref{eqnTheta1TR1}.

It is easy to verify that $$\int_{|t|^{1/2} < |w| < 1} \frac{dw \wedge d\overline{w}}{2 \pi l_{e_1} |w|^2 \log|t|^{-1}} \to \frac{1}{2l_{e_1}} .$$ Thus, we get that
\begin{equation}
 \label{eqnPf771Eqn5}
\int_{R_1 \cap X_t} |\cc|^{2/m} \to \frac{|c_1|^{2/m}}{2l_{e_1}}
\end{equation}
as $t \to 0$.


To analyze $\int_{R_2 \cap X_t} |\cc|^{2/m}$,
note that $|\cc|^{2/m}(w) \to |c_{s+1}\tpsi_{s+1}+ \dots + c_M\tpsi_M|^{2/m}(w)$ as $t \to 0$ for a fixed $w \in \D^*$ and thus,




\begin{equation}
\label{eqnPf771Eqn9}
\int_{R_2 \cap U} |\cc|^{2/m} \to \int_{E_1 \cap U}|c_{s+1}\tpsi_{s+1}+ \dots + c_M\tpsi_M|^{2/m}
\end{equation}

as $t \to 0$.


Combining Equations \eqref{eqnPf771Eqn1},  \eqref{eqnPf771Eqn5}, and \eqref{eqnPf771Eqn9}, we get that
\begin{multline*}
\int_{X_t} |\cc|^{2/m} \to \sum_{i=1}^s|c_i|^{2/m} + \int_{\tilde{\X_0}} |c_{s+1}\tpsi_{s+1} + \dots + c_M\tpsi_M |^{2/m} 
\end{multline*} 
as $t \to 0$ and the result follows.
\end{proof}

\subsection{Asymptotics of $\tau_t$}

\begin{cor}
  \label{corAsymptoticsTau}
  Let $(U_1,t,w)$ be a coordinate chart adapted to a an irreducible component $E \subset \X_0$. Then 
  $$ \tau_t \to \tilde{\tau}_0 $$
  as $t \to 0$. 
  Let $(U_2,z,w)$ be a coordinate chart adapted to a node $P = E_1 \cap E_2$. Then, in the region $\{ |t|^{1/2} < |w| < \frac{1}{(\log |t|^{-1})^m} \}$,
  $$ \tau_t \approx \frac{|dw \wedge d\overline{w}|}{2 \pi l_{e_P} |w|^2\log|t|^{-1}},$$
where $l$ is the length of the edge in $\tilde{\Gamma}$ containing $e_P$ and in the region $\{ \frac{1}{(\log|t|^{-1})^{m}} <  |w| < 1\}$,
  $$ \tau_t \to \tilde{\tau}_0 .$$
\end{cor}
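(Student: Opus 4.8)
The plan is to exploit the variational description
$$\tau_t(z) = \sup\bigl\{\, |\theta|^{2/m}(z) \ :\ \theta \in \mathrm{span}_\C(\theta_{1,t},\dots,\theta_{M,t}),\ \|\theta\|'_{X_t} = 1 \,\bigr\},$$
together with Lemma \ref{lemNormEstimate}. Writing $\theta = \sum_{i=1}^M c_i\tilde{\theta}_{i,t}$, I would first upgrade Lemma \ref{lemNormEstimate} from a statement that is pointwise in $c = (c_1,\dots,c_M)$ to one uniform for $c$ in compact sets: both $N_t(c) := \|\sum_i c_i\tilde{\theta}_{i,t}\|'_{X_t}$ and $N_0(c) := \Bigl(\sum_{i=1}^s|c_i|^{2/m} + \bigl(\|\sum_{i>s}c_i\tilde{\psi}_i\|'_{\tilde{\X_0}}\bigr)^{2/m}\Bigr)^{m/2}$ are continuous and satisfy $N_\bullet(\lambda c) = |\lambda|\,N_\bullet(c)$, so the pointwise convergence $N_t \to N_0$ (which is Lemma \ref{lemNormEstimate}, and whose proof in fact produces estimates uniform on compacta) forces $(1-\varepsilon)N_0(c) \le N_t(c) \le (1+\varepsilon)N_0(c)$ for all $c$ once $|t|$ is small. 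In particular, any $\theta$ with $\|\theta\|'_{X_t} = 1$ has $\sum_{i\le s}|c_i|^{2/m} + \bigl(\|\sum_{i>s}c_i\tilde{\psi}_i\|'_{\tilde{\X_0}}\bigr)^{2/m} \le (1-\varepsilon)^{-2/m}$, hence the coefficients $c_1,\dots,c_M$ are bounded uniformly in $t$ and in the choice of $\theta$.

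For the two ``convergent'' regimes --- a chart $(U_1,t,w)$ adapted to a component $E$, and a fixed $w \in \D^*$ in a chart $(U_2,z,w)$ adapted to a node (which lies in $R_2$ once $|t|$ is small) --- Equation \eqref{eqnThetaJTR2} gives $\theta_{i,t}(w) \to \psi_i(w)$, hence $\tilde{\theta}_{i,t}(w) \to 0$ for $1 \le i \le s$ (the normalization carries a factor $(\log|t|^{-1})^{m/2}$ in the denominator) and $\tilde{\theta}_{i,t}(w) \to \tilde{\psi}_i(w)$ for $s+1 \le i \le M$. Since the $c_i$ are bounded, every $\theta = \sum c_i\tilde{\theta}_{i,t}$ with $\|\theta\|'_{X_t}=1$ satisfies $|\theta|^{2/m}(w) = \bigl|\sum_{i>s}c_i\tilde{\psi}_i(w) + o(1)\bigr|^{2/m}$ with $\|\sum_{i>s}c_i\tilde{\psi}_i\|'_{\tilde{\X_0}} \le 1 + o(1)$, which gives $\limsup_{t\to 0}\tau_t(w) \le \tilde{\tau}_0(w)$; for the reverse inequality, I would lift an extremal section realizing $\tilde{\tau}_0(w)$ --- a unit-pseudonorm combination of $\tilde{\psi}_{s+1},\dots,\tilde{\psi}_M$ --- to the same combination of $\tilde{\theta}_{s+1,t},\dots,\tilde{\theta}_{M,t}$ and renormalize by its pseudonorm (which tends to $1$ by Lemma \ref{lemAsymptoticsPseudonorm}), obtaining $\liminf_{t\to 0}\tau_t(w) \ge \tilde{\tau}_0(w)$. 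This settles $\tau_t \to \tilde{\tau}_0$ on $U_1$ and in $R_2$.

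In the region $R_1 = \{|t|^{1/2} < |w| < (\log|t|^{-1})^{-m}\}$ around a node $P$, let $i_0 \le s$ index the edge of $\tilde{\Gamma}$ whose maximal inessential chain contains $e_P$, so that $\psi_{i_0}$ is the unique basis element with a pole of order $m$ at $P$. Equations \eqref{eqnTheta1TR1}--\eqref{eqnThetaJTR1} and the size comparisons from the proof of Lemma \ref{lemNormEstimate} give $\tilde{\theta}_{i_0,t} \approx \dfrac{w^{-m}\,dw^{\otimes m}}{(2\pi l_{e_{i_0}}\log|t|^{-1})^{m/2}}$, while $|\tilde{\theta}_{i,t}/\tilde{\theta}_{i_0,t}| = O\bigl((\log|t|^{-1})^{-m/2}\bigr)$ uniformly on $R_1$ for $i \neq i_0$. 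Hence, for $\theta = \sum c_i\tilde{\theta}_{i,t}$ with $\|\theta\|'_{X_t}=1$ one gets $|\theta|^{2/m}(z) \le (|c_{i_0}|+o(1))^{2/m}\,|\tilde{\theta}_{i_0,t}|^{2/m}(z) \le (1+o(1))\,|\tilde{\theta}_{i_0,t}|^{2/m}(z)$, uniformly in $z \in R_1$, using $|c_{i_0}|^{2/m} \le 1 + o(1)$ from the first paragraph; taking the supremum over $\theta$ and checking that $\theta = \tilde{\theta}_{i_0,t}/\|\tilde{\theta}_{i_0,t}\|'_{X_t}$ is a near-optimal competitor yields $\tau_t \approx |\tilde{\theta}_{i_0,t}|^{2/m} \approx \dfrac{|dw\wedge d\overline{w}|}{2\pi l_{e_{i_0}}|w|^2\log|t|^{-1}}$ on $R_1$, with $l_{e_{i_0}} = l_{e_P}$ by definition.

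The main obstacle, and what genuinely distinguishes the case $m\ge 2$ from the Bergman case, is the concentration phenomenon: the constraint $\|\theta\|'_{X_t} = 1$ is asymptotically an $\ell^{2/m}$-type condition $\sum_{i\le s}|c_i|^{2/m} + (\cdots) \le 1$ with exponent $2/m \le 1$, not a Euclidean one, so an extremal section is forced to spend essentially its entire budget on the single basis vector that dominates at the point $z$ in question --- near a node, on $\tilde{\theta}_{i_0,t}$. Turning this heuristic into uniform inequalities is precisely where the uniform version of Lemma \ref{lemNormEstimate} and the uniform size comparisons among the $\tilde{\theta}_{i,t}$ on $R_1$ and $R_2$ are needed; once those are in place, the remaining steps are routine manipulations with the supremum defining $\tau_t$ and the fact, recorded in Section \ref{subsecPluriBergman}, that this supremum is attained.
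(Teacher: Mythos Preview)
Your proposal is correct and follows essentially the same approach as the paper: both use Lemma \ref{lemNormEstimate} to replace the constraint $\|\theta\|'_{X_t}=1$ by its limiting $\ell^{2/m}$-type form, then read off the supremum from the pointwise asymptotics \eqref{eqnPf771Eqn0} and \eqref{eqnPf771Eqn4} of $|c_1\tilde{\theta}_{1,t}+\dots+c_M\tilde{\theta}_{M,t}|^{2/m}$ in each region. Your write-up is in fact more careful than the paper's terse argument, since you explicitly upgrade Lemma \ref{lemNormEstimate} to uniformity in $c$ and separate the upper and lower bounds---the paper simply asserts $\tau_t \approx \max_{N_0(c)=1}|\sum c_i\tilde{\theta}_{i,t}|^{2/m}$ and proceeds.
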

\begin{proof}
  \newcommand{\cc}{c_1\ttheta_{1,t} + \dots+ c_M\ttheta_{M,t}}
  It follows from Lemma \ref{lemNormEstimate} that
 \begin{multline}
\tau_t \approx \max_{|c_1|^{2/m} + \dots + |c_s|^{2/m} + \|c_{s+1}\tpsi_{s+1} + \dots c_M\tpsi_M \|_{\tilde{\X_0}}^{2/m} = 1} |\cc|^{2/m}. 
\end{multline}
Consider the coordinate chart $(U_1,t,w)$ adapted to $E \subset \X_0$. It follows from Equation \eqref{eqnPf771Eqn0} that for a fixed $w \in \D^*$, 
$$ |\cc|^{2/m}(w) \to |c_{s+1}\tpsi_{s+1}+\dots + c_{M}\tpsi_M|^{2/m}(w).$$
Therefore to maximize, $|\cc|^{2/m}(w)$, we need to pick $c_1=\dots=c_{s}=0$ and we get
\begin{align*}
\tau_t(w) \approx \max_{\|c_{s+1}\tpsi_{s+1} + \dots c_M\tpsi_M \|_{\tilde{\X_0}}^{2/m} = 1} |c_{s+1}\tpsi_{s+1}+\dots+c_M\tpsi_M|^{2/m}(w) = \tilde{\tau}_0(w).
\end{align*}

Thus, $\tau_t(w) \to \tilde{\tau}_0(w)$ pointwise for $w \in \D^*$. 

Similarly, the other assertion follows from Equations \eqref{eqnPf771Eqn4}.
\end{proof}

\begin{cor}
\label{corTauTrivialBound}
  There exists a constant $C$ such that for $|t|$ small enough such that
  $$C^{-1} \max_{j,k} \left\{\frac{|\theta_{j,t}\wedge \overline{\theta_{k,t}}|^{1/m}}{(\log|t|^{-1})^{\eta_{j} + \eta_k}}\right\} \leq \tau_t \leq C \max_{j,k} \left\{\frac{|\theta_{j,t}\wedge \overline{\theta_{k,t}}|^{1/m}}{(\log|t|^{-1})^{\eta_{j} + \eta_k}}\right\}$$
  where $\eta_{j} = \frac{1}{2}$ if $1 \leq j \leq s$ and $\eta_{j} = 0$ if $s+1 \leq j \leq M$.
\end{cor}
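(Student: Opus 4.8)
The plan is to extract from Lemma \ref{lemNormEstimate} a uniform two-sided comparison between $\tau_t$ and the ``obvious'' candidate volume form built out of the extended sections $\theta_{j,t}$ rescaled by the powers $(\log|t|^{-1})^{-\eta_j}$. First I would recall that, by definition, $\tau_t(z) = \sup\{|\theta|^{2/m}(z) \mid \|\theta\|'_{X_t} = 1\}$, where the supremum is over $\theta \in H^0(X_t,\omega_{X_t}^{\otimes m}(B|_{X_t}))$; writing $\theta = \sum_i c_i \ttheta_{i,t}$ and recalling $\ttheta_{i,t} = \theta_{i,t}/(2\pi l_{e_i}\log|t|^{-1})^{m/2}$ for $1 \le i \le s$ and $\ttheta_{i,t} = \theta_{i,t}/\|\psi_i\|'_{\tilde{\X_0}}$ for $s+1 \le i \le M$, I rephrase this as $\tau_t = \sup\{ |\sum_i c_i\ttheta_{i,t}|^{2/m} \mid \|\sum_i c_i\ttheta_{i,t}\|'_{X_t} = 1\}$. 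The key input is that Lemma \ref{lemNormEstimate} gives, for $|t|$ small, a uniform (in the $c_i$) comparison $\|\sum_i c_i \ttheta_{i,t}\|'_{X_t} \asymp \bigl(\sum_{i=1}^s |c_i|^{2/m} + (\|\sum_{i>s} c_i\tpsi_i\|'_{\tilde{\X_0}})^{2/m}\bigr)^{m/2}$, and the right-hand side is itself $\asymp (\sum_{i=1}^M |c_i|^{2/m})^{m/2}$ since $\tpsi_{s+1},\dots,\tpsi_M$ are a fixed (finite) basis so all norms on their span are comparable. Hence $\|\theta\|'_{X_t} \asymp (\sum_i |c_i|^{2/m})^{m/2}$ with constants independent of $t$ (for $|t|$ small) and of $(c_i)$.

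Next I would compare the two suprema: constraining $\|\theta\|'_{X_t} = 1$ versus constraining $\sum_i |c_i|^{2/m} = 1$. Because of the uniform equivalence of these two ``$\|\cdot\|'$-type'' quantities, the supremum of $|\sum c_i \ttheta_{i,t}|^{2/m}(z)$ over the first constraint set and over the second differ only by a bounded factor (rescaling each competitor $\theta$ by a scalar $\lambda$ with $|\lambda| \asymp 1$ moves between the two constraint sets while scaling $|\theta|^{2/m}$ by $|\lambda|^{2/m} \asymp 1$). So, up to a constant, $\tau_t(z) \asymp \sup\{ |\sum_i c_i \ttheta_{i,t}|^{2/m}(z) \mid \sum_i |c_i|^{2/m} = 1\}$. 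Now on the simplex $\{\sum |c_i|^{2/m}=1\}$ one has the elementary pointwise bound $|\sum_i c_i \ttheta_{i,t}|^{2/m}(z) \le C' \max_{j,k} |\ttheta_{j,t} \wedge \overline{\ttheta_{k,t}}|^{1/m}(z)$ (triangle inequality for the $(1,1)$-form, together with $|c_j|, |c_k| \le 1$ on the simplex and the fact that $|\theta|^{2/m}$ is, up to a universal constant in $m$, sub-additive over a sum of $M$ terms), and conversely each single $|\ttheta_{j,t}|^{2/m}(z)$ (take $c_j = 1$, other $c_i=0$) is a competitor, giving the matching lower bound $\tau_t(z) \ge C'' \max_j |\ttheta_{j,t}|^{2/m}(z)$, and then $\max_j |\ttheta_{j,t}|^{2/m} \asymp \max_{j,k}|\ttheta_{j,t} \wedge \overline{\ttheta_{k,t}}|^{1/m}$ since the latter is dominated by $\max_j |\ttheta_{j,t}|^{2/m}$ by Cauchy--Schwarz and dominates it on the diagonal $j=k$.

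Finally I would unwind the rescalings: $|\ttheta_{j,t} \wedge \overline{\ttheta_{k,t}}|^{1/m} = \dfrac{|\theta_{j,t} \wedge \overline{\theta_{k,t}}|^{1/m}}{(2\pi l_{e_j}\log|t|^{-1})^{1/2}(2\pi l_{e_k}\log|t|^{-1})^{1/2}}$ when $j,k \le s$, and similarly with one or both denominators replaced by the constant $\|\psi_\bullet\|'_{\tilde{\X_0}}$ when the index exceeds $s$; absorbing the harmless constants $2\pi l_{e_j}$, $\|\psi_j\|'_{\tilde{\X_0}}$ into $C$, this is exactly $\dfrac{|\theta_{j,t}\wedge \overline{\theta_{k,t}}|^{1/m}}{(\log|t|^{-1})^{\eta_j+\eta_k}}$ with $\eta_j = \tfrac12$ for $j \le s$ and $\eta_j = 0$ for $j > s$, as claimed. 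The only mild subtlety — and the part I would be most careful about — is making sure all comparison constants are genuinely uniform in $t$ (not just for each fixed $t$): this is fine because Lemma \ref{lemNormEstimate}'s ``$\approx$'' is an asymptotic equivalence valid uniformly in the $c_i$ for $|t|$ small, and the remaining steps involve only the fixed finite-dimensional linear algebra of $M$ vectors and universal inequalities for $(1,1)$-forms, with no further $t$-dependence.
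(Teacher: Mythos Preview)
Your proposal is correct and follows essentially the same approach as the paper: reduce to the rescaled sections $\ttheta_{j,t}$, use Lemma \ref{lemNormEstimate} to replace the constraint $\|\theta\|'_{X_t}=1$ by (something uniformly comparable to) $\sum_i |c_i|^{2/m}=1$, then get the upper bound from boundedness of the $|c_i|$ on this set and the lower bound by testing against single basis vectors and invoking the geometric-mean/Cauchy--Schwarz inequality $|\ttheta_{j,t}\wedge\overline{\ttheta_{k,t}}|^{1/m} \le \max_j |\ttheta_{j,t}|^{2/m}$. Your explicit remark about the uniformity in $(c_i)$ of the $\approx$ in Lemma \ref{lemNormEstimate} is a welcome clarification that the paper leaves implicit.
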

\begin{proof}
  It is enough to show that there a constant $C$ such that for $|t|$ small enough 
  $$C^{-1} \max_{j,k} \left\{|\ttheta_{j,t}\wedge \overline{\ttheta_{k,t}}|^{1/m}\right\} \leq \tau_t \leq C \max_{j,k} \left\{|\ttheta_{j,t}\wedge \overline{\ttheta_{k,t}}|^{1/m}\right\}$$
  \newcommand{\cc}{c_1\ttheta_{1,t} + \dots+ c_M\ttheta_{M,t}}
  It follows from Lemma \ref{lemNormEstimate} that
 \begin{multline}
\tau_t \approx \max_{|c_1|^{2/m} + \dots + |c_s|^{2/m} + \|c_{s+1}\tpsi_{s+1} + \dots c_M\tpsi_M \|_{\tilde{\X_0}}^{2/m} = 1} |\cc|^{2/m}. 
\end{multline}

Thus, $$\tau_t < 2 \max_{|c_1|^{2/m} + \dots + |c_s|^{2/m} + \|c_{s+1}\tpsi_{s+1} + \dots c_M\tpsi_M \|_{\tilde{\X_0}}^{2/m} = 1} |\cc|^{2/m}$$ for $t$ small enough. 
The constraint $$|c_1|^{2/m} + \dots + |c_s|^{2/m} + \|c_{s+1}\tpsi_{s+1} + \dots c_M\tpsi_M \|_{\tilde{\X_0}}^{2/m} = 1 $$ ensures that there exist a constant $C_1$ such that $|c_1|,\dots,|c_M|  \leq C_1$. Thus, we get a constant $C_2$ such that
$$ \max_{|c_1|^{2/m} + \dots + |c_s|^{2/m} + \|c_{s+1}\tpsi_{s+1} + \dots c_M\tpsi_M \|_{\tilde{\X_0}}^{2/m} = 1} |\cc|^{2/m} \leq C_2 \max_{j,k}|\ttheta_{j,t} \wedge \overline{\ttheta}_{k,t}|^{1/m},$$
which gives us one of the inequalities.

For the other inequality, note that we also have
$$\tau_t > \frac{1}{2} \max_{|c_1|^{2/m} + \dots + |c_s|^{2/m} + \|c_{s+1}\tpsi_{s+1} + \dots c_M\tpsi_M \|_{\tilde{\X_0}}^{2/m} = 1} |\cc|^{2/m}$$ for $t$ small enough.
Setting $c_j = 1$ and the rest 0, we get that $\tau_t \geq \frac{1}{2}|\ttheta_{j,t}|^{2/m}$ for all $1 \leq j,k \leq M$. Since $|\ttheta_{j,t} \wedge \overline{\ttheta}_{k,t}|^{1/m}$ is the geometric mean of $|\ttheta_{j,t}|^{2/m}$ and $|\ttheta_{k,t}|^{2/m}$, we also get that
$$ \tau_{t} \geq \frac{1}{2} |\ttheta_{j,t} \wedge \overline{\ttheta}_{k,t}|^{1/m}$$ for all $1 \leq j,k \leq M$ which gives us the second inequality. 
\end{proof}

The following three corollaries along with Lemma \ref{lemConvergenceMCCHC} prove Theorem \ref{thmA}.
\begin{cor}
\label{corA1}
Let $(U_1,t,w)$ be a coordinate chart adapted to an irreducible component $E$ of $\X_0$. Let $f$ be a compactly supported continuous function on $U_1$. Then, $\int_{X_t \cap U_1}\chi \tau_t \to \int_{E \cap U_1} \chi \tilde{\tau_0}$.
\end{cor}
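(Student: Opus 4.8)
The plan is a dominated‑convergence argument, feeding the pointwise asymptotics $\tau_t \to \tilde{\tau}_0$ from Corollary \ref{corAsymptoticsTau} together with the uniform majorant coming from Corollary \ref{corTauTrivialBound}. By a partition of unity subordinate to a finite cover of $\supp f$, I first reduce to the case where $f$ is supported in a small polydisc $U \subset U_1$. Since $\B$ is a smooth horizontal divisor meeting $E$ transversally in the single point $U_1 \cap E \cap \B$ (when this intersection is nonempty), I may choose the coordinate $w$ on $U$ so that $\B \cap U = \{w = 0\}$ while still $E \cap U = \{t = 0\}$ and the map to $\D$ is $(t,w) \mapsto t$ (recall $\X_0$ is reduced, so $E$ has multiplicity $1$). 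Then both $X_t \cap U$ and $E \cap U$ are identified with a fixed disc in the variable $w$, and $\int_{X_t \cap U} f\,\tau_t = \int f(t,w)\,\tau_t(w)$ with $\tau_t(w)$ now a $(1,1)$-form in $w$ alone.

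Next I would record the two inputs on $U$. For the pointwise limit: Corollary \ref{corAsymptoticsTau} gives $\tau_t(w) \to \tilde{\tau}_0(w)$ for every fixed $w \neq 0$, and since $f$ is continuous this upgrades to $f(t,w)\tau_t(w) \to f(0,w)\tilde{\tau}_0(w)$ pointwise off the null set $\{w = 0\}$. For the majorant: the power series expansions of the $\theta_j$ on $U$ show that in the chosen coordinate each $\theta_{j,t}$ equals $H_j(t,w)\,w^{-k_j}\,dw^{\otimes m}$ with $H_j$ holomorphic — hence bounded on $U$ uniformly in $t$ — and $0 \le k_j < m$ (there are no nodes of $\X_0$ in $U_1$, so the only possible pole of each $\psi_j|_E$ is of order $< m$, along $\B$). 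Therefore $|\theta_{j,t} \wedge \overline{\theta_{k,t}}|^{1/m} \le C\,|w|^{-(k_j + k_k)/m}\,(\tfrac{i}{2}dw \wedge d\bar w)$ with $C$ independent of $t$; plugging this into Corollary \ref{corTauTrivialBound} and using that $(\log|t|^{-1})^{\eta_j + \eta_k} \ge 1$ for $|t|$ small, we get $\tau_t \le C'\,|w|^{-2K/m}\,(\tfrac{i}{2}dw \wedge d\bar w)$ for all small $t$, where $K = \max_j k_j < m$. As $2K/m < 2$, this is a fixed integrable function on $U$, and $|f(t,w)\tau_t(w)| \le \|f\|_\infty\,C'\,|w|^{-2K/m}$ is an integrable majorant. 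The dominated convergence theorem then yields $\int_{X_t \cap U} f\,\tau_t \to \int f(0,w)\,\tilde{\tau}_0(w) = \int_{E \cap U} f\,\tilde{\tau}_0$, and summing over the partition of unity gives the corollary.

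The only delicate point is the behaviour near $\B \cap X_t$: if one does not absorb $\B$ into the coordinate hyperplane $\{w = 0\}$, the pole locus of $\theta_{j,t}$ is $\{w = g(t)\}$ with $g(0) = 0$, so the singularity drifts to the origin and no single majorant in $w$ alone suffices. In that formulation one argues either with the generalized (Pratt) dominated convergence theorem, using the $t$-dependent majorants $C\,|w - g(t)|^{-2K/m}$, which converge in $L^1$ to $C\,|w|^{-2K/m}$ by $L^1$-continuity of translation, or by a three-epsilon splitting of $U$ into a small disc $\{|w| < \rho\}$ — where the uniform tail bound $\int_{|w| < \rho} \tau_t \le C\rho^{\,2 - 2K/m}$ follows from the majorant — and its complement, where $\tau_t$ is uniformly bounded and plain dominated convergence applies. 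Choosing the adapted coordinate to kill $\B$, as above, avoids this altogether; this corollary is in any case the easiest of the three local statements entering Theorem \ref{thmA}, the real work being in the node charts handled in the subsequent corollaries.
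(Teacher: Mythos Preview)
Your proposal is correct and follows the same dominated-convergence approach as the paper's one-line proof, which simply invokes the pointwise limit $\tau_t \to \tilde\tau_0$ from Corollary~\ref{corAsymptoticsTau} together with the integrable majorant already set up earlier in the section. Your treatment is in fact more careful; the only caveat is that when several components $\overline{B_i}$ collide at the same smooth point of $\X_0$ (which can occur after the contractions defining the minimal snc model of $(X,\tfrac{1}{m}B)$, cf.\ Figure~\ref{figMinSNCModel}) you cannot absorb all of $\B$ into $\{w=0\}$ by a single coordinate change, so in that situation the Pratt/three-epsilon alternative you outline is genuinely needed rather than optional.
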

\begin{proof}
Using Corollary   \ref{corAsymptoticsTau}, we have that $\tau_t \to \tilde{\tau}_0$ in $U_1$. Thus, we get that
$$ \int_{X_t \cap U_1} f \tau_t \to \int_{E \cap U_1} f \tilde{\tau}_0 .$$
\end{proof}

\begin{cor}
 \label{corA2}
  Let $(U_2,z,w)$ be a coordinate chart adapted to a node $P = E_1 \cap E_2$ of $\X_0$. Suppose that $e_1$ is the edge in $\tilde{\Gamma}$ that contains $e_P$. Let $f$ be a continuous function on $[0,1]$ and let $0 < \alpha < \beta < \frac{1}{2}$. Then,
  $$ \int_{|t|^\beta < |w| < |t|^\alpha} f\left(\frac{\log|w|}{\log|t|}\right) \tau_t\to \frac{1}{l_{e_1}} \int_{\alpha}^\beta f(u)du$$
as $t \to 0$.
\end{cor}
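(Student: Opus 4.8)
The plan is to reduce the integral to an explicit one-dimensional one using the asymptotic description of $\tau_t$ near the node $P$ supplied by Corollary \ref{corAsymptoticsTau}, and then to evaluate that by a change of variables.

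First I would check that, once $|t|$ is small enough, the annulus $A_t = \{|t|^\beta < |w| < |t|^\alpha\}$ is contained both in $U_2 \cap X_t$ and in the region $R_1 = \{|t|^{1/2} < |w| < (\log|t|^{-1})^{-m}\}$ appearing in Corollary \ref{corAsymptoticsTau}. Indeed $\beta < \tfrac12$ gives $|t|^\beta > |t|^{1/2}$ (hence also $|t|^\beta > |t|$, so we stay inside $U_2 \cap X_t$), while $|t|^\alpha = e^{-\alpha\log|t|^{-1}}$ decays exponentially in $\log|t|^{-1}$ whereas $(\log|t|^{-1})^{-m}$ decays only polynomially, so $|t|^\alpha < (\log|t|^{-1})^{-m}$ for $|t|$ small. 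On $R_1$, Corollary \ref{corAsymptoticsTau} gives
$$\tau_t = (1 + \varepsilon_t)\,\frac{|dw\wedge d\overline{w}|}{2\pi\, l_{e_1}\,|w|^2\,\log|t|^{-1}},$$
where $\sup_{R_1}|\varepsilon_t| \to 0$ as $t \to 0$; this uniformity (as opposed to mere pointwise control in $w$) is exactly what the estimate $|\ttheta_{j,t}/\ttheta_{1,t}| = O((\log|t|^{-1})^{-m/2})$ on $R_1$, used in the proofs of Lemma \ref{lemNormEstimate} and Corollary \ref{corAsymptoticsTau}, provides. Since $f$ is continuous, hence bounded, on $[0,1]$, the contribution of the $\varepsilon_t$-term to $\int_{A_t} f(\tfrac{\log|w|}{\log|t|})\,\tau_t$ is bounded by $\|f\|_\infty \cdot \sup_{R_1}|\varepsilon_t|$ times the total mass of $\frac{|dw\wedge d\overline{w}|}{2\pi l_{e_1}|w|^2\log|t|^{-1}}$ on $A_t$, which is seen below to equal $\frac{\beta-\alpha}{l_{e_1}}$; hence this contribution tends to $0$, and it remains to compute the limit of $\int_{A_t} f(\tfrac{\log|w|}{\log|t|})\,\frac{|dw\wedge d\overline{w}|}{2\pi l_{e_1}|w|^2\log|t|^{-1}}$.

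For this I would pass to polar coordinates $w = r e^{i\vartheta}$, so that $|dw\wedge d\overline{w}| = r\,dr\,d\vartheta$ and the integrand is independent of $\vartheta$; integrating out $\vartheta$ contributes a factor $2\pi$ and leaves
$$\frac{1}{l_{e_1}\log|t|^{-1}}\int_{|t|^\beta}^{|t|^\alpha} f\!\left(\frac{\log r}{\log|t|}\right)\frac{dr}{r}.$$
Substituting $u = \frac{\log r}{\log|t|}$ turns $\frac{dr}{r}$ into $\log|t|\,du = -\log|t|^{-1}\,du$ and sends the limits $r = |t|^\beta,\ |t|^\alpha$ to $u = \beta,\ \alpha$, so the displayed quantity equals $\frac{1}{l_{e_1}}\int_\alpha^\beta f(u)\,du$ identically in $t$ (in particular, taking $f \equiv 1$ confirms the mass computation used in the previous paragraph). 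Combining the two paragraphs yields $\int_{A_t} f(\tfrac{\log|w|}{\log|t|})\,\tau_t \to \frac{1}{l_{e_1}}\int_\alpha^\beta f(u)\,du$, which is precisely hypothesis (2) of Lemma \ref{lemConvergenceMCCHC} for $\nu_t = \tau_t$, $\nu_0 = \tau_0$ (with multiplicities $a = b = 1$, since $\X_0$ is reduced). I expect no genuine obstacle here: the analytic content has already been extracted in Corollary \ref{corAsymptoticsTau}, and what remains is the change of variables together with the uniform error bound, the only point needing attention being the uniformity of $\varepsilon_t$ over all of $R_1$.
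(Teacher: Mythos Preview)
Your proposal is correct and follows essentially the same approach as the paper: apply the asymptotic for $\tau_t$ from Corollary~\ref{corAsymptoticsTau} on the region $R_1$, then perform the change of variables $u=\tfrac{\log|w|}{\log|t|}$, $\vartheta=\arg(w)$. The paper's own proof is terser---it simply asserts $\tau_t \approx \frac{|dw\wedge d\overline{w}|}{2\pi l_{e_1}|w|^2\log|t|^{-1}}$ and changes variables---whereas you have usefully made explicit (i) that $A_t\subset R_1$ for small $|t|$ and (ii) that the uniformity of the error $\varepsilon_t$ over $R_1$ is what justifies discarding it; both points are implicit in the paper but worth spelling out.
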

\begin{proof}

  Since $e_1$ is the edge in $\tilde{\Gamma}$ containing $e_P$,  $\psi_1$ develops a pole of order $m$ along $P$. Then, using Corollary \ref{corAsymptoticsTau}, we have that $\tau_t \approx  \frac{|dw \wedge d\overline{w}|}{2\pi|w|^2 l_{e_1} \log|t|^{-1}}$. Thus, we are interested in computing the limit
  $$  \int_{|t|^\beta < |w| < |t|^\alpha} f\left(\frac{\log|w|}{\log|t|}\right) \frac{|dw \wedge d\overline{w}|}{2\pi|w|^2 l_{e_1} \log|t|^{-1}}.$$
  Using a change of variables $u = \frac{\log|w|}{\log|t|}$ and $\vartheta = \arg(w)$, we get that the above limit of the above expression as $t \to 0$ is
  $\frac{1}{l_{e_1}} \int_{\alpha}^\beta f(u)du  $.
\end{proof}

\begin{cor}
 \label{corA3}
  Let $(U_2,z,w)$ be a coordinate chart adapted to a node $P = E_1 \cap E_2$ of $\X_0$. Suppose that $e_1$ is the edge in $E(\tilde{\Gamma})$ that contains $e_P$. Let $f$ be a continuous function on $[0,1]$, let $0 < \epsilon \ll \frac{1}{2}$, let $f$ be a compactly supported function on the half-dumbbell $D = \{(w,u) \in \D \times [0,\epsilon) \mid \text{ either } w = 0 \text{ or }  u = 0 \}$ and let $r : \D \times [0,\epsilon) \to D$ be a strong deformation retract. Then,
  $$ \int_{|t|^\epsilon < |w| < 1} f\left(r\left(w,\frac{\log|w|}{\log|t|}\right)\right) \tau_t \to \frac{1}{l_{e_1}}  \int_{0}^\epsilon f(u)du + \int_{E_1 \cap U} f(w)  \tilde{\tau}_0$$
as $t \to 0$.
\end{cor}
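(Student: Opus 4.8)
The plan is to split the annulus $\{\,|t|^\epsilon < |w| < 1\,\}$ along the circle $|w| = (\log|t|^{-1})^{-m}$, which is precisely the threshold appearing in Corollary \ref{corAsymptoticsTau}: on the \emph{edge region} $R_{\mathrm{edge}} := \{\,|t|^\epsilon < |w| \le (\log|t|^{-1})^{-m}\,\}$ Corollary \ref{corAsymptoticsTau} describes $\tau_t$ by the explicit density $\frac{|dw\wedge d\overline{w}|}{2\pi l_{e_1}|w|^2\log|t|^{-1}}$ (this region lies inside $\{|t|^{1/2} < |w| < (\log|t|^{-1})^{-m}\}$ because $\epsilon < \tfrac12$), while on the \emph{disk region} $R_{\mathrm{disk}} := \{\,(\log|t|^{-1})^{-m} < |w| < 1\,\}$ it gives $\tau_t \to \tilde{\tau}_0$. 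First I would extend the retract $r$ continuously to the compact set $\overline{\D}\times[0,\epsilon]$, where it is then uniformly continuous, with $r(w,0)=(w,0)$ and $r(0,u)=(0,u)$ on $D$. Writing $u(w):=\frac{\log|w|}{\log|t|}$ and $u_{\min}(t):=\frac{m\log\log|t|^{-1}}{\log|t|^{-1}}$ for the value of $u$ on the separating circle (so $|t|^{u_{\min}(t)}=(\log|t|^{-1})^{-m}$ and $u_{\min}(t)\to0$), one has $u(w)\in[u_{\min}(t),\epsilon]$ on $R_{\mathrm{edge}}$ and $u(w)\in[0,u_{\min}(t)]$ on $R_{\mathrm{disk}}$. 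Throughout, $f(0,u)$ denotes the value of $f$ on the edge part of $D$ (written $f(u)$ in the statement) and $f(w,0)$ its value on the disk part, identified with $E_1\cap U_2$ (written $f(w)$).

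On $R_{\mathrm{edge}}$: since $|w|\le(\log|t|^{-1})^{-m}\to0$ uniformly there, uniform continuity of $r$ gives $\sup_{R_{\mathrm{edge}}}\bigl|f(r(w,u(w)))-f(0,u(w))\bigr|\to0$; as the explicit density makes $\tau_t(R_{\mathrm{edge}})\to\tfrac{\epsilon}{l_{e_1}}$, in particular bounded, we may replace $f(r(w,u(w)))$ by $f(0,u(w))$ with vanishing error. The change of variables $u=u(w)$, $\vartheta=\arg w$ from the proof of Corollary \ref{corA2} (with $\alpha=u_{\min}(t)\to0$, $\beta=\epsilon$) then yields $\int_{R_{\mathrm{edge}}}f(r(w,u(w)))\,\tau_t\to\tfrac{1}{l_{e_1}}\int_0^\epsilon f(0,u)\,du=\tfrac{1}{l_{e_1}}\int_0^\epsilon f(u)\,du$.

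On $R_{\mathrm{disk}}$: since $u(w)\le u_{\min}(t)\to0$ uniformly there, uniform continuity of $r$ gives $\sup_{R_{\mathrm{disk}}}\bigl|f(r(w,u(w)))-f(w,0)\bigr|\to0$; as $\limsup_t\tau_t(X_t)<\infty$ (which follows from Corollary \ref{corTauTrivialBound} together with Lemma \ref{lemAsymptoticsPseudonorm}, and is needed to invoke Lemma \ref{lemConvergenceMCCHC} anyway), we may replace $f(r(w,u(w)))$ by $f(w,0)$ with vanishing error. Now $\tau_t\to\tilde{\tau}_0$ pointwise on $R_{\mathrm{disk}}$, the sets $R_{\mathrm{disk}}$ exhaust $(E_1\cap U_2)\setminus\{P\}$, and $\tilde{\tau}_0|_{E_1}$ is an integrable density there (pole order $\tfrac{2(m-1)}{m}<2$ at $P$) placing no mass at $P$. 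Moreover, by Corollary \ref{corTauTrivialBound} and the power series \eqref{eqnPluriThetaJTPowerSeries} (which on $R_{\mathrm{disk}}$ gives $|\theta_{j,t}|\lesssim|w|^{-m_j}$, where $m_j\le m$ is the pole order of $\psi_j$ at $P$ and $m_j=m$ for exactly the one $j$ corresponding to $e_1$), the mass $\tau_t(\{(\log|t|^{-1})^{-m}<|w|<\delta\})$ is bounded, uniformly for small $t$, by a quantity tending to $0$ as $\delta\to0$: the $|w|^{-2}$ contribution from the $m_j=m$ term carries the extra factor $(\log|t|^{-1})^{-1}$ from Corollary \ref{corTauTrivialBound} and hence has mass $O\!\bigl(\tfrac{\log\log|t|^{-1}}{\log|t|^{-1}}\bigr)\to0$, while the remaining terms are $\lesssim|w|^{-2(m-1)/m}$, genuinely integrable near $P$. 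A routine three-term estimate then gives $\int_{R_{\mathrm{disk}}}f(r(w,u(w)))\,\tau_t\to\int_{E_1\cap U_2}f(w,0)\,\tilde{\tau}_0$. Adding the two contributions proves the corollary.

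The measure-theoretic content is already supplied by Corollaries \ref{corAsymptoticsTau} and \ref{corTauTrivialBound}; the only genuine work is the bookkeeping of the retract, namely showing that $f\circ r$ converges uniformly to the edge value $f(0,u)$ on $R_{\mathrm{edge}}$ and to the curve value $f(w,0)$ on $R_{\mathrm{disk}}$ — which is exactly what compactness of $\overline{\D}\times[0,\epsilon]$ and uniform continuity of $r$ provide — together with checking that the separating circle $|w|=(\log|t|^{-1})^{-m}$ sits correctly between the two regimes, so that the edge parameter $u$ sweeps out all of $[0,\epsilon]$ in the limit and nothing is double counted. I expect this last point to be the only place requiring real care.
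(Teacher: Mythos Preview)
Your proposal is correct and follows essentially the same approach as the paper: the same splitting at the threshold $|w|=(\log|t|^{-1})^{-m}$, the same change of variables on the edge region, and the same pointwise convergence $\tau_t\to\tilde\tau_0$ on the disk region. Your version is in fact more carefully justified than the paper's own proof, which asserts the pointwise limits $f(r(|t|^u e^{i\vartheta},u))\to f(u)$ and $f(r(w,\tfrac{\log|w|}{\log|t|}))\to f(w)$ and the resulting integral convergences without spelling out the uniform-continuity and mass-control arguments you supply.
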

\begin{proof}
  To analyze the limit of the integral, we break up the region $\{ |t|^\epsilon < |w|  < 1 \}$ into two parts:  $\{|t|^{\epsilon} < |w| < \frac{1}{(\log|t|^{-1})^m} \}$,  and $\{ \frac{1}{(\log|t|^{-1})^{m}} < |w| < 1 \}$.

In the region, $\{ |t|^{\epsilon} < |w| < \frac{1}{(\log|t|^{-1})^m} \}$, note that $\tau_t \approx \frac{|dw \wedge d\overline{w}|}{2\pi |w|^2 \log|t|^{-1}}$ and thus the contribution of the region $\{|t|^\epsilon < |w| < \frac{1}{(\log|t|^{-1})^m}\}$ to the integral is  
$$\int_{|t|^\epsilon < |w| < \frac{1}{(\log|t|^{-1})^m}} f\left(r\left(w,\frac{\log|w|}{\log|t|}\right)\right) \frac{|dw \wedge d\overline{w}|}{2\pi l_{e_1} |w|^2 \log|t|^{-1}}.$$

Using the change of variables $u = \frac{\log|w|}{\log|t|^{-1}}$ and $\vartheta = \arg(w)$, we get
$$ \frac{1}{2 l_{e_1}\pi}\int_{\frac{m\log(\log|t|^{-1})}{\log|t|^{-1}}}^{\epsilon}\int_0^{2\pi} f(r(|t|^{u}e^{i\vartheta},u)) d\vartheta du.$$

As $t \to 0$, $f(r(|t|^{u}e^{i\vartheta},u)) \to f(r(0,u)) = f(u)$ pointwise almost everywhere and thus we get
$$ \int_{|t|^\epsilon < |w| < \frac{1}{(\log|t|^{-1})^m}} f\left(r\left(w,\frac{\log|w|}{\log|t|}\right)\right) \tau_t \to \frac{1}{l_{e_1}}  \int_{0}^\epsilon f(u)du.$$


In the region $\{\frac{1}{(\log|t|^{-1})^{m}} <  |w| < 1 \}$, note that $\tau_t \to \tilde{\tau}_0$.  Thus, it is enough to evaluate the limit
$$ \int_{\frac{1}{(\log|t|^{-1})^{m}}< |w| < 1} f\left(r\left(w,\frac{\log|w|}{\log|t|}\right)\right) \tilde{\tau}_0.$$
As $t \to 0$, $f\left(r\left(w,\frac{\log|w|}{\log|t|^{-1}}\right)\right) \to f(r(w,0)) = f(w)$. Thus, we get
$$ \int_{\frac{1}{(\log|t|^{-1})^{m}}< |w| < 1} f\left(r\left(w,\frac{\log|w|}{\log|t|}\right)\right) \tau_t  \to \int_{E_1 \cap U} f \tilde{\tau}_0$$
as $t \to 0$.
\end{proof}

\section{Convergence of $\mu_t$}
\label{secConvergenceCanonical}
Let the notation be as in the previous section. We prove the following precise version of Theorem \ref{mainThmB}.
\begin{thm}
\label{thmB}
Let $\X$ be the minimal snc model of $(X,\frac{1}{m}B)$.
Let $\mu_0$ denote the measure on $\Delta_\CC(\X)$ which is given by
\begin{itemize}
\item On a Type I component $E_i$, the pluri-Bergman measure on $E_i$ with respect to $mK_{E_i} + (m-1)P^{(i)}_1 + \dots + (m-1)P^{(i)}_{r_i} + \B|_{E_i}$.
\item If $E_i$ is a Type II component, we pick the zero measure
\item On each edge $e \in E(\Gamma_\X)$, we pick the Lebesgue measure of length $\frac{1}{l_e}$, where $l_e$ is the length of the edge in $\tilde{\Gamma}$ containing $e$.
\end{itemize}  
On $\X^\hyb_\CC$, the measures $\mu_t \to \mu_0$ weakly. 
\end{thm}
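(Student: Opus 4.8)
The plan is to follow the architecture of the proof of Theorem \ref{thmA}, replacing the Narasimhan--Simha pseudonorm $\|\cdot\|'_{X_t}$ by the genuine Hermitian norm $\|\theta\|_{\mu,t}^2:=\left(\frac{i}{2}\right)^m\int_{X_t}\frac{\theta\wedge\overline{\theta}}{\tau_t^{m-1}}$ attached to the pairing \eqref{eqnGenBergmanPairing}, and to feed in the precise description of $\tau_t$ from Corollary \ref{corAsymptoticsTau} to control it. I keep the extended sections $\theta_{j,t}=\theta_j|_{X_t}$ (with $\theta_j|_{\X_0}=\psi_j$), their rescalings $\tilde{\theta}_{i,t}$, and the basis $\psi_1,\dots,\psi_M$ and $\tilde{\psi}_i$ from Section \ref{secConvergenceNS}. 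Throughout, the bound $\tau_t\gtrsim|\tilde{\theta}_{j,t}|^{2/m}$ extracted from Corollary \ref{corTauTrivialBound}, together with the mild $|z|^{-2a/m}$ (with $a<m$) behaviour of $\tau_t$ near $\B$, supplies the integrable dominating functions for the dominated convergence arguments. Since $\mu_t(X_t)=h^0(X_t,\omega_{X_t}^{\otimes m}(B|_{X_t}))=M$ is independent of $t$, the finiteness hypothesis of Lemma \ref{lemConvergenceMCCHC} is automatic, and it suffices to verify its three local conditions.

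The first and central step is to prove the analogue of Lemmas \ref{lemAsymptoticsPseudonorm}--\ref{lemNormEstimate}: as $t\to 0$ the Gram matrix $H_t=\big(\langle\tilde{\theta}_{j,t},\tilde{\theta}_{k,t}\rangle_{\mu,t}\big)_{j,k}$ converges to $I_s\oplus G$, where $I_s$ is the $s\times s$ identity and $G=\big(\langle\tilde{\psi}_j,\tilde{\psi}_k\rangle_{\mu,\tilde{\X_0}}\big)_{s<j,k\le M}$ is the positive definite Gram matrix of $\tilde{\psi}_{s+1},\dots,\tilde{\psi}_M$ for the Hermitian pairing on $\bigoplus_iH^0\big(E_i,mK_{E_i}+(m-1)P^{(i)}_1+\dots+(m-1)P^{(i)}_{r_i}+\B|_{E_i}\big)$ defined using $\tilde{\tau}_0$. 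A partition of unity reduces this to adapted charts. On a chart $(U_1,t,w)$ adapted to a component one uses $\theta_{j,t}\to\psi_j$ (Equation \eqref{eqnThetaJTR2}) and $\tau_t\to\tilde{\tau}_0$; division by $(2\pi l_{e_i}\log|t|^{-1})^{m/2}$ sends the contributions of the indices $j\le s$ to zero. On a chart $(U_2,z,w)$ adapted to a node $P$ lying on the maximal inessential chain of $e_\ell$ (so $\ell\le s$), one splits $\{|t|^{1/2}<|w|<1\}$ into the regions $R_1,R_2$ of Section \ref{secConvergenceNS}: on $R_2$ all quantities converge to their central-fibre limits and dominated convergence applies; on $R_1$, combining Equation \eqref{eqnTheta1TR1} with $\tau_t\approx\frac{|dw\wedge d\overline{w}|}{2\pi l_{e_\ell}|w|^2\log|t|^{-1}}$ gives $\frac{|\tilde{\theta}_{\ell,t}\wedge\overline{\tilde{\theta}_{\ell,t}}|}{\tau_t^{m-1}}\approx\frac{|dw\wedge d\overline{w}|}{2\pi l_{e_\ell}|w|^2\log|t|^{-1}}$, whose integral over the two halves near $P$ is $\frac{1}{l_{e_\ell}}$; summing over the $l_{e_\ell}$ nodes of the chain yields $\|\tilde{\theta}_{\ell,t}\|^2_{\mu,t}\to 1$. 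All off-diagonal and mixed entries are $o(1)$: at any node $P$ exactly one $\psi_\ell$ with $\ell\le s$ has an order-$m$ pole while every other $\psi_j$ has pole order $\le m-1$ there (by the choice of basis made in Section \ref{secConvergenceNS} via Lemma \ref{lemSESVSn}), so the exponent of $|w|$ strictly improves while the surviving powers of $\log|t|^{-1}$ are negative.

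Granting Step 1, set $f_{i,t}:=\sum_j(H_t^{-1/2})_{ij}\,\tilde{\theta}_{j,t}$; this is an orthonormal basis of $H^0(X_t,\omega_{X_t}^{\otimes m}(B|_{X_t}))$ for $\langle\cdot,\cdot\rangle_{\mu,t}$, and since $H\mapsto H^{-1/2}$ is continuous on positive definite Hermitian matrices, $H_t^{-1/2}\to I_s\oplus G^{-1/2}$; hence on every component chart $f_{i,t}\to 0$ for $i\le s$ and $f_{i,t}\to f_{i,0}:=\sum_{j>s}(G^{-1/2})_{ij}\tilde{\psi}_j$ for $i>s$, with $\{f_{i,0}\}_{i>s}$ orthonormal for $\langle\cdot,\cdot\rangle_{\mu,\tilde{\X_0}}$. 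As $\mu_t=\sum_i\frac{|f_{i,t}\wedge\overline{f_{i,t}}|}{\tau_t^{m-1}}$ is independent of the chosen basis, we compute with this one. On a chart adapted to a component $E$, dominated convergence gives $\mu_t\to\sum_{i>s}\frac{|f_{i,0}|_E\wedge\overline{f_{i,0}|_E}|}{\tilde{\tau}_0^{m-1}}$, which is the diagonal of the Bergman kernel of $\bigoplus_iH^0(E_i,\dots)$ restricted to $E$; since this pairing is the orthogonal direct sum of the pairings on the $H^0(E_i,\dots)$, that diagonal agrees with the diagonal Bergman kernel of $H^0(E,\dots)$, i.e. with $\mu_0|_E$ (the pluri-Bergman measure of $E$ if $E$ is Type I, and $0$ if $E$ is Type II), giving Lemma \ref{lemConvergenceMCCHC}(1). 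On a chart adapted to a node $P$ on the chain of $e_\ell$, in $\{|t|^\beta<|w|<|t|^\alpha\}$ the term $\tilde{\theta}_{\ell,t}$ dominates all others (the ratios are $O(|t|^\alpha)$ times at most a fixed power of $\log|t|^{-1}$), so $f_{i,t}\approx(H_t^{-1/2})_{i\ell}\tilde{\theta}_{\ell,t}$ and $\mu_t\approx(H_t^{-1})_{\ell\ell}\,\frac{|\tilde{\theta}_{\ell,t}\wedge\overline{\tilde{\theta}_{\ell,t}}|}{\tau_t^{m-1}}\approx\frac{|dw\wedge d\overline{w}|}{2\pi l_{e_\ell}|w|^2\log|t|^{-1}}$ because $(H_t^{-1})_{\ell\ell}\to 1$; thus $\mu_t$ has the same leading behaviour there as $\tau_t$, and the change-of-variables computation of Corollary \ref{corA2} applies verbatim, giving Lemma \ref{lemConvergenceMCCHC}(2) with the Lebesgue measure of length $\frac1{l_{e_\ell}}$ on the edge. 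Condition (3) follows by combining that estimate on $\{|t|^\epsilon<|w|<\frac1{(\log|t|^{-1})^m}\}$ (exactly as in Corollary \ref{corA3}) with the component-chart limit on $\{\frac1{(\log|t|^{-1})^m}<|w|<1\}$. By Lemma \ref{lemConvergenceMCCHC}, $\mu_t\to\mu_0$ on $\X^\hyb_\CC$.

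The principal difficulty is Step 1: the denominators $\tau_t^{m-1}$ blow up along the nodes at the rate $\frac1{|w|^2\log|t|^{-1}}$ and are mildly singular along $\B$, so each integral of $\frac{\theta_{j,t}\wedge\overline{\theta_{k,t}}}{\tau_t^{m-1}}$ must be estimated with care, and one must check that the normalizations $(2\pi l_{e_i}\log|t|^{-1})^{m/2}$ inherited from the Narasimhan--Simha side are exactly those that force $\|\tilde{\theta}_{i,t}\|_{\mu,t}\to 1$. Once the description of $\tau_t$ in Corollary \ref{corAsymptoticsTau} is in hand this is bookkeeping of the same nature as in Section \ref{secConvergenceNS}, but essentially all the work is concentrated there; the remaining steps are routine linear algebra and the machinery already set up in Lemma \ref{lemConvergenceMCCHC}.
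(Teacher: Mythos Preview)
Your proposal is correct and follows essentially the same route as the paper. Both arguments reduce to Lemma \ref{lemConvergenceMCCHC}, establish the asymptotics of the Gram matrix of the extended sections with respect to the pairing \eqref{eqnGenBergmanPairing} (your Step~1 is exactly the content of the paper's Lemma \ref{lemAsymptoticsA} and Corollary \ref{corAsymptoticsPluriAInverse}, after the diagonal rescaling by $(2\pi l_{e_i}\log|t|^{-1})^{m/2}$ and $\|\psi_i\|'_{\tilde{\X_0}}$), and then verify the three local conditions on adapted charts (your final paragraph corresponds to Corollaries \ref{corB1}--\ref{corB3}).

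The only cosmetic difference is in the linear algebra packaging: the paper works with the unrescaled Gram matrix $A(t)=(\langle\theta_{j,t},\theta_{k,t}\rangle)$ and the Bergman-kernel formula $\mu_t=\sum_{j,k}(\overline{A(t)})^{-1}_{j,k}\frac{\theta_{j,t}\wedge\overline{\theta_{k,t}}}{\tau_t^{m-1}}$, whereas you rescale first, show $H_t\to I_s\oplus G$, and pass through $H_t^{-1/2}$ to produce an explicit orthonormal basis $f_{i,t}$. Since $H_t$ is just $A(t)$ conjugated by the diagonal rescaling, and since $\sum_i f_{i,t}\wedge\overline{f_{i,t}}=\sum_{j,k}(H_t^{-1})_{jk}\,\tilde{\theta}_{j,t}\wedge\overline{\tilde{\theta}_{k,t}}$, the two formulations are equivalent. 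Your version has the minor advantage that the limit $I_s\oplus G$ is visibly positive definite (so continuity of $H\mapsto H^{-1/2}$ applies without further comment); the paper's version makes the precise error rates in $A(t)^{-1}$ explicit, which it then reuses. Note also that because the paper already chose $\psi_{s+1},\dots,\psi_M$ orthonormal for the Hermitian pairing on $\tilde{\X_0}$, your matrix $G$ is in fact diagonal, so the two computations line up entry by entry.
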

\begin{proof}
Theorem \ref{thmB} follows from Lemma \ref{lemConvergenceMCCHC} and Corollaries \ref{corB1} -- \ref{corB3}. 
\end{proof}

\subsection{Asymptotics of $\langle \theta_{j,t} , \overline{\theta_{k,t}} \rangle$}
\label{subsecAsymptotics2}

Recall from Section \ref{eqnGenBergmanPairing} the Hermitian pairing used to define the pluri-Bergman measure.
$$ \langle \theta, \vartheta \rangle = \left(\frac{i}{2}\right)^m \int \frac{\theta \wedge \overline{\vartheta}}{\tau^{m-1}}.$$
We first understand the asymptotics of this pairing.

Let $A(t)$ denote the $M \times M$ matrix with the $(j,k)$-th coefficient
$$ (A(t))_{j,k} = \langle \theta_{j,t}, \theta_{k,t} \rangle =  \left(\frac{i}{2}\right)^m \int_{X_t} \frac{\theta_{j,t} \wedge \overline{\theta_{k,t}}}{\tau_t^{m-1}} ,$$

Then using elementary linear algebra, we see that 
$$ \mu_t = \left(\frac{i}{2}\right)^m \sum_{j,k = 1}^M (\overline{A(t)})_{j,k}^{-1} \frac{\theta_{j,t} \wedge \overline{\theta_{k,t}}}{\tau_t^{m-1}} .$$

We first state a lemma that we will use to estimate the integral $\int_{X_t}\frac{\theta_{j,t} \wedge \overline{\theta_{k,t}}}{\tau_t^{m-1}}$.  
\begin{lem}
 \label{lemTrivialBoundIntegrand}
 There exists a constant $C$ such that 
 $$ \left| \frac{\theta_{j,t} \wedge \overline{\theta_{k,t}}}{\tau_t^{m-1}} \right| \leq C (\log|t|^{-1})^{(\eta_{j}+\eta_k)(m-1)} |\theta_{j,t} \wedge \overline{\theta_{k,t}}|^{1/m} $$
 for all $1 \leq j,k \leq M$, where $\eta_{j} = 0$ if $1 \leq j,k \leq s$ and  $\eta_{j,k} = 1$ if $s+1 \leq j,k \leq M$ and $\eta_{j,k} = \frac{1}{2}$ in all other cases.  
\end{lem}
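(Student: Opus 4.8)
The plan is to bound the integrand $\bigl|\frac{\theta_{j,t}\wedge\overline{\theta_{k,t}}}{\tau_t^{m-1}}\bigr|$ pointwise, using the lower bound on $\tau_t$ furnished by Corollary~\ref{corTauTrivialBound}. Recall that corollary gives a constant $C_0$ with
$$ \tau_t \geq C_0^{-1}\max_{p,q}\left\{\frac{|\theta_{p,t}\wedge\overline{\theta_{q,t}}|^{1/m}}{(\log|t|^{-1})^{\eta_p+\eta_q}}\right\} $$
for $|t|$ small, where $\eta_p=\tfrac12$ for $1\le p\le s$ and $\eta_p=0$ for $s+1\le p\le M$. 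In particular, taking $p=q=j$ and $p=q=k$ separately,
$$ \tau_t \geq C_0^{-1}\,\frac{|\theta_{j,t}|^{2/m}}{(\log|t|^{-1})^{2\eta_j}}, \qquad \tau_t \geq C_0^{-1}\,\frac{|\theta_{k,t}|^{2/m}}{(\log|t|^{-1})^{2\eta_k}}. $$
I would then split $m-1$ roughly evenly between these two estimates: write the denominator $\tau_t^{m-1}$ using $\tau_t^{a}$ bounded below via the first inequality and $\tau_t^{b}$ via the second, with $a+b=m-1$ chosen so that $\frac{2a}{m}$ and $\frac{2b}{m}$ balance against $|\theta_{j,t}|^{2/m}$ and $|\theta_{k,t}|^{2/m}$ respectively. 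Concretely, using $a=b=\frac{m-1}{2}$ (after passing to $|\theta_{j,t}\wedge\overline{\theta_{j,t}}|$ and $|\theta_{k,t}\wedge\overline{\theta_{k,t}}|$ to avoid fractional powers, or simply working with the real $(1,1)$-forms as sections of the relevant real line bundle) gives
$$ \frac{1}{\tau_t^{m-1}} \leq C_0^{m-1}\,\frac{(\log|t|^{-1})^{(m-1)(\eta_j+\eta_k)}}{|\theta_{j,t}|^{(m-1)/m}\,|\theta_{k,t}|^{(m-1)/m}}. $$

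Multiplying through by $|\theta_{j,t}\wedge\overline{\theta_{k,t}}|$ and using that, pointwise, $|\theta_{j,t}\wedge\overline{\theta_{k,t}}|$ is the geometric mean of $|\theta_{j,t}|^2$ and $|\theta_{k,t}|^2$ as volume forms — so $|\theta_{j,t}\wedge\overline{\theta_{k,t}}| = |\theta_{j,t}|\cdot|\theta_{k,t}|$ — one finds
$$ \left|\frac{\theta_{j,t}\wedge\overline{\theta_{k,t}}}{\tau_t^{m-1}}\right| \leq C_0^{m-1}\,(\log|t|^{-1})^{(m-1)(\eta_j+\eta_k)}\,|\theta_{j,t}|^{1/m}|\theta_{k,t}|^{1/m} = C\,(\log|t|^{-1})^{(\eta_j+\eta_k)(m-1)}\,|\theta_{j,t}\wedge\overline{\theta_{k,t}}|^{1/m}, $$
which is exactly the claimed inequality, with $C=C_0^{m-1}$ and $\eta_j+\eta_k$ taking the values $0$ (when $1\le j,k\le s$), $1$ (when $s+1\le j,k\le M$), and $\tfrac12$ otherwise, matching the statement. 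The cleanest way to make the "geometric mean" step rigorous is to work in a local coordinate $w$, writing $\theta_{j,t}=f_j\,dw^{\otimes m}$, $\theta_{k,t}=f_k\,dw^{\otimes m}$, $\tau_t=h\,(\tfrac{i}{2}dw\wedge d\bar w)$ with $h>0$, so the inequality becomes the elementary pointwise estimate $\frac{|f_j\overline{f_k}|}{h^{m-1}}\le C(\log|t|^{-1})^{(\eta_j+\eta_k)(m-1)}|f_j\overline{f_k}|^{1/m}$, i.e.\ $h^{m-1}\ge C^{-1}(\log|t|^{-1})^{-(\eta_j+\eta_k)(m-1)}|f_j\overline{f_k}|^{(m-1)/m}$, which follows by multiplying the two lower bounds $h\ge C_0^{-1}(\log|t|^{-1})^{-2\eta_j}|f_j|^{2/m}$ and $h\ge C_0^{-1}(\log|t|^{-1})^{-2\eta_k}|f_k|^{2/m}$ raised to the powers $\tfrac{m-1}{2}$ each.

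The only subtlety — and the step I would be most careful about — is the bookkeeping of the exponent $\eta_j+\eta_k$: Corollary~\ref{corTauTrivialBound} was stated with the convention $\eta_j=\tfrac12$ for $1\le j\le s$ and $0$ otherwise, whereas here the roles are reversed in the sense that the large $(\log|t|^{-1})$ factors in $\tau_t$ come from the \emph{first} $s$ sections; so when I invoke the lower bound $\tau_t\ge C_0^{-1}(\log|t|^{-1})^{-2\eta_j}|\theta_{j,t}|^{2/m}$ I must make sure the exponent $2\eta_j$ there is the one from Corollary~\ref{corTauTrivialBound}, and then the resulting exponent $(m-1)(\eta_j+\eta_k)$ on the right-hand side comes out with $\eta_j+\eta_k\in\{0,\tfrac12,1\}$ exactly as asserted. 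No compactness or integration is needed — this is a purely pointwise bound — so once the exponent arithmetic is verified, the lemma follows immediately. There is no genuine analytic obstacle; the lemma is a clean consequence of Corollary~\ref{corTauTrivialBound}.
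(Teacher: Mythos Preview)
Your argument is correct and is essentially the paper's: both feed the lower bound on $\tau_t$ from Corollary~\ref{corTauTrivialBound} into the denominator and read off the inequality. The paper is slightly more economical --- it uses the mixed bound $\tau_t \ge C_1\,|\theta_{j,t}\wedge\overline{\theta_{k,t}}|^{1/m}/(\log|t|^{-1})^{\eta_j+\eta_k}$ for the given pair $(j,k)$ and raises it to the $(m-1)$-th power in one stroke, bypassing your geometric-mean detour through the two diagonal bounds --- and your instinct about the $\eta$-bookkeeping is well founded: the printed statement has the cases for $\eta$ swapped relative to Corollary~\ref{corTauTrivialBound}, and it is the Corollary's convention (giving exponent $0$ for $s+1\le j,k\le M$, exponent $1$ for $1\le j,k\le s$) that both proofs actually produce and that is used downstream in Lemma~\ref{lemAsymptoticsA}.
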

\begin{proof}
From Corollary \ref{corTauTrivialBound}, we have that there exists a constant $C_1 > 0$ such that $\tau_t \geq C_1 \frac{|\theta_{j,t} \wedge \overline{\theta_{k,t}}|^{1/m}}{\log|t|^{\eta_{j} + \eta_k}}$. The result now follows immediately. 
\end{proof}

\begin{lem}
\label{lemAsymptoticsA}
  The matrix $A(t)$ has the following form. 
  $$A =
\begin{pmatrix}
  B & D^* \\
  D & F
\end{pmatrix}$$
  
\noindent where $B$  is an $s \times s$ matrix with entries $B_{j,j} \approx (2\pi l_{e_i} \log|t|^{-1})^{m}$ and $B_{j,k} = O((\log|t|^{-1})^{m-1})$ for $j \neq k$, $D = O((\log|t|^{-1})^{\frac{m-1}{2}})$ and $F$ is a $(M-s) \times (M-s)$ matrix such that $F \to \mathbf{I}_{M-s}$ as $t \to 0$.  
\end{lem}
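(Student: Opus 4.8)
\emph{Plan.} Since $A(t)$ is the Gram matrix of $\theta_{1,t},\dots,\theta_{M,t}$ for the pairing \eqref{eqnGenBergmanPairing}, and $\ttheta_{i,t}$ is $\theta_{i,t}$ divided by $(2\pi l_{e_i}\log|t|^{-1})^{m/2}$ for $i\le s$ and by $\|\psi_i\|'_{\tilde{\X_0}}$ for $i>s$, the entry $A(t)_{j,k}$ equals $\langle\ttheta_{j,t},\ttheta_{k,t}\rangle$ times a known factor: $(2\pi l_{e_j}\log|t|^{-1})^{m/2}(2\pi l_{e_k}\log|t|^{-1})^{m/2}$ in the block $B$, $(2\pi l_{e_j}\log|t|^{-1})^{m/2}\|\psi_k\|'_{\tilde{\X_0}}$ in $D$, and $\|\psi_j\|'_{\tilde{\X_0}}\|\psi_k\|'_{\tilde{\X_0}}$ in $F$. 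So the assertions about $B$, $D$, $F$ reduce to: (a) $\langle\ttheta_{j,t},\ttheta_{k,t}\rangle\to\delta_{j,k}$ for $s+1\le j,k\le M$; (b) $\langle\ttheta_{j,t},\ttheta_{j,t}\rangle\to1$ for $1\le j\le s$; and (c) bounds on the raw entries $A(t)_{j,k}$ off the diagonal. For (c) I will use only Lemma \ref{lemTrivialBoundIntegrand}, $|A(t)_{j,k}|\le\int_{X_t}|\theta_{j,t}\wedge\overline{\theta_{k,t}}|/\tau_t^{m-1}\le C(\log|t|^{-1})^{(\eta_j+\eta_k)(m-1)}\int_{X_t}|\theta_{j,t}\wedge\overline{\theta_{k,t}}|^{1/m}$, together with the following key estimate: \emph{if $(j,k)$ is not of the form $(j,j)$ with $j\le s$, then $\sup_{|t|\ll1}\int_{X_t}|\theta_{j,t}\wedge\overline{\theta_{k,t}}|^{1/m}<\infty$}; granting it, $|A(t)_{j,k}|=O((\log|t|^{-1})^{m-1})$ off the diagonal of $B$ (there $\eta_j=\eta_k=\tfrac12$) and $O((\log|t|^{-1})^{(m-1)/2})$ throughout $D$ (there $\{\eta_j,\eta_k\}=\{\tfrac12,0\}$), which is exactly what the lemma claims.

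I would prove the key estimate by a partition of unity into adapted charts. On a chart adapted to an irreducible component, $|\theta_{i,t}|^{2/m}\le C(|\psi_i|^{2/m}+|w|^{2(1-m)/m})$ uniformly in $t$ (the majorant from the proof of Lemma \ref{lemAsymptoticsPseudonorm}), so dominated convergence applies. On a chart adapted to a node $P$, equations \eqref{eqnTheta1TR1}--\eqref{eqnThetaJTR1} and the bound $|t^\alpha w^{\beta-\alpha-m}|\le|w|^{1-m}$ valid on $\{|t|^{1/2}<|w|<1\}$ give $|\theta_{i,t}|\le C|w|^{-\max(a_i,m-1)}$ there, where $a_i\le m$ is the order of the pole of $\psi_i$ at $P$. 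By the choice of basis, the $\psi_i$ with $i\le s$ carry their order-$m$ poles on pairwise disjoint maximal inessential chains, while the $\psi_i$ with $i>s$ have poles of order $<m$ everywhere; hence at most one of $a_j,a_k$ equals $m$ unless $j=k\le s$, and $|\theta_{j,t}\wedge\overline{\theta_{k,t}}|^{1/m}\le C|w|^{-p}\tfrac i2 dw\wedge d\overline w$ with $p=\big(\max(a_j,m-1)+\max(a_k,m-1)\big)/m\le(2m-1)/m<2$, uniformly integrable near $P$. Summing the finitely many charts gives the bound. (This is the step I expect to be the main obstacle: the crude bound of Lemma \ref{lemTrivialBoundIntegrand} alone, applied with $\int|\theta_{j,t}|^{2/m}\approx 2\pi l_{e_j}\log|t|^{-1}$, only yields $A(t)_{j,k}=O((\log|t|^{-1})^m)$ off the diagonal of $B$, too weak to separate that block from the diagonal; the pole‑disjointness observation is what rescues it.)

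For $F\to\mathbf{I}_{M-s}$ I localize $A(t)_{j,k}=(\tfrac i2)^m\int_{X_t}\theta_{j,t}\wedge\overline{\theta_{k,t}}/\tau_t^{m-1}$ ($j,k>s$) by a partition of unity. Over a Type I component $E$ the integrand converges pointwise to $\psi_j\wedge\overline{\psi_k}/(\tilde\tau_0|_E)^{m-1}$ (using $\theta_{i,t}\to\psi_i$ and $\tau_t\to\tilde\tau_0|_E$, which is nowhere zero) and is dominated by $C|\theta_{j,t}\wedge\overline{\theta_{k,t}}|^{1/m}$ with the fixed majorant above; over a Type II component $\psi_j|_E=\psi_k|_E=0$, so $\int|\theta_{j,t}\wedge\overline{\theta_{k,t}}|^{1/m}\to\int|\psi_j\wedge\overline{\psi_k}|^{1/m}=0$ forces that contribution to vanish; and on a neck $\{|t|^{1/2}<|w|<(\log|t|^{-1})^{-m}\}$ of a node chart, where $\theta_{j,t},\theta_{k,t}$ have poles of order $<m$ and $\tau_t\approx(\tfrac i2)(2\pi l_{e_P}|w|^2\log|t|^{-1})^{-1}dw\wedge d\overline w$, the integrand is $O((\log|t|^{-1})^{m-1}|w|^{2(m-1)-a_j-a_k})$ with nonnegative $|w|$‑exponent, so the neck integral is $O((\log|t|^{-1})^{m-1-2m})\to0$. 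Summing, $A(t)_{j,k}\to\sum_i(\tfrac i2)^m\int_{E_i}\psi_j\wedge\overline{\psi_k}/(\tilde\tau_0|_{E_i})^{m-1}$; since $\tilde\tau_0|_{E_i}$ is precisely the Narasimhan--Simha form in the pairing \eqref{eqnGenBergmanPairing} on each Type I summand, and both sides vanish on Type II summands, this equals $\langle\psi_j,\psi_k\rangle=\delta_{j,k}$ by the orthonormality of $\psi_{s+1},\dots,\psi_M$.

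For the diagonal $\langle\ttheta_{j,t},\ttheta_{j,t}\rangle\to1$ ($j\le s$), split $X_t$ into the two necks $\{|t|^{1/2}<|w|<(\log|t|^{-1})^{-m}\}$ and $\{|t|^{1/2}<|z|<(\log|t|^{-1})^{-m}\}$ of each node chart $U_P$ at the $N:=l_{e_j}$ nodes $P$ on the maximal inessential chain of $e_j$, and the complementary body. On a neck, Corollary \ref{corAsymptoticsTau} gives $\theta_{j,t}\approx w^{-m}dw^{\otimes m}$ (residue $\pm1$) and $\tau_t\approx(\tfrac i2)(2\pi l_{e_j}|w|^2\log|t|^{-1})^{-1}dw\wedge d\overline w$, so $(\tfrac i2)^m\ttheta_{j,t}\wedge\overline{\ttheta_{j,t}}/\tau_t^{m-1}\approx(\tfrac i2)(2\pi l_{e_j}|w|^2\log|t|^{-1})^{-1}dw\wedge d\overline w$, whose integral over the neck is $(l_{e_j}\log|t|^{-1})^{-1}\big(\tfrac12\log|t|^{-1}-m\log\log|t|^{-1}\big)\to\tfrac1{2l_{e_j}}$; the $2N$ necks contribute $\to N/l_{e_j}=1$. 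On the body, the crude bound $(\tfrac i2)^m\ttheta_{j,t}\wedge\overline{\ttheta_{j,t}}/\tau_t^{m-1}\le C|\ttheta_{j,t}|^{2/m}$ (Corollary \ref{corTauTrivialBound}, valid everywhere) reduces matters to $\int_{\mathrm{body}}|\ttheta_{j,t}|^{2/m}=(2\pi l_{e_j}\log|t|^{-1})^{-1}\big(\|\theta_{j,t}\|_{X_t}'^{\,2/m}-\int_{\mathrm{necks}}|\theta_{j,t}|^{2/m}\big)$; using Lemma \ref{lemAsymptoticsPseudonorm} ($\|\theta_{j,t}\|_{X_t}'^{\,2/m}=2\pi l_{e_j}\log|t|^{-1}+O(1)$) and the neck computation ($\int_{\mathrm{necks}}|\theta_{j,t}|^{2/m}=2\pi l_{e_j}\log|t|^{-1}-O(\log\log|t|^{-1})$), this is $O\!\big(\log\log|t|^{-1}/\log|t|^{-1}\big)\to0$. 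Hence $\langle\ttheta_{j,t},\ttheta_{j,t}\rangle\to1$, i.e.\ $B_{j,j}\approx(2\pi l_{e_j}\log|t|^{-1})^m$, which together with the preceding steps gives the stated form of $A(t)$.
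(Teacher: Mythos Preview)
Your proof is correct and follows essentially the same route as the paper's: localize by a partition of unity into adapted charts, use Corollary~\ref{corAsymptoticsTau} for the leading behaviour of $\tau_t$ on necks and bodies, and use the crude bound of Lemma~\ref{lemTrivialBoundIntegrand} (equivalently Corollary~\ref{corTauTrivialBound}) to control everything else. Your explicit ``pole--disjointness'' observation for the off--diagonal of $B$ is exactly what the paper sweeps under ``easy to verify using Equations~\eqref{eqnTheta1TR1}--\eqref{eqnThetaJTR2}'', and your neck/body split for the diagonal of $B$ matches the paper's computation up to the harmless rescaling $\theta\leftrightarrow\ttheta$.

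One small slip in the \emph{Plan}: statement~(a) as written, $\langle\ttheta_{j,t},\ttheta_{k,t}\rangle\to\delta_{j,k}$ for $j,k>s$, is not what $F\to\mathbf I_{M-s}$ reduces to, since $A(t)_{j,k}=\|\psi_j\|'_{\tilde\X_0}\|\psi_k\|'_{\tilde\X_0}\,\langle\ttheta_{j,t},\ttheta_{k,t}\rangle$ and the $\psi_j$ are orthonormal for the Hermitian pairing~\eqref{eqnGenBergmanPairing}, not for $\|\cdot\|'$. Fortunately your actual $F$--paragraph ignores~(a) and directly proves $A(t)_{j,k}\to\langle\psi_j,\psi_k\rangle=\delta_{j,k}$, which is the correct statement; so the slip is purely expositional and does not affect the argument.
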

\begin{proof}
  \newcommand{\integrand}{\frac{\theta_{j,t} \wedge \overline{\theta_{k,t}}}{\tau^{n-1}}}
  \newcommand{\errorA}{O\left(\frac{1}{(\log|t|^{-1})^{1/2}}\right)}
  We use a partition of unity argument to reduce the problem of computing the integral $\int_{X_t} \frac{\theta_{j,t} \wedge \overline{\theta_{k,t}}}{\tau_t^{m-1}}$ to computing it on adapted coordinate charts.
  The result follows by computing the integrals using Equations \eqref{eqnTheta1TR1}--\eqref{eqnThetaJTR2}, Corollary \ref{corAsymptoticsTau} and Lemma \ref{lemTrivialBoundIntegrand}. Since the leading term of $\tau$ is differs in different region of coordinate charts adapted to a node, we will have break up such a chart into different regions while analyzing the integral. We only show how to get the entries of $B$ and $F$. The estimates for entries of $C$ are obtained using similar techniques.

  To get the asymptotics for $F$, note that on a coordinate chart $(U_1,t,w)$ adapted to a Type I irreducible component $E$, we have that $\tau_t \to \tilde{\tau}_0$ and that $\tilde{\tau}_0|_{E}$ is nowhere vanishing. 
  
  Furthermore, there exists an integrable function on $\D$ that dominates $\left|\frac{\theta_{j,t} \wedge \overline{\theta_{k,t}}}{\tau_t^{m-1}}\right|$ for all $t$ small enough. One way to see this is by using Lemma \ref{lemTrivialBoundIntegrand}; we get  $\left|\frac{\theta_{j,t}(w) \wedge \overline{\theta_{k,t}(w)}}{\tau_t^{m-1}(w)}\right| \leq C |\theta_{j,t}(w) \wedge \overline{\theta_{k,t}(w)}|^{1/m} \leq C' |w|^{-2+1/m}$.   
  Thus by the dominated convergence theorem, we have that
  $$ \int_{U_1 \cap X_t} \frac{\theta_{j,t} \wedge \overline{\theta_{k,t}}}{\tau_t^{m-1}} \to \int_{U_1 \cap E} \frac{\psi_{j} \wedge \overline{\psi_{k}}}{\tilde{\tau}_0^{m-1}}$$
  as $t \to 0$ for all $s+1 \leq j,k \leq M$.

  If $E$ is of Type II, then using Lemma \ref{lemTrivialBoundIntegrand}, we have that $\left|\frac{\theta_{j,t} \wedge \overline{\theta_{k,t}}}{\tau_t^{m-1}}\right| \leq C |\theta_{j,t} \wedge \overline{\theta_{k,t}}|^{1/m}$. But since $\theta_{j,t} \to \psi_j = 0$ on $U_1$ for all $s+1 \leq j \leq M$, we get that
  $$ \left|\frac{\theta_{j,t} \wedge \overline{\theta_{k,t}}}{\tau_t^{m-1}}\right| \to 0 $$
  and
  $$ \int_{U_1 \cap X_t} \frac{\theta_{j,t} \wedge \overline{\theta_{k,t}}}{\tau_t^{m-1}} \to 0 $$
for all $s+1 \leq j \leq M$  if $E$ is of Type II. 

For a coordinate chart $(U_2,z,w)$ adapted to a node $P = E_1 \cap E_2$, to estimate the integral $\int_{|t|^{1/2} < |w| < 1} \frac{\theta_{j,t} \wedge \overline{\theta_{k,t}}}{\tau_t^{n-1}}$, observe that its pointwise limit is $\frac{\psi_{j} \wedge \overline{\psi_{k}}}{\tilde{\tau}_0^{m-1}}$ if $E_1$ is of Type I and is 0 if $E_1$ is of Type II. Since $\psi_{s+1}, \dots, \psi_M$ was chosen to be an orthonormal basis for the Hermitian pairing \eqref{eqnGenBergmanPairing}, we get the asymptotics for $F$.

To analyze the diagonal entries of $B$, without loss of generality consider $B_{11}$. The estimate using Lemma \ref{lemTrivialBoundIntegrand} and Equations \eqref{eqnThetaJTR1} and \eqref{eqnThetaJTR2} shows that $  \int_{U \cap X_t} \frac{\theta_{1,t} \wedge \overline{\theta_{1,t}}}{\tau_t^{m-1}} = O((\log|t|^{-1})^{m-1})$
if $U$ is either a coordinate chart adapted to an irreducible component, or a coordinate chart adapted to a node at which $\psi$ does not develop a pole of order $m$.

  To get the leading term for $B_{11}$, consider a coordinate chart $(U_2,z,w)$ adapted to a node $P$ such that $\psi_1$ has a pole of order $m$ at $P$. Consider the region $\{ |t|^{1/2} < |w| < \frac{1}{(\log|t|^{-1})^m} \}$.  Using Equation \eqref{eqnTheta1TR1} and Corollary \ref{corAsymptoticsTau}, we get that 
  $$
   \frac{|\theta_{1,t} \wedge \overline{\theta_{1,t}}|}{\tau_t^{m-1}} \approx \frac{(2\pi l_{e_1}\log|t|^{-1})^{m-1} |dw \wedge d \overline w|}{|w|^2}
  $$
 in this region.
 Then,
 \begin{align*}
 \int_{|t|^{1/2} < |w| < \frac{1}{(\log|t|^{-1})^m} }\frac{|\theta_{1,t} \wedge \overline{\theta_{1,t}}|}{\tau_t^{m-1}}
   &\approx \int_{|t|^{1/2} < |w| < \frac{1}{(\log|t|^{-1})^m}} \frac{(2\pi l_{e_1}\log|t|^{-1})^{m-1} |dw \wedge d \overline w|}{|w|^2} \\
   &=  2\pi \left( (2\pi l_{e_1}\log|t|^{-1})^{(m-1)} \int_{|t|^{1/2}}^{(\log|t|^{-1})^{-m}} \frac{dr}{r} \right)  \\
   &\approx \pi (2\pi l_{e_1})^{m-1}(\log|t|^{-1})^m.
 \end{align*}

 Using Lemma \ref{lemTrivialBoundIntegrand}, we get that  $$\int_{\frac{1}{(\log|t|^{-1})^m}< |w| < 1}\frac{|\theta_{1,t} \wedge \overline{\theta_{1,t}}|}{\tau_t^{m-1}}  \leq C (\log|t|^{-1})^{m-1}\int_{\frac{1}{(\log|t|^{-1})^m}< |w| < 1}|\theta_{1,t}|^{2/m} .$$ An easy computation shows that the integral on the right-hand side is of the order of $(\log|t|^{-1})^{m-1}\log(\log|t|^{-1})$, which is a subdominant term. Thus, we see that
 $$ \int_{|t|^{1/2}< |w| < 1} \frac{|\theta_{1,t} \wedge \overline{\theta_{1,t}}|}{\tau_t^{m-1}} \approx \pi (2\pi l_{e_1})^{m-1}(\log|t|^{-1})^m $$ and
 $$  \int_{U_2 \cap X_t} \frac{|\theta_{1,t} \wedge \overline{\theta_{1,t}}|}{\tau_t^{m-1}} \approx 2\pi (2\pi l_{e_1})^{m-1}(\log|t|^{-1})^m.$$

Since  $\psi_1$ has a pole of order $m$ along $l_{e_1}$ many nodes, we get that
 $$ B_{11} = \int_{X_t } \frac{|\theta_{1,t} \wedge \overline{\theta_{1,t}}|}{\tau_t^{m-1}} \approx (2\pi l_{e_1} \log|t|^{-1})^m.$$

 To estimate $B_{j,k}$ for $j \neq k$, using Lemma \ref{lemTrivialBoundIntegrand}, we only need to estimate $(\log|t|^{-1})^{m-1}|\theta_{j,t} \wedge \overline{\theta_{k,t}}|^{1/m}$. It is easy to verify using Equations \eqref{eqnTheta1TR1}--\eqref{eqnThetaJTR2} that $|\theta_{j,t} \wedge \overline{\theta_{k,t}}|^{1/m} = O(1)$ if $1 \leq j, k \leq s$ and $j \neq k$, which gives us the estimate for $B$. 

\end{proof}

The asymptotics of $A(t)^{-1}$ now follows by using Gauss-Jordan elimination. 
\begin{cor}
\label{corAsymptoticsPluriAInverse}
  The matrix $A(t)^{-1}$ has the following form. 
  $$A^{-1} =
\begin{pmatrix}
  B' & (D')^* \\
  D' & F'
\end{pmatrix}$$
  
\noindent where $B'$  is an $s \times s$ diagonal matrix with diagonal entries $$B'_{i,i} \approx \frac{1}{(2\pi l_{e_i} \log|t|^{-1})^{m}},$$
$B_{j,k} = O\left(\frac{1}{(\log|t|^{-1})^{m+1}}\right)$ for $i \neq j$,  $D' = O\left(\frac{1}{(\log|t|^{-1})^{\frac{m+1}{2}}}\right)$ and $F'$ is a $(M-s) \times (M-s)$ matrix such that $F' \to \mathbf{I}_{M-s}$ as $t \to 0$. \qed
\end{cor}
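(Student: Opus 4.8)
The plan is to invert the block matrix
$$A = \begin{pmatrix} B & D^* \\ D & F \end{pmatrix}$$
whose blocks are controlled by Lemma~\ref{lemAsymptoticsA}, using the Schur complement formula. Recall that $B$ is $s \times s$ with diagonal entries $B_{i,i} \approx (2\pi l_{e_i}\log|t|^{-1})^m$ and off-diagonal entries $O((\log|t|^{-1})^{m-1})$; $D = O((\log|t|^{-1})^{(m-1)/2})$; and $F \to \I_{M-s}$. The key qualitative point is that $B$ is \emph{diagonally dominant} with diagonal entries of size $(\log|t|^{-1})^m$ and off-diagonal entries one full power of $\log|t|^{-1}$ smaller, so $B$ is invertible for $|t|$ small and $B^{-1}$ is itself approximately diagonal with $(B^{-1})_{i,i} \approx (2\pi l_{e_i}\log|t|^{-1})^{-m}$ and off-diagonal entries $O((\log|t|^{-1})^{-m-1})$. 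Meanwhile $F$ is invertible for small $|t|$ with $F^{-1} \to \I_{M-s}$.

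The steps, in order, are as follows. First I would record the standard block-inversion identity
$$A^{-1} = \begin{pmatrix} B^{-1} + B^{-1}D^* S^{-1} D B^{-1} & -B^{-1}D^* S^{-1} \\ -S^{-1} D B^{-1} & S^{-1} \end{pmatrix}, \qquad S = F - D B^{-1} D^*,$$
the Schur complement of $B$. Second, I would estimate $S$: since $D B^{-1} D^* = O((\log|t|^{-1})^{(m-1)/2}) \cdot O((\log|t|^{-1})^{-m}) \cdot O((\log|t|^{-1})^{(m-1)/2}) = O((\log|t|^{-1})^{-1}) \to 0$, we get $S = F + o(1) \to \I_{M-s}$, hence $S^{-1} \to \I_{M-s}$; this gives the claimed asymptotics for $F'$. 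Third, the off-diagonal block: $D' = -S^{-1} D B^{-1} = O(1)\cdot O((\log|t|^{-1})^{(m-1)/2}) \cdot O((\log|t|^{-1})^{-m}) = O((\log|t|^{-1})^{-(m+1)/2})$, as claimed. Fourth, the top-left block: $B' = B^{-1} + B^{-1}D^*S^{-1}DB^{-1}$; the correction term is $O((\log|t|^{-1})^{-m})\cdot O((\log|t|^{-1})^{(m-1)/2})\cdot O(1) \cdot O((\log|t|^{-1})^{(m-1)/2}) \cdot O((\log|t|^{-1})^{-m}) = O((\log|t|^{-1})^{-m-1})$, which is subdominant to the diagonal of $B^{-1}$ and of the same size as its off-diagonal entries, so $B'$ has diagonal entries $\approx (2\pi l_{e_i}\log|t|^{-1})^{-m}$ and off-diagonal entries $O((\log|t|^{-1})^{-m-1})$. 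The Hermitian (conjugate-symmetric) structure of $A^{-1}$ is inherited from that of $A$.

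The main obstacle is really just carefully verifying the invertibility and the sharp asymptotics of $B^{-1}$: one must argue that inverting an $s\times s$ matrix whose diagonal is of order $(\log|t|^{-1})^m$ and whose off-diagonal is of order $(\log|t|^{-1})^{m-1}$ produces a matrix whose diagonal is $\approx (2\pi l_{e_i}\log|t|^{-1})^{-m}$ (with the \emph{correct} leading constant, not merely the correct order) and whose off-diagonal is $O((\log|t|^{-1})^{-m-1})$. This follows by writing $B = L(\I + L^{-1}N)L$ where $L = \mathrm{diag}((2\pi l_{e_i}\log|t|^{-1})^{m/2})(1+o(1))$ captures the diagonal and $N$ the off-diagonal part with $L^{-1}NL^{-1} = O((\log|t|^{-1})^{-1}) \to 0$, so that $B^{-1} = L^{-1}(\I + O((\log|t|^{-1})^{-1}))L^{-1}$ by the Neumann series; expanding gives exactly the stated estimates. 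Everything else is bookkeeping with the order-of-magnitude arithmetic of powers of $\log|t|^{-1}$, and the conclusion about $A^{-1}$ then feeds directly into the formula $\mu_t = (\tfrac{i}{2})^m \sum_{j,k}(\overline{A(t)})^{-1}_{j,k}\,\theta_{j,t}\wedge\overline{\theta_{k,t}}/\tau_t^{m-1}$ used in the subsequent corollaries.
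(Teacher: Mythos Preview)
Your proposal is correct and is essentially a detailed execution of what the paper summarizes in one line as ``Gauss--Jordan elimination''; the Schur-complement block-inversion formula is just an organized way of carrying this out, and your order-of-magnitude bookkeeping is accurate. One small typo: your factorization should read $B = L(\I + L^{-1}NL^{-1})L$ (not $L(\I + L^{-1}N)L$), consistent with your subsequent correct estimate $L^{-1}NL^{-1} = O((\log|t|^{-1})^{-1})$.
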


The following corollaries prove Theorem \ref{thmB}. These are an easy consequence of Equations \eqref{eqnTheta1TR1}--\eqref{eqnThetaJTR2} and Corollaries \ref{corAsymptoticsTau} and \ref{corAsymptoticsPluriAInverse}. 
\begin{cor}
 \label{corB1}
  Let $(U_1,t,w)$ be a coordinated chart adapted to an irreducible component $E \subset \X_0$. Let $f$ be a compactly supported function on $U_1$. If $E$ is of Type I, then
  $$\sum_{j,k=1}^M (\overline{A(t)})^{-1}_{j,k}\int_{U_1 \cap X_t} f \frac{\theta_{j,t} \wedge \overline{\theta_{k,t}}}{\tau^{m-1}_t} \to \sum_{j=s+1}^M \int_{U_1 \cap E} f\frac{\psi_j \wedge \overline{\psi}_j}{\tilde{\tau}^{m-1}_0}$$
  and if $E$ is Type II, then
  $$\sum_{j,k=1}^M (\overline{A(t)})^{-1}_{j,k}\int_{U_1 \cap X_t} f \frac{\theta_{j,t} \wedge \overline{\theta_{k,t}}}{\tau^{m-1}_t} \to 0 $$
  as $t \to 0$. \qed
\end{cor}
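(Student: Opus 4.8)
The plan is to establish Corollary \ref{corB1} by combining the expression for $\mu_t$ in terms of the matrix $A(t)^{-1}$ with the asymptotics obtained in Corollary \ref{corAsymptoticsPluriAInverse} and the pointwise/dominated-convergence estimates for $\theta_{j,t}$ and $\tau_t$ established earlier. First I would recall the identity
$$\mu_t = \left(\frac{i}{2}\right)^m \sum_{j,k=1}^M (\overline{A(t)})^{-1}_{j,k} \frac{\theta_{j,t} \wedge \overline{\theta_{k,t}}}{\tau_t^{m-1}},$$
so that the quantity of interest is literally $\int_{U_1 \cap X_t} f \mu_t$. The strategy is then to split the double sum into three blocks according to the block decomposition of $A(t)^{-1}$: the $B'$-block ($1 \le j,k \le s$), the off-diagonal $D'$-blocks (one index in $[1,s]$, the other in $[s+1,M]$), and the $F'$-block ($s+1 \le j,k \le M$), and to show that on a chart $(U_1,t,w)$ adapted to an irreducible component the first two blocks contribute nothing in the limit while the third gives exactly the stated answer.

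The key steps, in order, are as follows. For the $F'$-block: on a Type I component, Equation \eqref{eqnThetaJTR2} gives $\theta_{j,t} \to \psi_j$ and Corollary \ref{corAsymptoticsTau} gives $\tau_t \to \tilde\tau_0$, which is nowhere vanishing on $E$; Lemma \ref{lemTrivialBoundIntegrand} supplies an integrable dominating function ($C'|w|^{-2+1/m}$) so that by dominated convergence $\int_{U_1 \cap X_t} f \frac{\theta_{j,t}\wedge\overline{\theta_{k,t}}}{\tau_t^{m-1}} \to \int_{U_1\cap E} f\frac{\psi_j\wedge\overline{\psi_k}}{\tilde\tau_0^{m-1}}$, and since $F' \to \I_{M-s}$ the $(j,k)$ with $j \ne k$ drop out and the diagonal survives, yielding $\sum_{j=s+1}^M \int_{U_1\cap E} f\frac{\psi_j\wedge\overline{\psi_j}}{\tilde\tau_0^{m-1}}$. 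For the $B'$-block: Lemma \ref{lemTrivialBoundIntegrand} gives $|\theta_{j,t}\wedge\overline{\theta_{k,t}}/\tau_t^{m-1}| \le C(\log|t|^{-1})^{m-1}|\theta_{j,t}\wedge\overline{\theta_{k,t}}|^{1/m}$ for $1\le j,k\le s$, and on a chart adapted to an irreducible component $|\theta_{j,t}\wedge\overline{\theta_{k,t}}|^{1/m}$ is uniformly bounded by an integrable function, so each such integral is $O((\log|t|^{-1})^{m-1})$; since $B'_{j,k} = O((\log|t|^{-1})^{-m})$, the product tends to $0$. For the $D'$-blocks: Lemma \ref{lemTrivialBoundIntegrand} with $\eta = \tfrac12$ bounds the integral by $O((\log|t|^{-1})^{(m-1)})$ while $D' = O((\log|t|^{-1})^{-(m+1)/2})$, so these also vanish. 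Finally, for a Type II component $E$ one has $\psi_{s+1}|_E = \dots = \psi_M|_E = 0$, hence $\theta_{j,t} \to 0$ on $U_1$ for $j \ge s+1$; combined with Lemma \ref{lemTrivialBoundIntegrand} and dominated convergence this kills the $F'$-block, and the $B'$ and $D'$ blocks vanish as before, giving the limit $0$.

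I do not expect a genuine obstacle here: all the hard analytic estimates (the asymptotics of $A(t)$, of $A(t)^{-1}$, of $\tau_t$, and the trivial bound in Lemma \ref{lemTrivialBoundIntegrand}) have already been proved. The only point requiring mild care is bookkeeping the powers of $\log|t|^{-1}$ to confirm that every block except the $F'$-diagonal is genuinely subdominant — in particular using $\eta_j = 0$ for $1\le j\le s$ versus $\eta_j = 1$ for $j \ge s+1$ from Corollary \ref{corTauTrivialBound} and matching exponents against the entries of $A(t)^{-1}$ in Corollary \ref{corAsymptoticsPluriAInverse}. Once those exponent counts are in place, the statement follows immediately, and the same template (with the same block split) will handle Corollaries \ref{corB2} and \ref{corB3} on charts adapted to a node, where one additionally partitions the annulus $\{|t|^{1/2} < |w| < 1\}$ into the regions $R_1$ and $R_2$ and uses Equations \eqref{eqnTheta1TR1}--\eqref{eqnThetaJTR1} in $R_1$.
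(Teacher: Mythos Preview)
Your approach is exactly what the paper has in mind (the paper leaves the proof to the reader, marking it \qed\ with only the remark that it follows from the earlier estimates). The block decomposition of the sum according to the block structure of $A(t)^{-1}$, combined with Lemma \ref{lemTrivialBoundIntegrand} and dominated convergence, is the right template.

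There are, however, two bookkeeping slips in your exponent counts that you should fix. First, you quote $\eta_j = 0$ for $1 \le j \le s$ and $\eta_j = 1$ for $j \ge s+1$, but Corollary \ref{corTauTrivialBound} has it the other way: $\eta_j = \tfrac12$ for $1 \le j \le s$ and $\eta_j = 0$ for $s+1 \le j \le M$. Second, and consequently, for the mixed $D'$-block one has $\eta_j + \eta_k = \tfrac12$, so Lemma \ref{lemTrivialBoundIntegrand} bounds the integrand by $C(\log|t|^{-1})^{(m-1)/2}|\theta_{j,t}\wedge\overline{\theta_{k,t}}|^{1/m}$ and the integral by $O\big((\log|t|^{-1})^{(m-1)/2}\big)$, not $O\big((\log|t|^{-1})^{m-1}\big)$ as you wrote. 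With your stated exponent the product against $D' = O\big((\log|t|^{-1})^{-(m+1)/2}\big)$ would actually blow up for $m \ge 4$. With the correct exponent $(m-1)/2$ the product is $O\big((\log|t|^{-1})^{-1}\big) \to 0$, and your argument goes through as written. The $B'$- and $F'$-block arguments are fine.
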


\begin{cor}
\label{corB2}
  Let $(U_2,z,w)$ be a coordinate chart adapted to a node $P = E_1 \cap E_2$. Let $f$ be a continuous function on $[0,1]$ and let $0 < \alpha < \beta < \frac{1}{2}$. Without loss of generality, suppose that $\psi_1$ develops a pole of order $m$ at $P$. Then,
  \begin{multline*}
 \sum_{j,k} (\overline{A(t)})^{-1}_{j,k}\int_{|t|^{\beta} < |w| < |t|^{\alpha}} f\left( \frac{\log|w|}{\log|t|}\right) \frac{ (\frac{i}{2})^m \theta_{j,t}(w) \wedge \overline{\theta_{k,t}(w)}}{\tau^{m-1}_t(w)} \to 
   \frac{1}{l_{e_1}} \int_{\alpha}^{\beta}f(u)du.
 \end{multline*} \qed
\end{cor}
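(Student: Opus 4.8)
The plan is to insert the expansion
$$\mu_t = \left(\frac{i}{2}\right)^m\sum_{j,k=1}^M (\overline{A(t)})^{-1}_{j,k}\,\frac{\theta_{j,t}\wedge\overline{\theta_{k,t}}}{\tau_t^{m-1}}$$
and to show that as $t\to 0$ only the $(j,k)=(1,1)$ term contributes to the integral. Note first that $\psi_1$ is the \emph{only} one of $\psi_1,\dots,\psi_M$ having a pole of order $m$ at $P$: the node $P$ determines a unique edge of $\Gamma_\X$, which lies in a unique maximal inessential chain and hence in a unique edge $e_1$ of $\tilde\Gamma$, and among $\psi_1,\dots,\psi_M$ only $\psi_1$ has nonzero residue at $P$. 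Note also that, since $0<\alpha<\beta<\frac12$, for $|t|$ small the annulus $\{|t|^\beta<|w|<|t|^\alpha\}$ is contained in the region $R_1=\{|t|^{1/2}<|w|<(\log|t|^{-1})^{-m}\}$, because $|t|^{1/2}<|t|^\beta$ and $|t|^\alpha\to0$ polynomially while $(\log|t|^{-1})^{-m}$ only logarithmically; so throughout the annulus the $R_1$-asymptotics of Section \ref{secConvergenceNS} are available.

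For the $(1,1)$ term I would combine $\theta_{1,t}\approx w^{-m}\,dw^{\otimes m}$ on $R_1$ (Equation \eqref{eqnTheta1TR1}), $\tau_t\approx\frac{|dw\wedge d\overline w|}{2\pi l_{e_1}|w|^2\log|t|^{-1}}$ on $R_1$ (Corollary \ref{corAsymptoticsTau}), and $(\overline{A(t)})^{-1}_{1,1}\approx(2\pi l_{e_1}\log|t|^{-1})^{-m}$ (Corollary \ref{corAsymptoticsPluriAInverse}; this entry is real since $A(t)$ is Hermitian) to get
$$(\overline{A(t)})^{-1}_{1,1}\left(\frac{i}{2}\right)^m\frac{\theta_{1,t}\wedge\overline{\theta_{1,t}}}{\tau_t^{m-1}}\ \approx\ \frac{1}{2\pi l_{e_1}\log|t|^{-1}}\cdot\frac{\frac{i}{2}\,dw\wedge d\overline w}{|w|^2}$$
on the annulus. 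Integrating $f\!\left(\frac{\log|w|}{\log|t|}\right)$ against the right-hand side, then passing to polar coordinates and substituting $u=\frac{\log|w|}{\log|t|}$ — the same computation as in the proof of Corollary \ref{corA2} — produces exactly $\frac{1}{l_{e_1}}\int_\alpha^\beta f(u)\,du$.

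It remains to show every term with $(j,k)\neq(1,1)$ tends to $0$, and this is the step I expect to be the main obstacle. Using Lemma \ref{lemTrivialBoundIntegrand} I would bound such a term by a fixed power of $\log|t|^{-1}$ times $\bigl|(\overline{A(t)})^{-1}_{j,k}\bigr|\int_{|t|^\beta<|w|<|t|^\alpha}\bigl|\theta_{j,t}\wedge\overline{\theta_{k,t}}\bigr|^{1/m}$, and then plug in the size of $(\overline{A(t)})^{-1}_{j,k}$ from Corollary \ref{corAsymptoticsPluriAInverse} together with the pole orders of $\theta_{j,t},\theta_{k,t}$ at $P$ read off from Equations \eqref{eqnTheta1TR1}--\eqref{eqnThetaJTR1}. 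The decisive point is that every $\psi_j$ with $j\neq1$ has pole order $\le m-1$ at $P$, so on the annulus $\bigl|\theta_{j,t}\wedge\overline{\theta_{k,t}}\bigr|^{1/m}=O\bigl(|w|^{-\delta}\,\tfrac{i}{2}\,dw\wedge d\overline w\bigr)$ with $\delta<2$, whether or not one of $j,k$ equals $1$; hence $\int_{|t|^\beta}^{|t|^\alpha}r^{\,1-\delta}\,dr=O(|t|^{c\alpha})$ for some $c>0$, and this polynomial-in-$|t|$ decay overwhelms the at-most-polynomial-in-$\log|t|^{-1}$ factors. The care required is in keeping track of which (if any) of $j,k$ equals $1$ — that index alone contributes the factor $|w|^{-2}$, non-integrable at the origin, responsible for the surviving term — and then checking in each of the remaining configurations ($j,k\le s$; one of $j,k$ at most $s$ and the other greater than $s$; $j,k>s$) that the polynomial smallness of the annular integral beats the accompanying power of $\log|t|^{-1}$, using respectively that $(\overline{A(t)})^{-1}_{j,k}$ is $O((\log|t|^{-1})^{-(m+1)})$ off the diagonal and $\approx(2\pi l_{e_j}\log|t|^{-1})^{-m}$ on it, is $O((\log|t|^{-1})^{-(m+1)/2})$, and is $O(1)$.
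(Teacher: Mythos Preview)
Your proposal is correct and follows the approach the paper intends: the paper records this corollary with a bare \qed, declaring it ``an easy consequence of Equations \eqref{eqnTheta1TR1}--\eqref{eqnThetaJTR2} and Corollaries \ref{corAsymptoticsTau} and \ref{corAsymptoticsPluriAInverse},'' and your write-up is exactly that consequence made explicit. The one stylistic difference is that for the vanishing of the $(j,k)\neq(1,1)$ terms you route through Lemma \ref{lemTrivialBoundIntegrand}, whereas in the parallel computation inside Corollary \ref{corB3} the paper instead plugs the $R_1$-asymptotic $\tau_t\approx\frac{|dw\wedge d\overline w|}{2\pi l_{e_1}|w|^2\log|t|^{-1}}$ directly into $\frac{\theta_{j,t}\wedge\overline{\theta_{k,t}}}{\tau_t^{m-1}}$; both routes yield a bound of the form (power of $\log|t|^{-1}$)$\times O(|t|^{c\alpha})$ and hence the same conclusion.
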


\begin{cor}
 \label{corB3}
  Let $(U_2,z,w)$ be a coordinate chart adapted to a node $P = E_1 \cap E_2$. Let $0 < \epsilon \ll \frac{1}{2}$. Let $f$ be a continuous compactly-supported function on the half-dumbbell $D_\epsilon \subset \D \times [0,\epsilon)$ and  let $r : \D \times [0,\epsilon) \to D_\epsilon$ be a strong deformation retract. Without loss of generality, suppose that $\psi_1$ develops a pole of order $m$ at $P$. If $E$ is of Type I, then,
  \begin{multline*}
 \sum_{j,k} (\overline{A(t)})^{-1}_{j,k}\int_{|t|^{\epsilon} < |w| < 1} f\left(r\left(w,\frac{\log|w|}{\log|t|}\right)\right) \frac{(\frac{i}{2})^m\theta_{j,t}(w) \wedge \overline{\theta_{k,t}(w)}}{\tau^{m-1}_t(w)} \to \\
 \frac{1}{l_{e_1}} \int_{0}^{\epsilon}f(u)du +
 \sum^M_{j=s+1} \int_{U_2 \cap E_1}f \frac{|\psi_j \wedge \overline{\psi}_j|}{\tilde{\tau}^{m-1}_0}.
  \end{multline*}
  If $E$ is of Type II, then,
  \begin{multline*}
 \sum_{j,k} (\overline{A(t)})^{-1}_{j,k}\int_{|t|^{\epsilon} < |w| < 1} f\left(r\left(w,\frac{\log|w|}{\log|t|}\right)\right) \frac{(\frac{i}{2})^m\theta_{j,t}(w) \wedge \overline{\theta_{k,t}(w)}}{\tau^{m-1}_t(w)} \to 
 \frac{1}{l_{e_1}} \int_{0}^{\epsilon}f(u)du 
  \end{multline*}
\end{cor}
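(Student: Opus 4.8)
The plan is to follow the strategy of the proof of Corollary \ref{corA3}, now carrying along the inverse Gram matrix $(\overline{A(t)})^{-1}$ whose asymptotics are recorded in Corollary \ref{corAsymptoticsPluriAInverse}, and then to invoke Lemma \ref{lemConvergenceMCCHC} to assemble Theorem \ref{thmB}. First I would split the annulus $\{|t|^{\epsilon} < |w| < 1\}$ into the \emph{neck region} $R_1' = \{|t|^{\epsilon} < |w| < (\log|t|^{-1})^{-m}\}$ and the \emph{curve region} $R_2' = \{(\log|t|^{-1})^{-m} < |w| < 1\}$, exactly as in the proofs of Corollaries \ref{corA3} and \ref{corB2}, and estimate the contribution of each pair $(j,k)$ to
\[
\sum_{j,k}(\overline{A(t)})^{-1}_{j,k}\int f\!\left(r\!\left(w,\tfrac{\log|w|}{\log|t|}\right)\right)\frac{(\tfrac{i}{2})^m\,\theta_{j,t}\wedge\overline{\theta_{k,t}}}{\tau_t^{m-1}}
\]
over each of the two regions separately.

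On the neck region I would argue that the only surviving term is $(j,k) = (1,1)$. Combining \eqref{eqnTheta1TR1} with Corollary \ref{corAsymptoticsTau} gives $\frac{(\frac{i}{2})^m\theta_{1,t}\wedge\overline{\theta_{1,t}}}{\tau_t^{m-1}} \approx (2\pi l_{e_1}\log|t|^{-1})^{m-1}\frac{|dw\wedge d\overline w|}{|w|^2}$ there, and multiplying by $(\overline{A(t)})^{-1}_{1,1} \approx (2\pi l_{e_1}\log|t|^{-1})^{-m}$ from Corollary \ref{corAsymptoticsPluriAInverse} yields exactly $\frac{|dw\wedge d\overline w|}{2\pi l_{e_1}|w|^2\log|t|^{-1}}$, whose integral over $R_1'$ converges, after the substitution $u = \log|w|/\log|t|^{-1}$, $\vartheta = \arg w$ and the pointwise-a.e. limit $f(r(|t|^u e^{i\vartheta},u))\to f(u)$, to $\frac{1}{l_{e_1}}\int_0^\epsilon f(u)\,du$, just as in Corollary \ref{corA3}. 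For every other pair $(j,k)$ I would combine the decay rates of the entries of the blocks $B', D', F'$ from Corollary \ref{corAsymptoticsPluriAInverse} with the estimate of Lemma \ref{lemTrivialBoundIntegrand} and the size bounds \eqref{eqnThetaJTR1}--\eqref{eqnThetaJTR2} to check that the corresponding integral over $R_1'$ tends to $0$.

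On the curve region $\tau_t \to \tilde{\tau}_0$ and $\theta_{j,t}\to\psi_j$, so Corollary \ref{corAsymptoticsPluriAInverse} shows that only the block $F' \to \mathbf{I}_{M-s}$ contributes in the limit and the sum collapses to $\sum_{j=s+1}^M\int_{R_2'\cap X_t}f(r(w,\tfrac{\log|w|}{\log|t|}))\frac{(\frac{i}{2})^m\theta_{j,t}\wedge\overline{\theta_{j,t}}}{\tau_t^{m-1}}$. If $E_1$ is of Type I, then $\tilde{\tau}_0|_{E_1}$ is nowhere vanishing, Lemma \ref{lemTrivialBoundIntegrand} supplies a fixed integrable dominating function, and $f(r(w,\tfrac{\log|w|}{\log|t|}))\to f(r(w,0)) = f(w)$; dominated convergence then produces $\sum_{j=s+1}^M\int_{U_2\cap E_1}f\,\frac{|\psi_j\wedge\overline{\psi_j}|}{\tilde{\tau}_0^{m-1}}$. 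If $E_1$ is of Type II, then $\psi_j|_{E_1} = 0$ for $s+1\le j\le M$, hence $\theta_{j,t}\to 0$ on $U_2\cap E_1$, and bounding by $|\theta_{j,t}\wedge\overline{\theta_{j,t}}|^{1/m}$ via Lemma \ref{lemTrivialBoundIntegrand} forces this contribution to vanish. Summing the two regional contributions gives both displayed formulas.

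I expect the main obstacle to be the bookkeeping in the "other pairs $(j,k)$" steps: one must match, term by term, the growth or decay rate of each entry $(\overline{A(t)})^{-1}_{j,k}$ against the size of $\int_{R_i'}|\theta_{j,t}\wedge\overline{\theta_{k,t}}|/\tau_t^{m-1}$, which depends on whether $j,k\le s$ or $j,k\ge s+1$ and on which of the two regions one is in, so as to confirm that precisely the asserted terms survive. This is, however, the same combinatorial estimate already performed for Corollaries \ref{corB1} and \ref{corB2}, so it should be routine; the only mild subtlety is the Type II case, where the limit has to be taken through the $\tau_t^{m-1}$ in the denominator, which is why one uses Lemma \ref{lemTrivialBoundIntegrand} rather than naive pointwise convergence of the integrand.
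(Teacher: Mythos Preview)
Your proposal is correct and follows essentially the same approach as the paper: split the annulus at $|w| = (\log|t|^{-1})^{-m}$, show on the neck region that only the $(j,k)=(1,1)$ term survives and compute it via the substitution $u = \log|w|/\log|t|$, and on the curve region use the pointwise limits $\tau_t \to \tilde\tau_0$, $\theta_{j,t}\to\psi_j$, and $F'\to\mathbf{I}_{M-s}$ together with dominated convergence (splitting into Type I and Type II as you describe). The paper's proof is slightly terser about the ``other pairs'' bookkeeping, but the argument is the same.
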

\begin{proof}
  We break up the region $\{ |t|^{\epsilon} < |w| < 1 \}$ into two regions: $\{|t|^{\epsilon} < |w| < \frac{1}{(\log|t|^{-1})^m} \}$ and $\{\frac{1}{(\log|t|^{-1})^m} < |w| < 1 \}$. We analyze the integral separately on each region.

  Let us first analyze the integral on the region $\{\frac{1}{(\log|t|^{-1})^{m}} < |w| < 1\}$. It follows from the asymptotics of $A^{-1}_{j,k}$, $\theta_{j,t}$ and $\tau_t$ that the pointwise limit of 
  $$\sum_{j,k} (\overline{A(t)})^{-1}_{j,k} f\left(r\left(w,\frac{\log|w|}{\log|t|}\right)\right) \frac{\theta_{j,t}(w) \wedge \overline{\theta_{k,t}(w)}}{\tau^{m-1}_t(w)} $$
  is $$ \sum^M_{j=s+1}f(w) \frac{\psi_j \wedge \overline{\psi}_j}{\tilde{\tau}^{m-1}_0} $$
  as $t \to 0$ if $E$ is of Type I, otherwise the limit is 0. Using the dominated convergence theorem, we get that
  \begin{multline*} \sum_{j,k} (\overline{A(t)})^{-1}_{j,k} \int_{\frac{1}{(\log|t|^{-1})^{1/4}} < |w| < 1} f\left(r\left(w,\frac{\log|w|}{\log|t|}\right)\right) \frac{\theta_{j,t}(w) \wedge \overline{\theta_{k,t}(w)}}{\tau^{m-1}_t(w)}  \to \\ \sum^M_{j=s+1} \int_{U_2 \cap E_1} f(w) \frac{\psi_j \wedge \overline{\psi}_j}{\tilde{\tau}^{m-1}_0}
  \end{multline*}
if $E$ is of Type I, otherwise the limit is 0. 

To estimate the integral in the region $\{|t|^{\epsilon} < |w| < \frac{1}{(\log|t|^{-1})^{m}}\}$, note that $\tau_{t} \approx \frac{|w|^{-2}}{2\pi l_{e_1}\log|t|^{-1}}$, $\theta_{1,t} \approx w^{-m}$ and $\theta_{j,t} = O(|w|^{1-m})$ for $2 \leq j \leq M$ uniformly in $t$. Thus,
$$ 
(\overline{A(t)})^{-1}_{j,k} \int_{|t|^{\epsilon} < |w| < \frac{1}{(\log|t|^{-1})^{m}} } \frac{\theta_{j,t}(w) \wedge \overline{\theta_{k,t}(w)}}{\tau^{m-1}_t(w)}  \to 0$$
unless $j=k=1$ and
\begin{align*}
  (\overline{A(t)})^{-1}_{1,1} \int_{|t|^{\epsilon} < |w| < \frac{1}{(\log|t|^{-1})^{m}}} f\left(r\left(w,\frac{\log|w|}{\log|t|}\right)\right) \frac{|\theta_{1,t}(w) \wedge \overline{\theta_{1,t}(w)}|}{\tau^{m-1}_t(w)}  \\
  \approx \frac{1}{2\pi l_{e_1} \log|t|^{-1}} \int_{|t|^\epsilon < |w| < \frac{1}{(\log|t|^{-1})^{m}}} f\left(r\left(w,\frac{\log|w|}{\log|t|}\right)\right) \frac{|dw \wedge d\overline{w}|}{|w|^2}
 \end{align*}

 Now we use a change of variables $u = \frac{\log|w|}{\log|t|}$ and $\vartheta = arg(w)$ to get
 $$ \frac{1}{2\pi l_{e_1}} \int_{\frac{m\log(\log|t|^{-1})}{\log|t|^{-1}}}^{\epsilon} \int_0^{2\pi} f(r(|t|^{u}e^{i\vartheta},u)) d\vartheta du .$$
 As $t \to 0$, we have $f(r(|t|^{u}e^{i\vartheta},u)) \to f(r(0,u)) = f(u)$. Thus, we get
\begin{multline*}
(\overline{A(t)})^{-1}_{1,1} \int_{|t|^{1/2} < |w| < \frac{1}{(\log|t|^{-1})^{m}}} f\left(r\left(w,\frac{\log|w|}{\log|t|}\right)\right) \frac{|\theta_{1,t}(w) \wedge \overline{\theta_{1,t}(w)}|}{\tau^{n-1}_t(w)} \to \\ \frac{1}{l_{e_1}} \int_0^\epsilon f(u)du
\end{multline*}  
\end{proof}

\subsection{Limit of $\mu_0$ as $m \to \infty$ for a fixed $B$}
\label{subsecLimitMToInfty}
We would like to understand the limit of $\mu_0$ as $m \to \infty$. Firstly, note that if $m$ is large enough then the minimal snc model of $(X,\frac{1}{m}B)$ is just the minimal semistable model of $X$. So, let $\X$ denote the minimal semistable model of $X$ in this section.

We only compute the limit on $\Gamma_\X$ and not on $\Delta_\CC(\X)$ as it is not clear to us what the limit behavior would look like near a smooth point in $\X_0$.

Let $\mu_t^{(m,B)}$ denote the pluri-Bergman measure induced by $\omega_{X_t}^{\otimes m}(B|_{X_t})$. Let $\mu_0^{(m,B)}$ be the weak limit $\mu_t^{(m,B)}$ on $X^\hyb$ as $t \to 0$. Then $\mu_0^{(m,B)}$ is a sum of Dirac mass on the vertices of $\Gamma_\X$ and Lebesgue measure on the edges on the edges of $\Gamma_\X$ for $m$ large enough. Note that the total mass of $\mu_0^{(m,B)}$ is:
$$ \mu_0^{(m,B)}(\Gamma_\X) =  \mu_t^{(m,B)}(X_t) = h^0(X_t,\omega_{X_t}(B|_{X_t})) = (2m-1)(g-1) + \deg(B|_{X_t}).$$

Let $$\mu_{0}^{(\infty)} := \lim_{m \to \infty} \frac{2(g-1)\mu_{0}^{(m,B)}}{(2m-1)(g-1)+\deg(B|_{X_t})}$$
be the weak limit of $\mu_0^{(m,B)}$ normalized to have volume $2g-2$. Here the limit is taken in the sense of Radon measures on $\Gamma_\X$. Let us compute $\mu_0^{(\infty)}$. 

Note that $\mu_0^{(m,B)}$ restricted to an edge $e$ is the Lebesgue measure of total mass $\frac{1}{N}$, where $N$ is the length of the maximal inessential chain
containing $e$. Note that $N$ is independent of $m$ for $m \gg 0$. Then,
$$\mu_0^{(\infty)}|_e = \lim_{m \to \infty} \frac{(2g-2)dx}{N((2m-1)(g-1)+\deg(B|_{X_t}))} = 0.$$
Thus, $\mu_0^{(\infty)}$ places no mass on the edges of $\Gamma_\X$.

Let $v$ be a vertex in $\Gamma_\X$ and let $E \subset \X_0$ be the associated irreducible component. It follows from Theorem \ref{thmB} that the mass of $\mu_0^{(m)}$ on $v_E$ is
$$ \mu_0^{(m,B)}(\{v_E\}) = h^0(mK_{E} + (m-1)\sum_{E' \neq E}(E \cap E') + \B|_{E})$$
if $E$ is a Type I component, otherwise it is 0. Note that for a fixed $B$ and $m$ large enough, being Type I is equivalent to being essential.

Thus,
$$ \mu_0^{(m,B)}(\{v_E\}) = (2g(E)-2)m + (m-1)\val(E) + \deg(\B|_{E})$$
if $E$ is essential, otherwise $\mu_0^{(m,B)}(v) = 0$.

We get that

\begin{align*}
  \mu_0^{(\infty)}(\{v_E\}) &= \lim_{m \to \infty} \frac{(2g-2)((2g(E)-2)m + (m-1)\val(E) + \deg(\B|_{E}))}{(2m-1)(g-1)+\deg(B|_{X_t})} \\
  &= 2g(E) - 2 + \val(E)
\end{align*}

if $E$ is essential, otherwise $\mu_0^{\infty}(v) = 0$. Thus, we get that
$$ \mu_0^{(\infty)} = \sum_{v \in V(\Gamma_\X)} (2g(v) - 2 + \val(v))\delta_v,$$
which is also the limit of the hyperbolic measures on $X_t$ \cite[Theorem A]{Sch19}.

\subsection{Limit of $\mu_0$ as $m \to \infty$ for fixed $\frac{1}{m}B$}
Another way to think of the limit of $\mu_0$ is to vary $B$, while fixing the $\Q$-divisor $\frac{1}{m}B$. Let $\mu_t^{(km,kB)}$ denote the pluri-Bergman measure on $X_t$ associated to $\Omega_{X_t}^{\otimes km}(kB|_{X_t})$. Let $\X$ denote the minimal snc model of $(X,\frac{1}{m}B)$. Note that $\X$ is also the minimal snc model of $(X,\frac{1}{km}kB)$ for all $k \geq 1$. Let $\mu_0^{(km,kB)}$ be the weak limit $\mu_t^{(km,kB)}$ as $t \to 0$ on $X^\hyb$. Then, $\mu_0^{(km,kB)}$ is supported on $\Gamma_\X$.  

Let $$\mu_0^{[\infty]} := \lim_{k \to \infty} \frac{(2g-2 + \frac{\deg(B|_{X_t})}{m})\mu_{0}^{(km,kB)}}{(2km-1)(g-1)+k\deg(B|_{X_t})} $$

be the limit of $\mu_0^{(km,kB)}$ normalized to volume $2g-2+ \frac{\deg(B|_{X_t})}{m}$.

A similar computation shows that the $\mu_{0}^{[\infty]}$ places no mass on the edges in $\Gamma_\X$.

The mass of $\mu_{0}^{[\infty]}$ on a vertex $v_E$ associated to an irreducible component $E$ is
\begin{align*}
  \mu_0^{[\infty]}(\{v_E\}) &= \lim_{k \to \infty} \frac{(2g-2+\frac{\deg(B|_{X_t})}{m})((2g(E)-2)km + (km-1)\val(E) + k\deg(\B|_{E}))}{(2km-1)(g-1)+k\deg(B|_{X_t})} \\
  &= 2g(E) - 2 + \val(E) + \frac{\deg(\B|_{E})}{m}
\end{align*}
if $E$ is a component of Type I, otherwise the limit is 0. Note that for $k \gg 0$, the only Types I components that can show up are the inessential components and those components $E$ for which $g(E) = 0$, $\val(E) = 1$ and $\deg(\B|_E) = m$. Since $2g(E)-2+\val(E) + \frac{\deg(\B|_E)}{m}$ is $0$ in both these cases, we get that
$$ \mu_0^{[\infty]} = \sum_{v_E \in V(\Gamma_\X)} \left(2g(v_E) - 2 + \val(v_E) + \frac{\deg(\B|_{E})}{m}\right)\delta_{v_E}.$$

\section{Convergence on $\Mgbar$}
\newcommand{\0}{\mathbf{0}}
\renewcommand{\t}{\mathbf{t}}
In this section, we show that in the case of $g \geq 2$ and $B=0$, $\tau_t$ extends to a continuous family of measures on the Deligne-Mumford compactification, $\Mgbar$, of the moduli space of genus $g$ curves. Let $g \geq 2$ and $m \geq 2$ be fixed integers. 

Let $\Cgbar$ denote the universal curve over $\Mgbar$.

Let $S_\0$ be a stable curve of genus $g$. Recall that this means that $S_\0$ is a reduced curve of arithmetic genus $g$ with only nodal singularities and every rational irreducible component of $S_\0$ intersects the rest of the curve in at least three points. We define the Narasimhan-Simha measure $\tau$ on $S_\0$ associated to $\omega_{S_\0}^{\otimes m}$ to be a Radon measure on $S_\0$ which will be a sum of Narasimhan-Simha measures on the irreducible components and Dirac masses at the nodal points. More precisely,
\begin{itemize}
\item Let $E_i \subset S_\0$ is an irreducible component and let $P_1^{(i)},\dots,P_{r_i}^{(i)}$ be the nodal points that lie on $E_i$. Then, $\tau|_{E_i \setminus \{P_1^{(i)},\dots,P^{(i)}_{r_i} \}}$ is the Narasimhan-Simha measure on $E_i$ associated to the line bundle $\O_{E_i}(mK_{E_i}+(m-1)P_1^{(i)}+ \dots + (m-1)P_{r_i}^{(i)}).$
\item If $P$ is a nodal point in $S_\0$, then $\tau$ has a unit Dirac mass at $P$ i.e.~$\tau(\{P\}) = 1$.   
\end{itemize}

\subsection{The local picture}
The first step in proving Theorem \ref{mainThmC} is to reduce it to a local computation. To do this, we need to understand the local charts on $\Mgbar$ and $\Cgbar$. The key tool used here is a \emph{Kuranishi family} \cite[Chapter XI]{ACG11}. 

Let $S_\0$ be a stable curve. Roughly speaking, a Kuranishi family parametrizes the local deformations of $S_\0$. More precisely, a family of stable curves $\pi : S \to D$ is said to be a Kuranishi family for $\pi^{-1}(\0) = S_\0$  if for any other family of stable curves $\pi'\colon S' \to D'$ with $\phi_0 : (\pi')^{-1}(x_0) \simeq S_\0$, there exists a neighborhood of $U' \subset D'$ of $x_0$ and unique maps $\phi$, $\psi$ which make the diagram commute. 
$$
\begin{tikzcd}
  (\pi')^{-1}(x_0)  \ar[r,"\phi_0"] \ar[d,hook]&
  S_\0 \ar[d,hook]\\
  (\pi')^{-1}(U') \ar[r, "\phi"] \ar[d,"\pi'"] &
  S \ar[d,"\pi"] \\
  (U',x_0) \ar[r,"\psi"] &
  (D,\0)
\end{tikzcd}
$$

From this universal property, it follows that, up to shrinking the base, a Kuranishi family is unique up to an isomorphism. A Kuranishi family always exists for a stable curve. The total space $S$ is regular and the base $D$ is smooth over $\C$ and has dimension $3g-3$. We can choose coordinates $\t = (t_1,\dots,t_{3g-3})$ on $D$ so that $D \simeq \D^{3g-3}$ and the point $\0$ can be identified with the origin. Moreover, the coordinates $\t$ can be chosen in such a way a that a fiber over a point $\t$ has a node corresponding to $i$ iff $t_i = 0$, where $i=1,\dots,s$ is an enumeration of nodes in $S_\0$.  See \cite[Chapter XI]{ACG11} for details.

Thus, we can think of the coordinates $t_i$ as a coordinate that smoothens out the node $i$ for $i=1,\dots,s$ and the coordinates $t_i$ as varying the complex structure on the irreducible components of $S_\0$ for $i=s+1,\dots,3g-3$.

For $\t \in D$, let $S_\t$ denote the fiber over the point $\t$. By the adjunction formula, for a point $\t_0 \in D$, we have  $$\omega_S\left(\sum_{i=1}^{3g-3} \div(t_i - t_{0,i})\right) \Bigg|_{S_{\t_0}} \simeq \omega_{S_{\t_0}}.$$
Since $\div(t_i-t_{0,i})$ are principal divisors, we get that
$$ \omega_{S}|_{S_\t} \simeq \omega_{S_{\t}} $$ for all $\t \in D$, where the isomorphism is given by `unwedging' $dt_1\wedge\dots\wedge dt_{3g-3}$. 

We also get that $ \omega_{S}^{\otimes m}|_{S_\t} \simeq \omega^{\otimes m}_{S_{\t}} $ for all $\t \in D$. Since $h^0(S_\t,\omega_{S_\t}) = (2m-1)(g-1)$ is independent of $\t$, using Grauert's lemma \cite[Corollary III.12.9]{Har77}, we get that $\pi_*(\omega_{S}^{\otimes m})$ is a locally free sheaf on $B$ of rank $M = (2m-1)(g-1)$. 

Thus, we get an analog of Lemma \ref{lemExtendSections} i.e.~possibly after shrinking $D$, there exists $\theta_1,\dots,\theta_{M} \in H^0(S,\omega_S)$ such that $\theta_{i,\t} = \theta_i|_{S_\t}$ for $i=1,\dots,M$ form a basis of $H^0(S_\t,\omega_{S_\t})$ for all $\t \in D$. Recall that by $\theta_{i}|_{S_\t}$, we mean that we `unwedge' $(dt_1\wedge\dots\wedge dt_{3g-3})^{\otimes m}$ from $\theta_i$ and then restrict it to $S_\t$.

After performing a change of basis, we can assume that $\theta_i|_{S_\0}$ only has a pole of order $m$ with residue 1 along the $i$-th node in $S_\0$ for all $i=1,\dots,s$ and $\theta_i$ does not have a pole of order $m$ for all $i = s+1,\dots,M$. Now all we need to do is to repeat the analysis in Section \ref{secConvergenceNS}.

The $i$-th nodal point $P_i \in S_\0$ has a neighborhood $U$ in $S$ with coordinates $$(t_1,\dots,t_{i-1},z_i,w_i,t_{i+1},\dots,t_{3g-3})$$ such that the projection to $D$ is given by $t_i = z_iw_i$. Similar to the computation in \ref{secConvergenceNS}, we get that
$$ \theta_{j,\t} = \sum_{\alpha,\beta\geq 0,{\gamma} \in \N_{\geq 0}^{3g-4}} c^{(j)}_{\alpha,\beta,\gamma} t_i^{\alpha} w_i^{\beta - \alpha - m}(\mathbf{t'}_i)^{\gamma}dw_i^{\otimes m},$$
where $\mathbf{t'}_i = (t_1,\dots,\widehat{t_i},\dots,t_{3g-3})$ and $ |t_i|^{1/2} < |w_i| < 1 $.
Here, the residue of $\theta_{j,\0}$ along $P_i$ up to a sign is $c^{(j)}_{0,0,0}$.
Thus, $|c^{(j)}_{0,0,0}| = \delta_{ij}$ for  $j=1,\dots,M$. Similarly, we can also get such an expression in terms of the $z_i$ coordinate and we can also find an expression of $\theta_j$ in a neighborhood of smooth points of $S_\0$. 

We get an analog of Lemma \ref{lemAsymptoticsPseudonorm} by following the same techniques.
\begin{lem} For $i=1,\dots,s$, we have
  $$\| \theta_{i,\t} \|'_{S_\t} = (2\pi\log|t_i|^{-1})^{m/2} + O(1)$$
  and for $i = s+1,\dots,M$, we have
  $$\| \theta_{i,\t} \|'_{S_\t} \to \| \theta_{i,\0} \|'_{\tilde{S_\0}}$$
  as $\t \to 0$. \qed
\end{lem}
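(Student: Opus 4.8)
The plan is to reproduce the proof of Lemma~\ref{lemAsymptoticsPseudonorm} almost verbatim, the only structural change being that the single disc parameter $t$ is replaced by the Kuranishi parameter $\t=(t_1,\dots,t_{3g-3})$, so that the curves $S_\t$ may smooth several nodes at once. First I would fix, after shrinking $D$, a finite cover of $S_\0$ by coordinate charts of two kinds: charts $U_k$ around the $k$-th node $P_k$, in which $S$ has coordinates $(z_k,w_k,\mathbf{t'}_k)$ with $t_k=z_kw_k$, and charts around smooth points of $S_\0$. On each $U_k$ one shrinks further so that the power series for $\theta_j$ displayed above converges absolutely on a fixed polydisc; this is what makes all subsequent error terms uniform in the transverse variables $\mathbf{t'}_k$. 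Choosing a partition of unity subordinate to this cover reduces the problem to estimating the contribution of each chart to $\int_{S_\t}|\theta_{j,\t}|^{2/m}$.

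For $1\le i\le s$: in the chart $U_i$ the expansion gives, on $\{|t_i|^{1/2}<|w_i|<1\}$,
\[
\theta_{i,\t}=\Bigl(c_i(\t)\,w_i^{-m}+O(|w_i|^{1-m})\Bigr)\,dw_i^{\otimes m},
\]
with the $O(|w_i|^{1-m})$ uniform in $\t$ and $c_i(\t)=\sum_{\alpha\ge0,\gamma}c^{(i)}_{\alpha,\alpha,\gamma}t_i^{\alpha}(\mathbf{t'}_i)^{\gamma}\to c^{(i)}_{0,0,0}$, $|c^{(i)}_{0,0,0}|=1$, as $\t\to\0$; integrating $|w_i|^{-2}$ over the annulus $|t_i|^{1/2}<|w_i|<1$ and over the symmetric region in $z_i$ produces a contribution $2\pi\log|t_i|^{-1}+O(1)$. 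On every other chart $\theta_{i,\0}$ has a pole of order at most $m-1$, so for $\t$ small $|\theta_{i,\t}|^{2/m}$ is bounded there by a fixed integrable function --- of the form $C|w_k|^{-2(m-1)/m}$ near $P_k$, respectively a bounded function near a smooth point --- and dominated convergence shows these contributions stay $O(1)$ (in fact they converge to the corresponding pieces of $\int_{\tilde{S_\0}}|\theta_{i,\0}|^{2/m}$). Summing, $\int_{S_\t}|\theta_{i,\t}|^{2/m}=2\pi\log|t_i|^{-1}+O(1)$, and raising to the power $m/2$ gives $\|\theta_{i,\t}\|'_{S_\t}=(2\pi\log|t_i|^{-1})^{m/2}+O(1)$.

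For $s+1\le i\le M$: here $\theta_{i,\0}$ has no pole of order $m$ at any node of $S_\0$, so $|\theta_{i,\0}|^{2/m}$ is integrable on $\tilde{S_\0}$; since $\theta_{i,\t}\to\theta_{i,\0}$ pointwise on each chart as $\t\to\0$ (with the unwedging of $(dt_1\wedge\dots\wedge dt_{3g-3})^{\otimes m}$ understood) and is dominated by a fixed integrable function, dominated convergence yields $\int_{S_\t}|\theta_{i,\t}|^{2/m}\to\int_{\tilde{S_\0}}|\theta_{i,\0}|^{2/m}$, that is $\|\theta_{i,\t}\|'_{S_\t}\to\|\theta_{i,\0}\|'_{\tilde{S_\0}}$.

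The step requiring the most care --- and the only genuinely new feature compared with Lemma~\ref{lemAsymptoticsPseudonorm} --- is controlling the dependence on the $3g-4$ transverse Kuranishi directions $\mathbf{t'}_k$. For $\t\neq\0$ a section $\theta_i$ calibrated at the central fibre can acquire a small but nonzero $w_k^{-m}$-coefficient at the nodes $P_k$ with $k\neq i$ (whose $t_k$ is not yet $0$), and also the leading coefficient $c_i(\t)$ must be seen to converge to $c^{(i)}_{0,0,0}$ as $\t\to\0$ and not merely along coordinate axes; one has to check that these effects, together with the error terms in the node expansions and the integrable majorants on the regular charts, are all of lower order uniformly in the relevant regime. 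Once $D$ is shrunk so that the power series converge absolutely on a common polydisc this follows from the same elementary bounds used in Section~\ref{secConvergenceNS}, and everything else is a routine transcription of those computations.
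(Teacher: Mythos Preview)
Your proposal is correct and follows exactly the approach the paper indicates: the paper gives no explicit proof here, merely stating that the lemma is obtained ``by following the same techniques'' as Lemma~\ref{lemAsymptoticsPseudonorm}, and your argument is precisely that transcription, with appropriate attention to the multi-parameter base. One quibble: the final step ``raising to the power $m/2$ gives $(2\pi\log|t_i|^{-1})^{m/2}+O(1)$'' is not literally correct for $m\ge 3$ --- from $\int=2\pi\log|t_i|^{-1}+O(1)$ one only obtains $(2\pi\log|t_i|^{-1})^{m/2}+O((\log|t_i|^{-1})^{m/2-1})$, i.e.\ asymptotic equivalence $\approx$ as in Lemma~\ref{lemAsymptoticsPseudonorm} --- but this imprecision is already present in the paper's statement of the lemma and is harmless for everything that follows.
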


Let $\tau_\t$ denote the Narasimhan-Simha measure on $S_\t$. We also get an analog of Corollary \ref{corAsymptoticsTau}
\begin{lem}
  In the chart $U$ around $P_i$ described above, in the region $\{ |t_i|^{1/2} < |w_i| < \frac{1}{(\log|t_i|^{-1})^{m}}\}$ we have that
  $$\tau_\t \approx \frac{|dw_i \wedge d\overline{w_i}|}{2\pi |w_i|\log|t_i|^{-1}}.$$

  Away from such a region and near smooth points of $S_\0$, we have that $\tau_{\t} \to \tilde{\tau}_{\0}$, where $\tilde{\tau}_\0$ is the part of $\tau_\0$ without the Dirac masses. \qed
\end{lem}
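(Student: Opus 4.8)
The plan is to repeat, with the obvious modifications, the analysis of Section \ref{secConvergenceNS}, now carried out over the polydisc base $D\simeq\D^{3g-3}$ of the Kuranishi family rather than over a one-dimensional base. First I would record the normalizations
$$\ttheta_{i,\t}:=\frac{\theta_{i,\t}}{(2\pi\log|t_i|^{-1})^{m/2}}\quad(1\le i\le s),\qquad \ttheta_{i,\t}:=\frac{\theta_{i,\t}}{\|\theta_{i,\0}\|'_{\tilde{S_\0}}},\ \ \tpsi_i:=\frac{\theta_{i,\0}}{\|\theta_{i,\0}\|'_{\tilde{S_\0}}}\quad(s<i\le M),$$
so that the pseudonorm asymptotics (the analog of Lemma \ref{lemAsymptoticsPseudonorm} stated just above) give $\|\ttheta_{i,\t}\|'_{S_\t}\to 1$. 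The first substantive step is to prove the analog of Lemma \ref{lemNormEstimate}: for constants $c_1,\dots,c_M\in\C$,
$$\|c_1\ttheta_{1,\t}+\dots+c_M\ttheta_{M,\t}\|'_{S_\t}\approx\Big(\textstyle\sum_{i=1}^s|c_i|^{2/m}+\big(\|\sum_{i>s}c_i\tpsi_i\|'_{\tilde{S_\0}}\big)^{2/m}\Big)^{m/2}$$
as $\t\to\0$. I would prove this exactly as in Section \ref{secConvergenceNS}: by a partition of unity reduce to the coordinate charts around smooth points of $S_\0$ and around the nodes $P_1,\dots,P_s$ described above; on the node chart $U$ of $P_i$, split $\{|t_i|^{1/2}<|w_i|<1\}$ into the two annuli $\{|t_i|^{1/2}<|w_i|<(\log|t_i|^{-1})^{-m}\}$ and $\{(\log|t_i|^{-1})^{-m}<|w_i|<1\}$, using in the first annulus that the monomial $c^{(j)}_{0,0,0}w_i^{-m}dw_i^{\otimes m}$ dominates every other term of the power series of $\theta_{j,\t}$ — the correction terms being $O((\log|t_i|^{-1})^{-m})$ by the bounds $|w_i^{\beta-\alpha}t_i^\alpha|\le|t_i|^{(\alpha+\beta)/2}$ for $\alpha\ge\beta$ and $\le(\log|t_i|^{-1})^{-m(\beta-\alpha)}|t_i|^\alpha$ for $\beta\ge\alpha$ — and in the second annulus that $\theta_{j,\t}\to\theta_{j,\0}$ pointwise, with dominating functions of the form $C|w_i|^{-2+1/m}$ independent of $\t$ near $\0$.

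I would then deduce the lemma by the variational argument of Corollary \ref{corAsymptoticsTau}. Writing $\tau_\t=\sup\{\,|\theta|^{2/m}:\theta\in H^0(S_\t,\omega_{S_\t}^{\otimes m}),\ \|\theta\|'_{S_\t}=1\,\}$ and expanding $\theta$ in the basis $\ttheta_{1,\t},\dots,\ttheta_{M,\t}$, the estimate above yields
$$\tau_\t\approx\max\Big\{\,|c_1\ttheta_{1,\t}+\dots+c_M\ttheta_{M,\t}|^{2/m}\ :\ \textstyle\sum_{i=1}^s|c_i|^{2/m}+\big(\|\sum_{i>s}c_i\tpsi_i\|'_{\tilde{S_\0}}\big)^{2/m}=1\,\Big\}.$$
In the region $\{|t_i|^{1/2}<|w_i|<(\log|t_i|^{-1})^{-m}\}$, the relation $\ttheta_{i,\t}\approx w_i^{-m}dw_i^{\otimes m}/(2\pi\log|t_i|^{-1})^{m/2}$ together with the fact that every other $\theta_{j,\t}$ has a pole of order at most $m-1$ along $P_i$ gives $|\ttheta_{j,\t}/\ttheta_{i,\t}|=O((\log|t_i|^{-1})^{-m/2})$; hence the maximum is asymptotically attained at $|c_i|=1$ with the other coefficients zero, so $\tau_\t\approx|\ttheta_{i,\t}|^{2/m}$, which is the asymptotic formula displayed in the statement. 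Away from that region and near a smooth point of $S_\0$, one has $\ttheta_{j,\t}\to 0$ pointwise for $1\le j\le s$ (the factor $(2\pi\log|t_i|^{-1})^{-m/2}$ wins, since for $j\ne i$ the limit $\theta_{j,\0}$ contributes at most a pole of order $m-1$ at $P_i$, and $\theta_{i,\0}$ is fixed) and $\ttheta_{j,\t}\to\tpsi_j$ for $s<j\le M$; the $\max$ is therefore forced onto the subspace spanned by the $\tpsi_j$ and $\tau_\t\to\tilde\tau_\0$ pointwise, exactly as in the proof of Corollary \ref{corAsymptoticsTau}.

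The hard part will not be any single estimate but running all of the above uniformly in the direction of approach $\t\to\0$: the Kuranishi base has $3g-3$ coordinates, the smoothing coordinates $t_1,\dots,t_s$ may tend to $0$ at wildly different rates, and the transverse coordinates $t_{s+1},\dots,t_{3g-3}$ simultaneously vary the complex structures on the components of $S_\0$. Concretely, I would need to check that the power-series bounds and the dominated-convergence arguments hold with constants and dominating functions uniform on a full neighborhood of $\0$ in $D$, so that $\tau_\t\to\tilde\tau_\0$ is insensitive to the path — this uniformity is precisely what is needed downstream for the continuity statement of Theorem \ref{mainThmC}. A minor preliminary point is to verify, via the analog of Lemma \ref{lemSESVSn} for the stable curve $S_\0$ (where $\tilde\Gamma=\Gamma$, so $\#E(\tilde\Gamma)$ equals the number $s$ of nodes), that the basis $\theta_1,\dots,\theta_M$ can be chosen with $\theta_i|_{S_\0}$ having a single order-$m$ pole of residue $1$ at the $i$-th node for $i\le s$ and with $\theta_{s+1}|_{S_\0},\dots,\theta_M|_{S_\0}$ a basis of $\bigoplus_iH^0(E_i,mK_{E_i}+(m-1)\sum_jP^{(i)}_j)$, so that on the complementary region the supremum really does reduce to $\tilde\tau_\0$.
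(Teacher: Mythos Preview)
Your proposal is correct and follows essentially the same approach as the paper: the paper gives no proof at all (the $\qed$ at the end of the statement signals that the lemma is asserted as a direct analog of Corollary \ref{corAsymptoticsTau}, to be obtained by repeating the arguments of Section \ref{secConvergenceNS} in the Kuranishi-family setting). You have simply fleshed out the details the paper omits---the normalization of the $\ttheta_{i,\t}$, the analog of Lemma \ref{lemNormEstimate}, and the variational argument---and these are exactly the steps implicit in the paper's treatment.

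One remark: your concern about uniformity in the direction of approach $\t\to\0$ is well-placed and is indeed glossed over in the paper. The power-series bounds and dominated-convergence arguments do go through uniformly on a small polydisc around $\0$ (after the usual shrinking so that $\sum|c^{(j)}_{\alpha,\beta,\gamma}|<\infty$), but this deserves to be said; it is what makes Corollary \ref{corConvergenceKuranishi} work for arbitrary sequences $\t\to\0$. Your observation that $\tilde\Gamma=\Gamma$ for a stable curve (all edge lengths equal $1$) is also correct and explains why no $l_{e_i}$ factors appear in the displayed asymptotic.
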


We immediately get the following result, which is a local version of Theorem \ref{mainThmC}.
\begin{cor}
\label{corConvergenceKuranishi}
Let $f$ be a continuous compactly supported function on $S$. Then, $\int_{S_\t} f \tau_\t \to \int_{S_\0} f \tau_\0$ as $\t \to 0$. 
\end{cor}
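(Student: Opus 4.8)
The plan is to deduce Corollary~\ref{corConvergenceKuranishi} by running the argument of Section~\ref{secConvergenceNS} verbatim in the Kuranishi setting, with the disk $\D^*$ replaced by a small punctured polydisk in $D$ over which the fiber stays smooth, and then patching over $\Mgbar$. Concretely, I first fix a continuous compactly supported $f$ on $S$ and reduce, via a partition of unity, to the case where $f$ is supported in one of the adapted coordinate charts on $S$: either a chart $(U_1, \mathbf t, w)$ around a smooth point of $S_\0$ (on which all the coordinates $t_j$ are free), or a chart $U$ around the $i$-th node, with coordinates $(t_1,\dots,z_i,w_i,\dots,t_{3g-3})$ and projection $t_i = z_i w_i$. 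On the node chart I further split, exactly as in Section~\ref{secConvergenceNS}, into the regions $\{|t_i|^{1/2} < |w_i| < (\log|t_i|^{-1})^{-m}\}$ and $\{(\log|t_i|^{-1})^{-m} < |w_i| < 1\}$ (together with the mirror-image regions in $z_i$, and the $|z_i| > |t_i|^{1/2}, |w_i| > |t_i|^{1/2}$ overlap).

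Next I invoke the analogues of Lemma~\ref{lemAsymptoticsPseudonorm} and Corollary~\ref{corAsymptoticsTau} already stated above (the two displayed lemmas just before the corollary), which tell us that $\|\theta_{i,\mathbf t}\|'_{S_\mathbf t} \approx (2\pi\log|t_i|^{-1})^{m/2}$ for $i \le s$ and converges to $\|\theta_{i,\0}\|'_{\tilde S_\0}$ for $i>s$, and that $\tau_\mathbf t$ is asymptotic to $\frac{|dw_i \wedge d\overline{w_i}|}{2\pi|w_i|^2\log|t_i|^{-1}}$ in the collar region of the $i$-th node and converges to $\tilde\tau_\0$ elsewhere. (Strictly speaking one also needs the analogue of Lemma~\ref{lemNormEstimate}; since all maximal inessential chains of a stable curve have length $1$, every $l_{e_i}=1$, which is why the Dirac mass at each node is exactly $1$ and the constants simplify. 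I would state this as a one-line remark.) Granting these, the computation is identical to Corollaries~\ref{corA1}--\ref{corA3}: on the smooth-point chart $\tau_\mathbf t \to \tilde\tau_\0$ and dominated convergence gives $\int f\tau_\mathbf t \to \int f\tilde\tau_\0$; on the collar of the $i$-th node, a change of variables $u = \log|w_i|/\log|t_i|$, $\vartheta = \arg w_i$ turns $\int f \tau_\mathbf t$ over the collar into $\frac{1}{2\pi}\int_0^{\epsilon}\int_0^{2\pi} f(|t_i|^u e^{i\vartheta},\dots)\,d\vartheta\,du$, which tends to $f(P_i)\cdot 1$ — accounting precisely for the unit Dirac mass of $\tau_\0$ at $P_i$; and on the complementary region $\tau_\mathbf t \to \tilde\tau_\0$ again. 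Summing these contributions over the partition of unity yields $\int_{S_\mathbf t} f\tau_\mathbf t \to \int_{S_\0} f\tau_\0$ as $\mathbf t \to \0$.

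The main obstacle — and the only genuinely new input relative to Section~\ref{secConvergenceNS} — is making the estimates uniform over the extra ``moduli'' directions $t_{s+1},\dots,t_{3g-3}$, i.e.\ checking that when we restrict to $S_\mathbf t$ we may send the node-coordinates $t_1,\dots,t_s$ to $0$ while the remaining $t_j$ are merely small but not zero, and still have all the error terms and dominating functions controlled uniformly. This is handled exactly as in \eqref{eqnThetaJTR1}: the power-series expansion of $\theta_j$ in the node chart has the form $\sum c^{(j)}_{\alpha,\beta,\gamma} t_i^\alpha w_i^{\beta-\alpha-m}(\mathbf t'_i)^\gamma\, dw_i^{\otimes m}$ with $|c^{(j)}_{0,0,0}| = \delta_{ij}$, so after shrinking $D$ the $(\mathbf t'_i)^\gamma$ factors are uniformly bounded and the one-variable estimates of Section~\ref{secConvergenceNS} go through with $t$ replaced by $t_i$ and all other coordinates treated as bounded parameters; the dominating integrable functions (powers of $|w_i|$) are independent of $\mathbf t$. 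Once Corollary~\ref{corConvergenceKuranishi} is established, Theorem~\ref{mainThmC} follows by covering $\Mgbar$ with the images of Kuranishi families, quotienting by the (finite) automorphism groups — which is harmless since $\tau$ is $\Aut$-invariant, as noted in Section~\ref{subsecPluriBergman} — and using a partition of unity on $\Cgbar$ to reduce the continuity of $[C]\mapsto {\tau'}^m_C$ to the local statement just proved.
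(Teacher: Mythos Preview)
Your proposal is correct and takes essentially the same approach as the paper: reduce by a partition of unity, split the node chart into the collar $\{|t_i|^{1/2}<|w_i|<(\log|t_i|^{-1})^{-m}\}$ and its complement, invoke the Kuranishi analogues of Lemma~\ref{lemAsymptoticsPseudonorm} and Corollary~\ref{corAsymptoticsTau}, and observe that the two half-collars together contribute exactly the unit Dirac mass $f(P_i)$ (your remark that stable curves have all $l_{e_i}=1$ is precisely why). The paper's version is terser --- it simply quotes $\int \frac{|dw_i\wedge d\overline{w_i}|}{2\pi|w_i|^2\log|t_i|^{-1}}\to\tfrac12$ on each half --- whereas you spell out the change of variables and the uniformity in the transverse moduli directions $t_{s+1},\dots,t_{3g-3}$; note that your final paragraph on quotienting by $\Aut$ and patching over $\Mgbar$ is the content of the \emph{next} corollary, not this one.
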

\begin{proof}
  By using partitions of unity, we can assume that support of $f$ is small enough. If $f$ is supported in the chart $U$ described above, then using the fact that $$\int_{|t_i|^{1/2} < |w_i| < \frac{1}{(\log|t_i|^{-1})^{m}}} \frac{|dw_i \wedge d\overline{w_i}|}{2\pi |w_i|\log|t_i|^{-1}} \to \frac{1}{2},$$ we get
  \begin{align*}
\int_{|t_i|^{1/2} < |w_i| < 1} f \tau_\t &= \int_{|t_i|^{1/2} < |w_i| < \frac{1}{(\log|t_i|^{-1})^{m}}} f \tau_\t + \int_{\frac{1}{(\log|t_i|^{-1})^{m}} < |w_i| < 1} f \tau_\t
  \\ &\to \frac{f(P_i)}{2} + \int_{0 < |w_i| < 1} f \tilde{\tau}_{\0} 
  \end{align*}
  and
  $$ \int_{U \cap S_\t} f \tau_\t \to f(P_i) + \int_{U \cap S_\0} f \tilde{\tau}_\0  = \int_{U \cap S_\0} f \tau_\t $$ 
  as $\t \to 0$.

We also get a similar convergence near the smooth points of $S_\0$, which proves the result.   
\end{proof}

\begin{rmk}[Limit of pluri-Bergman measures]
  If we try to figure out the limit of the pluri-Bergman measures on $S$ using the techniques in Section \ref{secConvergenceCanonical}, then we run into an issue. As an analog of Lemma \ref{lemAsymptoticsA}, we will get that the matrix $A$ will have the first $s$ diagonal entries being $(2\pi\log|t_i|^{-1})^m$, but when we try to figure out the asymptotics of $A^{-1}$, we see that it will depend on the relative orders of magnitude of $\log|t_i|^{-1}$. Thus, it is unlikely that the limit of $\mu_t$ will exist on $S$. The limit of $\mu_t$ might exist on the hybrid space constructed by Amini and Nicolussi \cite{AN20} which keeps track of the order of magnitude of $\log|t_i|^{-1}$.
\end{rmk}

\subsection{The global picture}
Let $\pi : S \to D$ denote a Kuranishi family for a stable curve $S_\0$. Let $G = \Aut(S_\0)$ denote the (finite) group of self biholomorphisms of $S_\0$. From the universal property of the Kuranishi family, we see that $G$ acts on $S$ as well as $D$, after possibly shrinking $D$. 

Since $G$ is finite, the quotients $S/G$ and $D/G$ exist as normal complex analytic spaces. For our purposes, the underlying topological space is sufficient. 
The spaces $D/G$ and $S/G$ forms a neighborhood of the isomorphism class of $S_\0$ in $\Mgbar$ and its preimage in $\Cgbar$. Locally, the map $\Cgbar \to \Mgbar$ is given by $S/G \to D/G$. 
$$
\begin{tikzcd}[column sep = 4pt]
  S/G \ar[d] &\subset &
  \Cgbar \ar[d]\\
  D/G &\subset &
  \Mgbar  
\end{tikzcd}
$$

Note that $S \to D$ is also a Kuranishi family for $S_\t$ for all $\t \in D$ \cite[Corollary XI.4.9]{ACG11}. Thus, it follows that the stabilizer of a point $\t \in D$ under the action $G$ is $\Aut(S_\t)$. Let $\t,\t' \in D$ be two points in the orbit of the $G$-action on $D$. Then, the action of some element in $G$ provides a biholomorphism $S_\t \xrightarrow{\sim} S_{\t'}$ which induces a canonical biholomorphic map $S_{\t}/\Aut(S_\t) \xrightarrow{\sim} S_{\t'}/\Aut(S_{\t'})$ and is independent of the choice of the aforementioned element of $G$.

Thus, topologically, the fiber of the map $\Cgbar \to \Mgbar$ over the isomorphism class of a stable curve $C$ is $C/\Aut(C)$.

Recall from the construction of $\tau$ on a smooth genus $g$ curve $Y$ (see Section \ref{subsecPluriBergman}) that $\tau$ is invariant under the action of $\Aut(Y)$. It is not hard to check that $\tau$ is also invariant on a stable curve under the action of its automorphism group.

Let $\tau'_\t$ denote the pushforward of the Narasimhan-Simha measure under the map $S_\t \to S_\t/\Aut(S_\t)$. 

The following corollary is equivalent to Theorem \ref{mainThmC}
\begin{cor}
  The map $D/G \to (\mathcal{C}_c(S/G))^\vee$ given by sending $[\t] \mapsto \tau'_{\t}$ is well defined and continuous, where $(\mathcal{C}_c(S/G))^\vee$ is the space of Radon measures on $S/G$ equipped with the weak$^*$ topology. 
\end{cor}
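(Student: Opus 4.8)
The plan is to deduce this global statement from the local statement, Corollary \ref{corConvergenceKuranishi}, together with a patching argument using the étale-local structure of $\Mgbar$ and $\Cgbar$ via Kuranishi families.

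First I would address well-definedness of the map $[\t] \mapsto \tau'_\t$. Since $S \to D$ is also a Kuranishi family for each $S_\t$ (cited as \cite[Corollary XI.4.9]{ACG11}), the stabilizer of $\t$ under the $G$-action is $\Aut(S_\t)$, and any two points $\t,\t'$ in the same $G$-orbit are related by an element of $G$ which induces a biholomorphism $S_\t \xrightarrow{\sim} S_{\t'}$; as already observed in the excerpt, the induced map $S_\t/\Aut(S_\t) \xrightarrow{\sim} S_{\t'}/\Aut(S_{\t'})$ is canonical (independent of which element of $G$ we choose, because the ambiguity is precisely an automorphism of $S_\t$). Because $\tau_\t$ is $\Aut(S_\t)$-invariant (this is noted in the excerpt, extending the smooth-curve statement from Section \ref{subsecPluriBergman}), its pushforward $\tau'_\t$ to $S_\t/\Aut(S_\t)$ is well defined and is transported correctly under this canonical identification. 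Hence $[\t] \mapsto \tau'_\t$ is a well-defined map $D/G \to (\mathcal{C}_c(S/G))^\vee$, and also $G$-equivariance shows $(\text{pr})_*\tau_\t = \tau'_{[\t]}$ where $\text{pr}\colon S \to S/G$ is the quotient map.

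Next I would prove continuity. It suffices to check sequential continuity at an arbitrary point $[\t_0] \in D/G$: given $[\t_k] \to [\t_0]$ and a continuous compactly supported $f$ on $S/G$, show $\int f\,\tau'_{[\t_k]} \to \int f\,\tau'_{[\t_0]}$. Lift: pull $f$ back to a $G$-invariant continuous compactly supported function $\tilde f = f \circ \text{pr}$ on $S$, and lift $\t_k$ to a sequence in $D$ converging to $\t_0$ (possible after passing to a subsequence, since $D \to D/G$ is a proper quotient by a finite group; and it is enough to handle subsequences because the limit we are aiming for does not depend on the lift). Then $\int_{S/G} f\,\tau'_{[\t_k]} = \int_{S_{\t_k}} \tilde f\,\tau_{\t_k}$ by definition of pushforward and $G$-invariance, and similarly for $\t_0$. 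Now apply Corollary \ref{corConvergenceKuranishi} to the Kuranishi family $S \to D$ and the function $\tilde f$ to conclude $\int_{S_{\t_k}} \tilde f\,\tau_{\t_k} \to \int_{S_{\t_0}} \tilde f\,\tau_{\t_0}$. This gives $\int f\,\tau'_{[\t_k]} \to \int f\,\tau'_{[\t_0]}$, as desired. Finally, I would remark that since the charts $D/G \hookrightarrow \Mgbar$ (with $S/G \hookrightarrow \Cgbar$) as $S_\0$ ranges over stable curves cover $\Mgbar$, and the construction of $\tau'$ is manifestly intrinsic to the curve (independent of the chart), these local continuous maps glue to the global continuous map $\Mgbar \to (\mathcal{C}^0(\Cgbar))^\vee$ of Theorem \ref{mainThmC}; a continuous function on $\Cgbar$ restricts to a continuous compactly supported function on each $S/G$ (using that $\Cgbar$ is proper over $\Mgbar$ so fibers are compact), so testing against such functions locally suffices.

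The main obstacle I expect is purely bookkeeping around the finite-group quotients: making sure the pushforward-to-quotient operation is genuinely canonical along a $G$-orbit (so that the map out of $D/G$ is well defined), and that lifting test functions and sequences through $S \to S/G$ and $D \to D/G$ interacts correctly with the measures — i.e. that $\int_{S/G} f \, (\text{pr})_*\nu = \int_S (f\circ \text{pr})\,\nu$ for $G$-invariant $\nu$, and that the $G$-invariance of $\tau_\t$ (on possibly singular $S_\t$) is genuinely available. None of this is deep, but it requires care. The analytic heart — the actual convergence of measures — is entirely contained in Corollary \ref{corConvergenceKuranishi}, so no new estimates are needed here.
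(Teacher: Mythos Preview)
Your proposal is correct and takes essentially the same approach as the paper: use Corollary \ref{corConvergenceKuranishi} to get continuity of $\t \mapsto \tau_\t$ on $D$, and then use the $G$-action (via the biholomorphisms $S_\t \xrightarrow{\sim} S_{\t'}$ it induces) to check that the pushforward to $S/G$ is $G$-invariant, so the map descends to $D/G$. Your write-up is somewhat more explicit than the paper's about lifting sequences and test functions through the finite quotient $D \to D/G$, and about the final gluing over $\Mgbar$, but the substance is identical.
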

\begin{proof}
  From Lemma \ref{corConvergenceKuranishi}, it follows that the map $D \to (\mathcal{C}_c(S))^\vee$ given by $\t \mapsto \tau_\t$ is continuous. It is enough to show that the composition $D \to (\mathcal{C}_c(S))^\vee \to (\mathcal{C}_c(S/G))^\vee$ is invariant under the action of $G$ on $D$ i.e.~we need to show that if $g \cdot \t = \t'$ for $g \in G$ and $\t,\t' \in D$, then the pushforward of $\tau'_\t$ is the same as $\tau'_{\t'}$ under the canonical identification $S_{\t}/\Aut(S_\t) \xrightarrow{\sim} S_{\t'}/\Aut(S_{\t'})$.

  Consider the diagram
  $$
  \begin{tikzcd}
    S_\t \ar[r,"g"] \ar[d] &
    S_{\t'} \ar[d]\\
    S_{\t}/\Aut(S_\t) \ar[r,"\sim"] &
    S_{\t'}/\Aut(S_{\t'})
  \end{tikzcd}
  $$

 Since $g$ induces a biholomorphism between $S_\t$ and $S_{\t'}$, it follows that the pushforward of $\tau_\t$ under $g$ is the same as $\tau_{\t'}$ and thus we get that the pushforward of $\tau'_{t}$ to $S_{\t'}/\Aut(S_{\t'})$ is the same the same as $\tau'_{\t'}$
\end{proof}

\bibliography{biblio}
\bibliographystyle{halpha.bst}
\end{document}